\numberwithin{equation}{section}
\newtheorem{Thm}{Theorem}[section]
\newtheorem{Lem}[Thm]{Lemma}
\newtheorem{Prop}[Thm]{Proposition}
\theoremstyle{definition}
\newtheorem{Eg}[Thm]{Example}
\newtheorem{Def}[Thm]{Definition}
\newtheorem{Rmk}[Thm]{Remark}
\DeclareFontFamily{U}{matha}{\hyphenchar\font45}
\DeclareFontShape{U}{matha}{m}{n}{
 <5> <6> <7> <8> <9> <10> gen * matha
 <10.95> matha10 <12> <14.4> <17.28> <20.74> <24.88> matha12
 }{}
\DeclareSymbolFont{matha}{U}{matha}{m}{n}
\DeclareFontFamily{U}{mathx}{\hyphenchar\font45}
\DeclareFontShape{U}{mathx}{m}{n}{
 <5> <6> <7> <8> <9> <10>
 <10.95> <12> <14.4> <17.28> <20.74> <24.88>
 mathx10
 }{}
\DeclareSymbolFont{mathx}{U}{mathx}{m}{n}
\DeclareMathDelimiter{\VERT}{0}{matha}{"7E}{mathx}{"17}
\newcommand{\C}{\mathbb{C}}
\newcommand{\D}{\mathsf{D}}
\newcommand{\e}[1]{\mathbf{e}_{#1}}
\newcommand{\I}{\mathsf{I}}
\newcommand{\M}{\mathsf{M}}
\newcommand{\N}{\mathbb{N}}
\newcommand{\R}{\mathbb{R}}
\newcommand{\T}{\mathbb{T}}
\newcommand{\X}{\mathsf{X}}
\newcommand{\Z}{\mathbb{Z}}
\newcommand{\ad}[1]{\operatorname{ad}_{#1}}
\newcommand{\bn}{\mathsf{bn}}
\newcommand{\Br}[1]{\( #1 \)}
\newcommand{\CC}[2]{\SqBr{#1,#2}}
\newcommand{\Cl}[2]{\overline{#1}^{#2}}
\newcommand{\CO}[2]{\[ #1,#2 \)}
\newcommand{\df}{\stackrel{\operatorname{df}}{=}}
\newcommand{\DM}[1]{\D_{#1,\M}}
\newcommand{\dn}{\mathsf{dn}}
\newcommand{\Id}{\operatorname{Id}}
\newcommand{\mk}{\mathsf{mk}}
\newcommand{\sa}[1]{#1_{\operatorname{sa}}}
\newcommand{\abs}[1]{\mleft| #1 \mright|}
\newcommand{\Adj}[2]{\func{\mathbb{L}}{#1,#2}}
\newcommand{\CcS}[1]{\func{C_{c}^{\infty}}{#1}}
\newcommand{\Der}[1]{\mathcal{D}_{#1}}
\newcommand{\Dom}[1]{\func{\operatorname{Dom}}{#1}}
\newcommand{\Inn}{\Inner{\cdot}{\cdot}}
\newcommand{\Int}[4]{\int_{#1} #2 \, \mathrm{d}{\func{#3}{#4}}}
\newcommand{\Seq}[2]{\Br{#1}_{#2}}
\newcommand{\Set}[2]{\mleft\{ \left. #1 \right| #2 \mright\}}
\newcommand{\ska}[1]{#1_{\operatorname{ska}}}
\newcommand{\DArg}[2]{\func{\D_{#1}}{#2}}
\newcommand{\func}[2]{#1 \Br{#2}}
\newcommand{\Haus}[3]{\func{\mathsf{Haus}_{#1}}{#2,#3}}
\newcommand{\Norm}[1]{\mleft\| #1 \mright\|}
\newcommand{\Pair}[2]{\Br{#1,#2}}
\newcommand{\Quot}[2]{#1 / #2}
\newcommand{\SqBr}[1]{\[ #1 \]}
\newcommand{\SSet}[1]{\mleft\{ #1 \mright\}}
\newcommand{\Trip}[3]{\Br{#1,#2,#3}}
\newcommand{\bnArg}[3]{\func{\bn_{#1}}{#2,#3}}
\newcommand{\CoDom}[1]{\func{\operatorname{Co-Dom}}{#1}}
\newcommand{\DMArg}[2]{\func{\DM{#1}}{#2}}
\newcommand{\dnArg}[3]{\func{\dn_{#1}}{#2,#3}}
\newcommand{\Inner}[2]{\left\langle \left. #1 \right| #2 \right\rangle}
\newcommand{\InnerR}[2]{\left\langle #1 \left| #2 \right. \right\rangle}
\newcommand{\Range}[1]{\func{\operatorname{Range}}{#1}}
\newcommand{\State}[1]{\func{\mathscr{S}}{#1}}
\newcommand{\OpNorm}[1]{\mleft\VERT #1 \mright\VERT}
\newcommand{\Smooth}[1]{#1^{\operatorname{\infty}}}
\newcommand{\ModProp}[5]{\func{\Lambda^{\mod}_{#1,#2,#3}}{#4,#5}}
\renewcommand{\(}{\mleft(}
\renewcommand{\)}{\mright)}
\renewcommand{\[}{\mleft[}
\renewcommand{\]}{\mright]}
\renewcommand{\d}{\displaystyle}
\renewcommand{\k}{\mathsf{k}}
\renewcommand{\L}{\mathsf{L}}
\renewcommand{\st}{\operatorname{st}}
\renewcommand{\ker}[1]{\func{\operatorname{ker}}{#1}}
\renewcommand{\mod}{\mathsf{mod}}
\renewcommand{\alph}[2]{\func{\alpha_{#1}}{#2}}
\begin{document}

\newcommand{\arXivNumber}{1803.04036}

\renewcommand{\PaperNumber}{079}

\FirstPageHeading

\ShortArticleName{Metrized Quantum Vector Bundles over Quantum Tori}
\ArticleName{Metrized Quantum Vector Bundles\\ over Quantum Tori Built from Riemannian Metrics\\ and Rosenberg's Levi-Civita Connections}

\Author{Leonard HUANG}
\AuthorNameForHeading{L.~Huang}

\Address{Department of Mathematics, University of Colorado at Boulder,\\ Campus Box 395, 2300 Colorado Avenue, Boulder, CO 80309-0395, USA}

\Email{\href{mailto:Leonard.Huang@Colorado.EDU}{Leonard.Huang@Colorado.EDU}}

\ArticleDates{Received March 13, 2018, in final form July 21, 2018; Published online July 29, 2018}

\Abstract{We build metrized quantum vector bundles, over a generically transcendental quantum torus, from Riemannian metrics, using Rosenberg's Levi-Civita connections for these metrics. We also prove that two metrized quantum vector bundles, corresponding to positive scalar multiples of a Riemannian metric, have distance zero between them with respect to the modular Gromov--Hausdorff propinquity.}

\Keywords{quantum torus; generically transcendental; quantum metric space; metrized quantum vector bundle; Riemannian metric; Levi-Civita connection}

\Classification{46L08; 46L57; 46L87; 37A55; 58B34}

\section{Introduction}

This paper was inspired by an apparent connection between Jonathan Rosenberg's work on Riemannian metrics on a generically transcendental quantum torus and Levi-Civita connections for these metrics~\cite{Ro}, and Fr\'ed\'eric Latr\'emoli\`ere's work on metrized quantum vector bundles and the modular Gromov--Hausdorff propinquity~\cite{L4}.

The subject of the Gromov--Hausdorff propinquity has its origin in Marc Rieffel's observation~\cite{Ri3} that in certain papers on theoretical physics, statements can be found regarding the convergence of a sequence of operator algebras to an operator algebra. He deduced that the bookkeeping device used by the authors of these papers to prove convergence can be described as a metric structure on unital $ C^{\ast} $-algebras. Seeing that the Gromov--Hausdorff distance for compact metric spaces enables us to discuss the convergence of a sequence of compact metric spaces to a compact metric space, he defined for the class of order-unit spaces endowed with a special metric structure (called the \emph{compact quantum metric spaces}) an analogous distance called the \emph{quantum Gromov--Hausdorff distance}. The relation between order-unit spaces and $ C^{\ast} $-algebras is made clear when one knows that the space of self-adjoint elements of a unital $ C^{\ast} $-algebra is an order-unit space.

The quantum Gromov--Hausdorff distance suffers from some deficiencies. Designed for order-unit spaces, it does not incorporate the multiplicative structure of a $ C^{\ast} $-algebra. Also, it was unknown if distance zero between two $ C^{\ast} $-algebras necessarily means that they are $ \ast $-isomorphic. These problems were settled when Latr\'emoli\`ere defined, in~\cite{L3}, the \emph{quantum Gromov--Hausdorff propinquity} for the class of unital $ C^{\ast} $-algebras endowed with a special metric structure (called the \emph{quantum compact metric spaces}).

Recently, Latr\'emoli\`ere was able to generalize the Gromov--Hausdorff propinquity to Hilbert $ C^{\ast} $-modules over a quantum compact metric space endowed with a special metric structure~\cite{L4}. He calls these objects \emph{metrized quantum vector bundles}, regarding them to be a noncommutative generalization of vector bundles over a~compact Riemannian manifold endowed with a~metric. The \emph{modular Gromov--Hausdorff propinquity} then allows us to formalize the concept of convergence for metrized quantum vector bundles. Distance zero between metrized quantum vector bundles is equivalent to the existence of an isomorphism between them, in terms of their Hilbert-$ C^{\ast} $-module structures and their metric structures.

In Section~\ref{section3}, we will define Riemannian metrics on generically transcendental quantum tori and provide a brief overview of Rosenberg's work on Levi-Civita connections for these metrics.

In Section~\ref{section4}, we will define Latr\'emoli\`ere's quantum compact metric spaces and metrized quantum vector bundles. We will then show how to build a metrized quantum vector bundle from a Riemannian metric on a generically transcendental quantum torus and the Levi-Civita connection for the metric.

In Section~\ref{section5}, we will prove that two metrized quantum vector bundles, corresponding to positive scalar multiples of a Riemannian metric, have distance zero between them with respect to the modular Gromov--Hausdorff propinquity.

In Section~\ref{section6}, we will pose some open questions that serve as the basis for future work in this area.

\section{Preliminaries}\label{section2}
This section serves to standardize notation and conventions.

Let $ \N $ denote the set of positive integers, and for each $ m \in \N $, let $ \SqBr{m} \df \N_{\leq m} $.

Throughout this paper, fix $ n \in \N $ as well as a generically transcendental\footnote{This is a rather complicated Diophantine condition that is defined in~\cite{BEJ}.} skew-adjoint $ \Br{n \times n} $-matrix $ \Theta $ having entries in $ \C $.

Fix also an arbitrary norm $ N $ on $ \R^{n} $. Of particular physical importance is the Euclidean norm on $ \R^{n} $.

Let $ A_{\Theta} $ denote the $ n $-dimensional quantum torus corresponding to $ \Theta $, which is the universal $ C^{\ast} $-algebra generated by $ n $ unitary elements $ u_{1},\ldots,u_{n} $ satisfying the relation $ u_{k} u_{j} = e^{2 \pi i \Theta_{jk}} u_{j} u_{k} $ for all $ j,k \in \SqBr{n} $.

Let $ \alpha $ denote the canonical action of $ \T^{n} $ on $ A_{\Theta} $ given by $ \alph{\bm{t}}{u_{j}} = t_{j} \cdot u_{j} $ for all $ \bm{t} \in \T^{n} $ and $ j \in \SqBr{n} $, where $ \bm{t} = \Seq{t_{j}}{j \in \SqBr{n}} $. Then let $ \partial_{1},\ldots,\partial_{n} $ denote the coordinate directional-derivative operators associated to $ \alpha $.

Let $ \Smooth{A_{\Theta}} $ denote the $ \ast $-subalgebra of smooth elements of $ A_{\Theta} $ for $ \alpha $, which we refer to as the $ n $-dimensional \emph{smooth quantum torus} corresponding to $ \Theta $. Then let $ \Der{\Theta} $ denote the $ \R $-vector space of $ \ast $-derivations on $ \Smooth{A_{\Theta}} $.

Given a vector space $ V $, a seminorm $ L $ on $ V $, and $ r \in \R_{> 0} $, let $ L^{r} \df \Set{v \in V}{\func{L}{v} \leq r} $.

Given a $ \ast $-algebra $ A $, let its set of self-adjoint elements and its set of skew-adjoint elements be denoted by $ \sa{A} $ and $ \ska{A} $ respectively, and the $ \ast $-algebra of $ \Br{n \times n} $-matrices with entries in~$ A $ be denoted by~$ \func{M_{n}}{A} $.

Given a $ C^{\ast} $-algebra $ A $, let its state space be denoted by $ \State{A} $.

\section[Generically transcendental quantum tori as noncommutative Riemannian manifolds]{Generically transcendental quantum tori\\ as noncommutative Riemannian manifolds}\label{section3}

This section gives a brief overview of Riemannian metrics on a generically transcendental quantum torus and Rosenberg's Levi-Civita connections for these metrics, as defined in \cite{Ro}.

Throughout this section, $ A $ denotes a unital $ C^{\ast} $-algebra.

\begin{Def}Let $ \func{\chi}{A} $ denote the free left $ A $-module with rank $ n $ whose underlying $ \C $-vector space is $ A^{n} $, where the left action $ \bullet $ of $ a \in A $ on $ X \in \func{\chi}{A} $ is given by left multiplication of each component of $ X $ by $ a $. For every $ j \in \SqBr{n} $, let $ \e{j} $ denote the element of $ \func{\chi}{A} $ that has $ 1_{A} $ in the $ j $-th component and $ 0_{A} $'s elsewhere. Define the standard $ A $-valued inner product $ \Inn_{\st} $ and its associated (standard) norm $ \Norm{\cdot}_{\st} $ on $ \func{\chi}{A} $ by
\begin{align*}
\forall\, a_{1},\ldots,a_{n},b_{1},\ldots,b_{n} \in A\colon \quad & \Inner{\sum_{j = 1}^{n} a_{j} \bullet \e{j}}{\sum_{j = 1}^{n} b_{j} \bullet \e{j}}_{\st} \df \sum_{j = 1}^{n} a_{j} b_{j}^{\ast}, \\
& \Norm{\sum_{j = 1}^{n} a_{j} \bullet \e{j}}_{\st} \df \Norm{\sum_{j = 1}^{n} a_{j} a_{j}^{\ast}}_{A}^{\frac{1}{2}}.
\end{align*}
Then $ \Pair{\func{\chi}{A}}{\Inn_{\st}} $ is a left Hilbert $ A $-module.

Let $ \Adj{\func{\chi}{A}}{\Inn_{\st}} $ denote the $ C^{\ast} $-algebra of adjointable maps on $ \Pair{\func{\chi}{A}}{\Inn_{\st}} $. Define a~unital algebraic $ \ast $-anti-isomorphism $ T\colon \func{M_{n}}{A} \to \Adj{\func{\chi}{A}}{\Inn_{\st}} $ by
\begin{gather*}
\forall\, a_{1},\ldots,a_{n} \in A\colon \quad \func{T_{g}}{\sum_{j = 1}^{n} a_{j} \bullet \e{j}} \df \sum_{k = 1}^{n} \Br{\sum_{j = 1}^{n} a_{j} g_{j k}} \bullet \e{k}.
\end{gather*}
We can define a $ C^{\ast} $-algebraic norm $ \Norm{\cdot}_{\func{M_{n}}{A}} $ on $ \func{M_{n}}{A} $ by $ \Norm{g}_{\func{M_{n}}{A}} \df \Norm{T_{g}}_{\Adj{\func{\chi}{A}}{\Inn_{\st}}} $ for all $ g \in \func{M_{n}}{A} $. Henceforth, we will view $ \func{M_{n}}{A} $ as a $ C^{\ast} $-algebra.
\end{Def}

\begin{Prop} \label{Properties of a Riemannian Metric} Let $ g \!\in\! \func{M_{n}}{A} $ be positive and invertible. Define a map \smash{$ \Inn_{g}\!\colon \! \func{\chi}{A} \!\times\! \func{\chi}{A} \!\to\! A\!$} by
\begin{gather*}
\forall\, X,Y \in \func{\chi}{A}\colon \quad \Inner{X}{Y}_{g} \df \Inner{\func{T_{g}}{X}}{Y}_{\st}.
\end{gather*}
Then the following statements hold:
\begin{enumerate}\itemsep=0pt
\item[$1)$] $ \big(\func{\chi}{A},\Inn_{g}\big) $ is a left Hilbert $ A $-module $($we denote the associated norm on $ \func{\chi}{A} $ by $ \Norm{\cdot}_{g})$,
\item[$2)$] $ \Norm{X}_{g} = \big\|\func{T_{\sqrt{g}}}{X}\big\|_{\st} $ for all $ X \in \func{\chi}{A} $,
\item[$3)$] $ \Inner{\e{j}}{\e{k}}_{g} = g_{j k} $ for all $ j,k \in \SqBr{n} $.
\end{enumerate}
\end{Prop}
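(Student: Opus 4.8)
The plan is to base everything on one structural observation and then transcribe the Hilbert-module axioms through a single operator. Since $ T $ is a unital $ \ast $-anti-isomorphism and $ g $ is positive and invertible, its positive square root $ \sqrt{g} $ exists in the $ C^{\ast} $-algebra $ \func{M_{n}}{A} $, is self-adjoint, and is invertible with inverse $ \Br{\sqrt{g}}^{-1} $. I would first record the resulting facts about $ T_{\sqrt{g}} \in \Adj{\func{\chi}{A}}{\Inn_{\st}} $: the anti-homomorphism property applied to $ g = \sqrt{g}\,\sqrt{g} $ gives $ T_{g} = T_{\sqrt{g}} T_{\sqrt{g}} $; the $ \ast $-property gives $ T_{\sqrt{g}}^{\ast} = T_{\Br{\sqrt{g}}^{\ast}} = T_{\sqrt{g}} $, so $ T_{\sqrt{g}} $ is self-adjoint; and $ T_{\Br{\sqrt{g}}^{-1}} $ is a two-sided inverse of $ T_{\sqrt{g}} $ because $ T_{\sqrt{g} \Br{\sqrt{g}}^{-1}} = T_{\Br{\sqrt{g}}^{-1}} T_{\sqrt{g}} = \func{T_{I_{n}}}{\cdot} = \Id $. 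In particular $ T_{g} = T_{\sqrt{g}}^{\ast} T_{\sqrt{g}} $ is positive and $ T_{\sqrt{g}} $ is invertible.

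I would dispatch statement $ 3) $ immediately, as it needs none of the above. Applying the defining formula for $ T $ to $ \e{j} = 1_{A} \bullet \e{j} $ yields $ \func{T_{g}}{\e{j}} = \sum_{k} g_{j k} \bullet \e{k} $, whence $ \Inner{\e{j}}{\e{k}}_{g} = \Inner{\func{T_{g}}{\e{j}}}{\e{k}}_{\st} = g_{j k} $ directly from the definition of $ \Inn_{\st} $.

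For statement $ 1) $, the sesquilinearity and Hermitian axioms follow from those of $ \Inn_{\st} $ together with the operator properties of $ T_{g} $: left $ A $-linearity in the first slot follows from $ A $-linearity of $ T_{g} $ and of $ \Inn_{\st} $, and the identity $ \Inner{X}{Y}_{g}^{\ast} = \Inner{Y}{X}_{g} $ follows by moving $ T_{g} $ across $ \Inn_{\st} $ using its self-adjointness. The crux is positive-definiteness, where I would compute $ \Inner{X}{X}_{g} = \Inner{\func{T_{g}}{X}}{X}_{\st} = \Inner{\func{T_{\sqrt{g}}}{X}}{\func{T_{\sqrt{g}}}{X}}_{\st} \geq 0 $, the middle equality using $ T_{g} = T_{\sqrt{g}} T_{\sqrt{g}} $ and the self-adjointness of $ T_{\sqrt{g}} $; definiteness follows since $ \Inner{X}{X}_{g} = 0 $ forces $ \func{T_{\sqrt{g}}}{X} = 0 $, and invertibility of $ T_{\sqrt{g}} $ then forces $ X = 0 $. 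This same computation gives statement $ 2) $: unwinding the standard norm, $ \Norm{X}_{g} = \Norm{\Inner{\func{T_{\sqrt{g}}}{X}}{\func{T_{\sqrt{g}}}{X}}_{\st}}_{A}^{1/2} = \Norm{\func{T_{\sqrt{g}}}{X}}_{\st} $.

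It remains to establish completeness of $ \big(\func{\chi}{A},\Inn_{g}\big) $, for which I would use statement $ 2) $ to show $ \Norm{\cdot}_{g} $ and $ \Norm{\cdot}_{\st} $ are equivalent: $ \Norm{X}_{g} = \Norm{\func{T_{\sqrt{g}}}{X}}_{\st} \leq \OpNorm{T_{\sqrt{g}}} \Norm{X}_{\st} $, while $ \Norm{X}_{\st} = \Norm{\func{T_{\Br{\sqrt{g}}^{-1}}}{\func{T_{\sqrt{g}}}{X}}}_{\st} \leq \OpNorm{T_{\Br{\sqrt{g}}^{-1}}} \Norm{X}_{g} $; since $ \big(\func{\chi}{A},\Inn_{\st}\big) $ is already a Hilbert module and hence complete, the equivalent norm $ \Norm{\cdot}_{g} $ is complete too. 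I expect the only genuine obstacle to be bookkeeping around the conventions: making sure the order reversal in $ T_{gh} = T_{h} T_{g} $ and the adjoint relation $ \Inner{\func{T_{g}}{X}}{Y}_{\st} = \Inner{X}{\func{T_{g}}{Y}}_{\st} $ are applied consistently for the \emph{left}-module inner product. Everything else is a direct transcription of the Hilbert-module axioms through $ T_{\sqrt{g}} $.
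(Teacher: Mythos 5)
Your proposal is correct and takes essentially the same approach as the paper: both reduce all Hilbert-module axioms to those of $ \Inn_{\st} $ via the factorization $ T_{g} = T_{\sqrt{g}} T_{\sqrt{g}} $ together with the self-adjointness and invertibility of $ T_{\sqrt{g}} $, obtaining positivity, definiteness, and statement (2) from $ \Inner{X}{X}_{g} = \Inner{\func{T_{\sqrt{g}}}{X}}{\func{T_{\sqrt{g}}}{X}}_{\st} $, and completeness from that of the standard module. The only cosmetic difference is that you phrase the completeness step as norm equivalence, whereas the paper runs a Cauchy sequence through the isometry $ T_{\sqrt{g}} $ and pulls the limit back by $ T_{\sqrt{g}}^{-1} $; these rest on the same facts.
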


\begin{proof}By hypothesis, $ g $ has an invertible positive square root in $ \func{M_{n}}{A} $, so
\begin{gather*}
\forall\, X,Y \in \func{\chi}{A}\colon \quad
 \Inner{X}{Y}_{g}
= \Inner{\func{T_{g}}{X}}{Y}_{\st}
= \big\langle{\func{T_{\sqrt{g}}}{X}}\,|\,{\func{T_{\sqrt{g}}^{\ast}}{Y}}\big\rangle_{\st}
= \big\langle{\func{T_{\sqrt{g}}}{X}}\,|\,{\func{T_{\sqrt{g}}}{Y}}\big\rangle_{\st}.
\end{gather*}
We can thus see that $ \Inn_{g} $ is a sesquilinear form. Furthermore,
\begin{gather*}
\forall\, X,Y \in \func{\chi}{A}\colon \quad
 \Inner{Y}{X}_{g}
= \big\langle {\func{T_{\sqrt{g}}}{Y}}\,|\,{\func{T_{\sqrt{g}}}{X}}\big\rangle_{\st}
= \big\langle{\func{T_{\sqrt{g}}}{X}}\,|\,{\func{T_{\sqrt{g}}}{Y}}\big\rangle_{\st}^{\ast}
= \Inner{X}{Y}_{g}^{\ast}.
\end{gather*}
As $ T_{\sqrt{g}} $ is $ A $-linear, we find that
\begin{align*}
\forall\, a \in A, ~ \forall\, X,Y \in \func{\chi}{A}\colon \quad
 \Inner{a \bullet X}{Y}_{g}
& = \big\langle{\func{T_{\sqrt{g}}}{a \bullet X}}\,|\,{\func{T_{\sqrt{g}}}{Y}}\big\rangle_{\st} \\
& = \big\langle{a \bullet \func{T_{\sqrt{g}}}{X}}\,|\,{\func{T_{\sqrt{g}}}{Y}}\big\rangle_{\st} \\
& = a \big\langle {\func{T_{\sqrt{g}}}{X}}\,|\,{\func{T_{\sqrt{g}}}{Y}}\big\rangle_{\st} \\
& = a \Inner{X}{Y}_{g}.
\end{align*}
Next, we have for all $ X \in \func{\chi}{A} $ that
\begin{gather*}
 \Inner{X}{X}_{g}= \big\langle{\func{T_{\sqrt{g}}}{X}}\,|\,{\func{T_{\sqrt{g}}}{X}}\big\rangle_{\st}
\geq_{A} 0_{A},
\end{gather*}
and if $ \Inner{X}{X}_{g} = 0_{A} $, then $ \big\langle{\func{T_{\sqrt{g}}}{X}}\,|\,{\func{T_{\sqrt{g}}}{X}}\big\rangle_{\st} = 0_{A} $, so $ \func{T_{\sqrt{g}}}{X} = 0_{\func{\chi}{A}} $ and thus $ X = 0_{\func{\chi}{A}} $. Continuing,
\begin{gather*}
\forall\, X \in \func{\chi}{A}\colon \quad \Norm{X}_{g}
= \big\|\Inner{X}{X}_{g}\big\|_{A}^{\frac{1}{2}}
= \big\|\big\langle{\func{T_{\sqrt{g}}}{X}}\,|\,{\func{T_{\sqrt{g}}}{X}}\big\rangle_{\st}\big|_{A}^{\frac{1}{2}}
= \big\|\func{T_{\sqrt{g}}}{X}\big\|_{\st},
\end{gather*}
so if $ \Seq{X_{k}}{k \in \N} $ is a Cauchy sequence in $ \func{\chi}{A} $ with respect to $ \Norm{\cdot}_{g} $, then $ \big({\func{T_{\sqrt{g}}}{X_{k}}}\big)_{k \in \N} $ is a Cauchy sequence in $ \func{\chi}{A} $ with respect to $ \Norm{\cdot}_{\st} $ that has a $ \Norm{\cdot}_{\st} $-limit $ X' $, as $ \Pair{\func{\chi}{A}}{\Norm{\cdot}_{\st}} $ is a complete metric space. Hence,
\begin{gather*}
 \lim_{k \to \infty} \big\|{X_{k} - \func{T_{\sqrt{g}}^{- 1}}{X'}}\big\|_{g}
= \lim_{k \to \infty} \big\| T_{\sqrt{g}}\big(X_{k} - \func{T_{\sqrt{g}}^{- 1}}{X'}\big) \big\|_{\st}
= \lim_{k \to \infty} \big\|{\func{T_{\sqrt{g}}}{X_{k}} - X'}\big\|_{\st}
= 0.
\end{gather*}
Therefore, $ \big({\func{\chi}{A}},{\Norm{\cdot}_{g}}\big) $ is a complete metric space, which establishes (1) and (2).

Finally, (3) follows from the definition of $ \Inn_{g} $.
\end{proof}

The following lemma will be needed in Section~\ref{section4}.

\begin{Lem} \label{Norms Associated to Different Matrices Are Equivalent}
Let $ g,h \in \func{M_{n}}{A} $ be positive and invertible. Then
\begin{gather*}
 \frac{1}{\big\|\sqrt{h} \sqrt{g^{- 1}}\big\|_{\func{M_{n}}{A}}} \Norm{\cdot}_{h}
\leq \Norm{\cdot}_{g}
\leq \big\|\sqrt{g} \sqrt{h^{- 1}}\big\|_{\func{M_{n}}{A}} \Norm{\cdot}_{h},
\end{gather*}
so $ \Norm{\cdot}_{g} $ and $ \Norm{\cdot}_{h} $ are equivalent norms on $ \func{\chi}{A} $. In particular,
\begin{gather*}
 \frac{1}{\big\|\sqrt{h}\big\|_{\func{M_{n}}{A}}} \Norm{\cdot}_{h}
\leq \Norm{\cdot}_{\st} \leq \big\|\sqrt{h^{- 1}}\big\|_{\func{M_{n}}{A}} \Norm{\cdot}_{h}.
\end{gather*}
\end{Lem}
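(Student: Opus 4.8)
The plan is to reduce the entire statement to part~(2) of Proposition~\ref{Properties of a Riemannian Metric}, which gives $ \Norm{X}_{g} = \Norm{\func{T_{\sqrt{g}}}{X}}_{\st} $ and $ \Norm{X}_{h} = \Norm{\func{T_{\sqrt{h}}}{X}}_{\st} $, combined with the fact that $ T $ is a unital $ \ast $-anti-isomorphism. Since $ g $ and $ h $ are positive and invertible, their square roots are self-adjoint and invertible in $ \func{M_{n}}{A} $, with $ \sqrt{g^{-1}} = \Br{\sqrt{g}}^{-1} $ and similarly for $ h $. The key observation is the factorization $ \sqrt{g} = \sqrt{h} \cdot \Br{\sqrt{h^{-1}} \sqrt{g}} $, which by the anti-multiplicativity $ T_{ab} = T_{b} \circ T_{a} $ yields $ T_{\sqrt{g}} = T_{\sqrt{h^{-1}} \sqrt{g}} \circ T_{\sqrt{h}} $.

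First I would establish the upper bound. Applying this factorization to an arbitrary $ X \in \func{\chi}{A} $ and using submultiplicativity of the operator norm on $ \Adj{\func{\chi}{A}}{\Inn_{\st}} $,
\begin{align*}
\Norm{X}_{g} &= \Norm{\func{T_{\sqrt{g}}}{X}}_{\st} = \Norm{\func{T_{\sqrt{h^{-1}} \sqrt{g}}}{\func{T_{\sqrt{h}}}{X}}}_{\st} \\
&\leq \Norm{T_{\sqrt{h^{-1}} \sqrt{g}}}_{\Adj{\func{\chi}{A}}{\Inn_{\st}}} \Norm{\func{T_{\sqrt{h}}}{X}}_{\st}.
\end{align*}
By the definition of the norm on $ \func{M_{n}}{A} $ the first factor equals $ \Norm{\sqrt{h^{-1}} \sqrt{g}}_{\func{M_{n}}{A}} $, and by part~(2) the second factor equals $ \Norm{X}_{h} $. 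It then remains only to replace $ \Norm{\sqrt{h^{-1}} \sqrt{g}}_{\func{M_{n}}{A}} $ by $ \Norm{\sqrt{g} \sqrt{h^{-1}}}_{\func{M_{n}}{A}} $: since $ \func{M_{n}}{A} $ carries a $ C^{\ast} $-norm the norm is $ \ast $-invariant, and $ \Br{\sqrt{h^{-1}} \sqrt{g}}^{\ast} = \sqrt{g} \sqrt{h^{-1}} $ because $ \sqrt{g} $ and $ \sqrt{h^{-1}} $ are self-adjoint. This gives $ \Norm{X}_{g} \leq \Norm{\sqrt{g} \sqrt{h^{-1}}}_{\func{M_{n}}{A}} \Norm{X}_{h} $.

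The lower bound I would obtain by symmetry: interchanging the roles of $ g $ and $ h $ in the argument above produces $ \Norm{X}_{h} \leq \Norm{\sqrt{h} \sqrt{g^{-1}}}_{\func{M_{n}}{A}} \Norm{X}_{g} $, which rearranges to the left-hand inequality, and equivalence of the two norms on $ \func{\chi}{A} $ is then immediate. For the final ``in particular'' assertion I would specialize to $ g = 1_{\func{M_{n}}{A}} $, where $ \sqrt{g} = \sqrt{g^{-1}} = 1_{\func{M_{n}}{A}} $ and $ \func{T_{\sqrt{g}}}{X} = X $, so that $ \Norm{\cdot}_{g} = \Norm{\cdot}_{\st} $ and the two displayed bounds collapse to the stated ones.

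The computation is entirely routine once the factorization is in place; the only point demanding care is the bookkeeping for the anti-multiplicativity of $ T $ (so that $ T_{ab} = T_{b} \circ T_{a} $ rather than $ T_{a} \circ T_{b} $) and the consequent need to pass from $ \sqrt{h^{-1}} \sqrt{g} $ to its adjoint $ \sqrt{g} \sqrt{h^{-1}} $ in order to match the normalization appearing in the statement. I expect no genuine obstacle beyond this.
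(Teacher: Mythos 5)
Your proposal is correct and takes essentially the same route as the paper's proof: both reduce to part~(2) of Proposition~\ref{Properties of a Riemannian Metric}, factor one square root through the other, apply submultiplicativity of the operator norm together with the definition of $\Norm{\cdot}_{\func{M_{n}}{A}}$, obtain the companion inequality by interchanging $g$ and $h$, and get the ``in particular'' clause by specializing to the identity matrix. Your explicit bookkeeping of the anti-multiplicativity $T_{ab} = T_{b} \circ T_{a}$ and the resulting passage to the adjoint is in fact slightly more careful than the paper, which writes $T_{\sqrt{h}} T_{\sqrt{g}}^{-1} = T_{\sqrt{h} \sqrt{g^{-1}}}$ where anti-multiplicativity actually gives $T_{\sqrt{g^{-1}} \sqrt{h}}$ --- a harmless slip, since the two matrices are adjoints of one another and the $C^{\ast}$-norm is $\ast$-invariant, exactly as you note.
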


\begin{proof}Observe that
\begin{align*}
\forall\, X \in \func{\chi}{A}\colon \quad \Norm{X}_{h}
& = \big\|\func{T_{\sqrt{h}}}{X}\big\|_{\st} \quad \Br{\text{by (2) of Proposition~\ref{Properties of a Riemannian Metric}}} \\
& = \big\|\func{T_{\sqrt{h}} T_{\sqrt{g}}^{- 1} T_{\sqrt{g}}}{X}\big\|_{\st} \\
& = \big\| \func{T_{\sqrt{h} \sqrt{g^{- 1}}} T_{\sqrt{g}}}{X}\big\|_{\st} \\
& \leq \big\| T_{\sqrt{h} \sqrt{g^{- 1}}}\big\|_{\Adj{\func{\chi}{A}}{\Inn_{\st}}} \big\|\func{T_{\sqrt{g}}}{X}\big\|_{\st} \\
& = \big\|T_{\sqrt{h} \sqrt{g^{- 1}}}\big\|_{\Adj{\func{\chi}{A}}{\Inn_{\st}}} \Norm{X}_{g} \quad
 \Br{\text{by (2) of Proposition~\ref{Properties of a Riemannian Metric} again}} \\
& = \big\|\sqrt{h} \sqrt{g^{- 1}}\big\|_{\func{M_{n}}{A}} \Norm{X}_{g}.
\end{align*}
Interchanging $ g $ and $ h $ in the relations above, we get $ \Norm{X}_{g} \leq \big\|\sqrt{h} \sqrt{g^{- 1}}\big\|_{\func{M_{n}}{A}} \Norm{X}_{h} $ for all $ X \in \func{\chi}{A} $, so
\begin{gather*}
\frac{1}{\big\|\sqrt{h} \sqrt{g^{- 1}}\big\|_{\func{M_{n}}{A}}} \Norm{\cdot}_{h}
\leq \Norm{\cdot}_{g} \leq \big\|\sqrt{g} \sqrt{h^{- 1}}\big\|_{\func{M_{n}}{A}} \Norm{\cdot}_{h}
\end{gather*}
as required.

Finally, as $ \Norm{\cdot}_{\st} = \Norm{\cdot}_{\I} $, where $ \I $ is the identity of $ \func{M_{n}}{A} $, the second part comes from letting $ g = \I $.
\end{proof}

It was established in \cite{BEJ} that
\begin{gather} \label{Decomposition of *-Derivations into Their Invariant and Inner Parts}
 \Der{\Theta} = \Set{\sum_{j = 1}^{n} r_{j} \cdot \partial_{j} + \ad{a}}{r_{1},\ldots,r_{n} \in \R ~ \text{and} ~ a \in \ska{\Br{\Smooth{A_{\Theta}}}}},
\end{gather}
where the hypothesis that $ \Theta $ is generically transcendental plays a role. For any $ a,b \in A_{\Theta} $, we have $ \ad{a} = \ad{b} $ if and only if $ a - b \in \C \cdot 1_{A_{\Theta}} $, so if $ \tau $ denotes the faithful tracial state on $ A_{\Theta} $, then~(\ref{Decomposition of *-Derivations into Their Invariant and Inner Parts}) can be rewritten as
\begin{gather*}
 \Der{\Theta} = \Set{\sum_{j = 1}^{n} r_{j} \cdot \partial_{j} + \ad{a - \func{\tau}{a} \cdot 1_{A_{\Theta}}}}
 {r_{1},\ldots,r_{n} \in \R ~ \text{and} ~ a \in \ska{\Br{\Smooth{A_{\Theta}}}}}.
\end{gather*}
Now, for every $ a \in \ska{\Br{A_{\Theta}}} $, the following observations can be made:
\begin{itemize}\itemsep=0pt
\item as $ \func{\tau}{1_{A_{\Theta}}} = 1 $, we have $ \func{\tau}{a - \func{\tau}{a} \cdot 1_{A_{\Theta}}} = 0 $,
\item as $ a $ is skew-adjoint, we have $ a = i b $ for some self-adjoint $ b \in A_{\Theta} $, so
\begin{gather*}
\SqBr{a - \func{\tau}{a} \cdot 1_{A_{\Theta}}}^{\ast} = a^{\ast} - \overline{\func{\tau}{a}} \cdot 1_{A_{\Theta}}.
\end{gather*}
Hence, $ a - \func{\tau}{a} \cdot 1_{A_{\Theta}} $ is skew-adjoint as well.
\end{itemize}
By \cite[Theorem 1.4]{Ro}, the decomposition of every element of $ \Der{\Theta} $ into an $ \R $-linear combination of the $ \partial_{j} $'s and an inner $ \ast $-derivation is unique, so the observations above tell us that $ \Der{\Theta} $ can be linearly parametrized by the normed $ \R $-vector space $ \R^{n} \times \SqBr{\ska{\Br{\Smooth{A_{\Theta}}}} \cap \ker{\tau}} $, whose norm $ \Norm{\cdot} $ we have chosen to be defined by
\begin{gather*}
\forall\, r_{1},\ldots,r_{n} \in \R, ~ \forall\, a \in \ska{\Br{\Smooth{A_{\Theta}}}} \cap \ker{\tau}\colon \quad
\Norm{\Br{r_{1},\ldots,r_{n};a}} \df \func{N}{r_{1},\ldots,r_{n}} + \Norm{a}_{A_{\Theta}}.
\end{gather*}
We may then transfer $ \Norm{\cdot} $ to an $ \R $-vector-space norm $ \Norm{\cdot}_{\Der{\Theta}} $ on $ \Der{\Theta} $ in an obvious way.

For the rest of this paper, we will let $ \chi_{\Theta} \df \func{\chi}{A_{\Theta}} $ and
\begin{gather*}
\Smooth{\chi_{\Theta}} \df \Set{\sum_{j = 1}^{n} a_{j} \bullet \mathbf{e}_{j} \in \chi_{\Theta}}{a_{1},\ldots,a_{n} \in \Smooth{A_{\Theta}}}.
\end{gather*}

\begin{Def}[\cite{Ro}] A \emph{Riemannian metric} is an element of $ \func{M_{n}}{A_{\Theta}} $ that is positive, is invertible, and has all entries in $ \sa{\Br{\Smooth{A_{\Theta}}}} $.
\end{Def}

\begin{Def}[\cite{Ro}] \label{Connection and Levi-Civita Connection} A~\emph{connection} for a Riemannian metric $ g $ is a map $ \nabla\colon \Der{\Theta} \times \Smooth{\chi_{\Theta}} \to \Smooth{\chi_{\Theta}} $ satisfying the following four properties:
\begin{enumerate}\itemsep=0pt
\item[1)] $ \nabla $ is $ \R $-linear in the first variable and $ \C $-linear in the second,
\item[2)] $ \func{\nabla_{\delta}}{a \bullet X} = \func{\delta}{a} \bullet X + a \bullet \func{\nabla_{\delta}}{X} $ for all $ \delta \in \Der{\Theta} $, $ a \in \Smooth{A_{\Theta}} $ and $ X \in \Smooth{\chi_{\Theta}} $ (this property is known as the \emph{Leibniz rule}),
\item[3)] $ \func{\nabla_{\ad{a}}}{X} = a \bullet X $ for all $ a \in \ska{\Br{\Smooth{A_{\Theta}}}} \cap \ker{\tau} $ and $ X \in \Smooth{\chi_{\Theta}} $,
\item[4)] $ \Inner{\func{\nabla_{\partial_{j}}}{\e{k}}}{\e{l}}_{g} $ is self-adjoint for all $ j,k,l \in \SqBr{n} $.
\end{enumerate}
If $ \nabla $ further satisfies the following two properties, then we call it a \emph{Levi-Civita connection} for $ g $:
\begin{enumerate}\itemsep=0pt
\item[5)] $ \func{\nabla_{\partial_{j}}}{\e{k}} = \func{\nabla_{\partial_{k}}}{\e{j}} $ for all $ j,k \in \SqBr{n} $ (this property is known as the \emph{torsion-freeness} of $ \nabla $ (we will not need this property in our proofs),
\item[6)] $\delta \big(\Inner{X}{Y}_{g}\big) = \Inner{\func{\nabla_{\delta}}{X}}{Y}_{g} + \Inner{X}{\func{\nabla_{\delta}}{Y}}_{g} $ for all $ \delta \in \Der{\Theta} $ and $ X,Y \in \Smooth{\chi_{\Theta}} $ (this property is known as the \emph{compatibility} of~$ \nabla $ with~$ g $).
\end{enumerate}
\end{Def}

We now state, for the class of all generically transcendental quantum tori, a noncommutative version of the fundamental theorem of Riemannian geometry.

\begin{Thm}[\cite{Ro}] \label{The Fundamental Theorem of Riemannian Geometry for Generically Transcendental Quantum Tori}
For every Riemannian metric $ g $, there exists a unique Levi-Civita connection $ \nabla^{g} $ for $ g $, which necessarily satisfies the following identity\footnote{This identity is used to prove the classical version of the fundamental theorem of Riemannian geometry.}:
\begin{gather*}
\forall\, j,k,l \in \SqBr{n}\colon \quad
 \big\langle {\func{\nabla^{g}_{\partial_{j}}}{\e{k}}}\,|\,{\e{l}}\big\rangle_{g} = g^{\natural}_{j k l}
\df \frac{1}{2} \cdot \SqBr{\func{\partial_{j}}{g_{k l}} + \func{\partial_{k}}{g_{j l}} - \func{\partial_{l}}{g_{j k}}}.
\end{gather*}
\end{Thm}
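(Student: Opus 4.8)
The identity displayed in the statement is the noncommutative analogue of the Koszul formula, and I would organize the proof around it: \emph{uniqueness} comes from showing that every Levi-Civita connection is \emph{forced} to satisfy the identity, while \emph{existence} comes from running the same computation backwards to manufacture a connection out of the scalars $g^{\natural}_{jkl}$. For uniqueness, let $\nabla$ be a Levi-Civita connection for $g$ and abbreviate $G_{jkl} \df \Inner{\func{\nabla_{\partial_{j}}}{\e{k}}}{\e{l}}_{g}$. Property (4) of Definition~\ref{Connection and Levi-Civita Connection} says each $G_{jkl}$ is self-adjoint, so from $\Inner{X}{Y}_{g}^{\ast} = \Inner{Y}{X}_{g}$ I get $\Inner{\e{k}}{\func{\nabla_{\partial_{j}}}{\e{l}}}_{g} = G_{jlk}^{\ast} = G_{jlk}$. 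Feeding $\delta = \partial_{j}$, $X = \e{k}$, $Y = \e{l}$ into the compatibility identity (6) and using $\Inner{\e{k}}{\e{l}}_{g} = g_{kl}$ from part (3) of Proposition~\ref{Properties of a Riemannian Metric}, I obtain
\begin{gather*}
\func{\partial_{j}}{g_{kl}} = G_{jkl} + G_{jlk},
\end{gather*}
together with the two companion relations $\func{\partial_{k}}{g_{jl}} = G_{kjl} + G_{klj}$ and $\func{\partial_{l}}{g_{jk}} = G_{ljk} + G_{lkj}$ obtained by cyclically permuting $j,k,l$. Adding the first two and subtracting the third, and invoking torsion-freeness (5), which makes $G$ symmetric in its first two subscripts, collapses the right-hand side to $2 G_{jkl}$, whence $G_{jkl} = g^{\natural}_{jkl}$.

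To see that these scalars determine $\nabla$, I would write $\func{\nabla_{\partial_{j}}}{\e{k}} = \sum_{m} c_{m} \bullet \e{m}$; the identity then reads $\sum_{m} c_{m} g_{ml} = g^{\natural}_{jkl}$ for every $l$, a linear system whose unique solution, since $g$ is invertible, is $c_{p} = \sum_{l} g^{\natural}_{jkl} (g^{-1})_{lp}$ (here I use that the smooth subalgebra is inverse-closed, so $g^{-1}$ again has entries in $\Smooth{A_{\Theta}}$). Thus every $\func{\nabla_{\partial_{j}}}{\e{k}}$ is pinned down; the Leibniz rule (property (2) of Definition~\ref{Connection and Levi-Civita Connection}) then fixes $\nabla_{\partial_{j}}$ on all of $\Smooth{\chi_{\Theta}}$, and the unique decomposition of $\Der{\Theta}$ recalled in~(\ref{Decomposition of *-Derivations into Their Invariant and Inner Parts}), together with property (3) of that definition, fixes $\nabla_{\delta}$ for every $\delta$. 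This gives uniqueness.

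For existence I would reverse the construction: \emph{define} $\func{\nabla_{\partial_{j}}}{\e{k}} \df \sum_{p,q} g^{\natural}_{jkp} (g^{-1})_{pq} \bullet \e{q}$, whose coefficients lie in $\Smooth{A_{\Theta}}$ since the $\partial_{j}$ are $\ast$-derivations and $g^{-1}$ is smooth, extend in the second variable by the Leibniz rule and in the first by $\R$-linearity together with $\func{\nabla_{\ad{a}}}{X} \df a \bullet X$, the uniqueness of the $\Der{\Theta}$-decomposition making this well defined. A short computation gives back $\Inner{\func{\nabla_{\partial_{j}}}{\e{k}}}{\e{l}}_{g} = g^{\natural}_{jkl}$, so the stated identity holds. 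Properties (1)--(3) hold by construction; (4) holds because each $g_{kl}$ is self-adjoint and $\partial_{j}$ is a $\ast$-derivation, forcing $g^{\natural}_{jkl}$ to be self-adjoint; and torsion-freeness (5) follows from the symmetry $g_{jk} = g_{kj}$ of the metric (forced by $g = g^{\ast}$ together with self-adjointness of the entries), giving $g^{\natural}_{jkl} = g^{\natural}_{kjl}$.

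The main obstacle is verifying compatibility (6) for the constructed $\nabla$. By $\R$-bilinearity and the Leibniz rule it is enough to check it when $\delta$ is one of the generators $\partial_{j}$ or $\ad{a}$ and $X,Y$ are basis vectors. For $\delta = \partial_{j}$ the claim reduces, via $\Inner{\e{k}}{\func{\nabla_{\partial_{j}}}{\e{l}}}_{g} = g^{\natural}_{jlk}$ and the definition of $g^{\natural}$, to the elementary identity $\func{\partial_{j}}{g_{kl}} = g^{\natural}_{jkl} + g^{\natural}_{jlk}$, which is immediate after the $g_{kl} = g_{lk}$ terms are combined. For $\delta = \ad{a}$ with $a \in \ska{\Br{\Smooth{A_{\Theta}}}} \cap \ker{\tau}$, property (3) turns the right-hand side into $a g_{kl} + g_{lk}^{\ast} a^{\ast}$, which equals $\func{\ad{a}}{g_{kl}} = a g_{kl} - g_{kl} a$ because $g_{lk}$ is self-adjoint and $a$ is skew-adjoint. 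Assembling these cases confirms that $\nabla$ is the desired Levi-Civita connection.
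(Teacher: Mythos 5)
Your proposal is correct, and it reconstructs essentially the argument of Rosenberg's paper \cite{Ro}: the paper under review states this theorem by citation only, and the cited proof likewise derives the Koszul-type identity from compatibility, torsion-freeness and the self-adjointness condition (4) to force $\Inner{\func{\nabla_{\partial_{j}}}{\e{k}}}{\e{l}}_{g} = g^{\natural}_{jkl}$, then runs the computation backwards through the noncommutative Christoffel symbols $\sum_{l} g^{\natural}_{jkl}\big(g^{-1}\big)_{lp}$ (using inverse-closedness of the smooth subalgebra) to get existence, with the inner $\ast$-derivations handled exactly as in your $\ad{a}$ case. Your reduction of compatibility to basis vectors and generators of $\Der{\Theta}$ is valid, so there is no gap.
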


\section[Metrized quantum vector bundles over generically transcendental quantum tori]{Metrized quantum vector bundles\\ over generically transcendental quantum tori}\label{section4}

We will first define quantum compact metric spaces and metrized quantum vector bundles. We will then show how to build metrized quantum vector bundles over a generically transcendental quantum torus from Riemannian metrics, using Rosenberg's Levi-Civita connections for these metrics.

\begin{Def}[\cite{L4}]An \emph{admissible function} is a function $ F\colon \CO{0}{\infty}^{4} \to \CO{0}{\infty} $ with the fol\-lowing properties:
\begin{itemize}\itemsep=0pt
\item $ F $ is non-decreasing in each argument,
\item $ w z + x y \leq \func{F}{w,x,y,z} $ for all $ w,x,y,z \in \CO{0}{\infty} $.
\end{itemize}
If $ \func{F}{w,x,y,z} = w z + x y $ for all $ w,x,y,z \in \CO{0}{\infty} $, then we say that it is \emph{Leibniz}.
\end{Def}

\begin{Def}[\cite{L2,L4}] \label{Quasi-Leibniz Quantum Compact Metric Space} Let $ F $ be an admissible function. An \emph{$ F $-quasi-Leibniz quantum compact metric space} is then an ordered pair $ \Pair{A}{\mathsf{L}} $ satisfying the following six properties:
\begin{enumerate}\itemsep=0pt
\item[1)] $ A $ is a unital $ C^{\ast} $-algebra,
\item[2)] $ \L $ is a seminorm defined on a dense Jordan--Lie subalgebra of $ \sa{A} $,
\item[3)] $ \Set{a \in \Dom{\L}}{\func{\L}{a} = 0} = \R \cdot 1_{A} $,
\item[4)] the \emph{Monge--Kantorovich metric} of $ \L $, which is the function $ \mk_{\L}\colon \State{A} \times \State{A} \to \CC{0}{\infty} $ defined by
\begin{gather*}
\forall\, \phi,\psi \in \State{A}\colon \quad
\func{\mk_{\L}}{\phi,\psi} \df \func{\sup}{\Set{\abs{\func{\phi}{a} - \func{\psi}{a}}}{a \in \L^{1}}},
\end{gather*}
metrizes the weak-$ \ast $ topology on $ \State{A} $,
\item[5)] $ \L^{1} $ is $ \Norm{\cdot}_{A} $-closed in $ A $ (equivalently, $ \L^{r} $ is $ \Norm{\cdot}_{A} $-closed in $ A $ for every $ r \in \R_{> 0} $),
\item[6)] for all $ a,b \in \Dom{\L} $, we have
\begin{gather*}
 \func{\max}{\func{\L}{\frac{1}{2} \cdot \Br{a b + b a}},\func{\L}{\frac{1}{2 i} \cdot \Br{a b - b a}}}\leq \func{F}{\Norm{a}_{A},\Norm{b}_{A},\func{\L}{a},\func{\L}{b}}.
\end{gather*}
\end{enumerate}
If $ F $ is implicitly understood, then we simply call $ \Pair{A}{\L} $ a \emph{quasi-Leibniz quantum compact metric space}, and if~$ F $ is Leibniz, then we simply call $ \Pair{A}{\L} $ a \emph{Leibniz quantum compact metric space}. The seminorm $ \L $ is called a \emph{Lipschitz seminorm}, and it is what endows $ A $ with its special metric structure.
\end{Def}

\begin{Eg} The following classes of $ C^{\ast} $-algebras can be endowed with Lipschitz seminorms that turn them into quasi-Leibniz quantum compact metric spaces: AF $ C^{\ast} $-algebras~\cite{AL}, curved noncommutative tori~\cite{L1}, noncommutative solenoids~\cite{LP}, and $ C^{\ast} $-algebras equipped with an ergodic action of a compact group~\cite{Ri1}.
\end{Eg}

\begin{Def}[\cite{L4}]An \emph{admissible triple} is an ordered triple $ \Trip{F}{G}{H} $ with the following properties:
\begin{itemize}\itemsep=0pt
\item $ F $ is an admissible function,
\item $ G\colon \CO{0}{\infty}^{3} \to \CO{0}{\infty} $ and $ H\colon \CO{0}{\infty}^{2} \to \CO{0}{\infty} $,
\item $ G $ and $ H $ are non-decreasing in each of their arguments,
\item $ \Br{x + y} z \leq \func{G}{x,y,z} $ and $ 2 x y \leq \func{H}{x,y} $ for all $ x,y,z \in \CO{0}{\infty} $.
\end{itemize}
\end{Def}

\begin{Def}[\cite{L4}] \label{Metrized Quantum Vector Bundle} Let $ \Trip{F}{G}{H} $ be an admissible triple. An \emph{$ \Trip{F}{G}{H} $-metrized quantum vector bundle} is then an ordered $ 5 $-tuple $ \Br{\X,\Inn,\D,A,\L} $ satisfying the following seven properties:
\begin{enumerate}\itemsep=0pt
\item[1)] $ \Pair{A}{\L} $ is an $ F $-quasi-Leibniz quantum compact metric space,
\item[2)] $ \Pair{\X}{\Inn} $ is a left Hilbert $ A $-module (we denote the associated norm on $ \X $ by $ \Norm{\cdot}_{\Inn} $),
\item[3)] $ \D $ is a norm (not merely a seminorm) defined on a $ \Norm{\cdot}_{\Inn} $-dense $ \C $-linear subspace of $ \X $,
\item[4)] $ \Norm{\zeta}_{\Inn} \leq \func{\D}{\zeta} $ for all $ \zeta \in \Dom{\D} $,
\item[5)] $ \D^{1} $ is $ \Norm{\cdot}_{\Inn} $-compact in $ \X $ (equivalently, $ \D^{r} $ is $ \Norm{\cdot}_{\Inn} $-compact in $ \X $ for every $ r \in \R_{> 0} $),
\item[6)] for all $ a \in \Dom{\L} $ and $ \zeta \in \Dom{\D} $, we have $ a \bullet \zeta \in \Dom{\D} $ and
\begin{gather*}
\func{\D}{a \bullet \zeta} \leq \func{G}{\Norm{a}_{A},\func{\L}{a},\func{\D}{\zeta}},
\end{gather*}
\item[7)] for all $ \zeta,\eta \in \Dom{\D} $, we have $ \func{\Re}{\Inner{\zeta}{\eta}},\func{\Im}{\Inner{\zeta}{\eta}} \in \Dom{\L} $ and
\begin{gather*}
 \func{\max}{\func{\L}{\func{\Re}{\Inner{\zeta}{\eta}}},\func{\L}{\func{\Im}{\Inner{\zeta}{\eta}}}}
\leq \func{H}{\func{\D}{\zeta},\func{\D}{\eta}}.
\end{gather*}
\end{enumerate}
If $ \Trip{F}{G}{H} $ is implicitly understood, then we simply call $ \Br{\X,\Inn,\D,A,\L} $ a \emph{metrized quantum vector bundle}.
\end{Def}

\begin{Eg}In \cite{L4}, Latr\'emoli\`ere constructed metrized quantum vector bundles from actual vector bundles over compact Riemannian manifolds that provide motivating examples for Definition~\ref{Metrized Quantum Vector Bundle}. In the same paper, he also showed that free Hilbert modules over the underlying unital $ C^{\ast} $-algebra of a quasi-Leibniz quantum compact metric space can be turned into metrized quantum vector bundles. The tools developed in~\cite{L4} are then applied in~\cite{L5} to prove the convergence of Heisenberg modules over quantum $ 2 $-tori with respect to the modular Gromov--Hausdorff propinquity.
\end{Eg}

Viewing $ \T^{n} $ as $ \Quot{\R^{n}}{\Z^{n}} $, we may define an $ N $-dependent continuous length function $ \ell $ on $ \T^{n} $ by
\begin{gather*}
\forall\, \bm{s} \in \CO{- \frac{1}{2}}{\frac{1}{2}}^{n}\colon \quad \func{\ell}{\bm{s} + \Z^{n}} \df \func{N}{\bm{s}}.
\end{gather*}
Using $ \ell $, we may then define an $ N $-dependent seminorm $ \L $ on a dense Jordan--Lie subalgebra of~$ \sa{\Br{A_{\Theta}}} $ by
\begin{gather*}
 \Dom{\L} = \left\{{a \in \sa{\Br{A_{\Theta}}}}\,\bigg|\,
 {
 \func{\sup}{\Set{\frac{\Norm{\alph{\bm{t}}{a} - a}_{A_{\Theta}}}{\func{\ell}{\bm{t}}}}{\bm{t} \in \T^{n} \setminus \SSet{\bm{0}}}}
 < \infty }\right\}, \\
\forall\, a \in \Dom{\L}\colon \quad \func{\L}{a} \df \func{\sup}{\Set{\frac{\Norm{\alph{\bm{t}}{a} - a}_{A_{\Theta}}}{\func{\ell}{\bm{t}}}}{\bm{t} \in \T^{n} \setminus \SSet{\bm{0}}}}.
\end{gather*}
The action $ \alpha $ of $ \T^{n} $ on $ A_{\Theta} $ is ergodic, so $ \Pair{A_{\Theta}}{\L} $ is a \emph{Leibniz} quantum compact metric space by \cite[Section~2]{Ri1}. Furthermore, according to \cite[Proposition~8.6]{Ri3}, we have
\begin{gather}
\forall\, a \in \sa{\Br{\Smooth{A_{\Theta}}}}\colon \notag \\
 \func{\L}{a}
= \func{\max}{
 \Set{\Norm{\sum_{j = 1}^{n} r_{j} \cdot \func{\partial_{j}}{a}}_{A_{\Theta}}}
 {r_{1},\ldots,r_{n} \in \R ~ \text{and} ~ \func{N}{r_{1},\ldots,r_{n}} \leq 1}
 }. \label{An Alternative Expression for Rieffel's Lipschitz Seminorm}
\end{gather}
This alternative expression for $ \L $ on $ \sa{\Br{\Smooth{A_{\Theta}}}} $ will play an important role in what is to follow.

\begin{Lem} \label{A Preliminary Inequality} Let $ \delta \in \Der{\Theta} $. Then $ \Norm{\func{\delta}{a}}_{A_{\Theta}} \leq \big[2 \Norm{a}_{A_{\Theta}} + \func{\L}{a}\big] \Norm{\delta}_{\Der{\Theta}} $ for all $ a \in \sa{\Br{\Smooth{A_{\Theta}}}} $.
\end{Lem}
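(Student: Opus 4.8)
The plan is to reduce everything to the explicit description of $ \Der{\Theta} $ in~(\ref{Decomposition of *-Derivations into Their Invariant and Inner Parts}) and of the Lipschitz seminorm in~(\ref{An Alternative Expression for Rieffel's Lipschitz Seminorm}). First I would use that parametrization to write $ \delta = \sum_{j = 1}^{n} r_{j} \cdot \partial_{j} + \ad{b} $ for the unique $ r_{1},\ldots,r_{n} \in \R $ and $ b \in \ska{\Br{\Smooth{A_{\Theta}}}} \cap \ker{\tau} $ representing $ \delta $, so that, by the definition of the transferred norm, $ \Norm{\delta}_{\Der{\Theta}} = \func{N}{r_{1},\ldots,r_{n}} + \Norm{b}_{A_{\Theta}} $. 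Evaluating at $ a $ and applying the triangle inequality separates $ \Norm{\func{\delta}{a}}_{A_{\Theta}} $ into an invariant part $ \Norm{\sum_{j = 1}^{n} r_{j} \cdot \func{\partial_{j}}{a}}_{A_{\Theta}} $ and an inner part $ \Norm{\func{\ad{b}}{a}}_{A_{\Theta}} $.

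Next I would estimate the two parts separately. The inner part is a commutator, so the elementary bound $ \Norm{\func{\ad{b}}{a}}_{A_{\Theta}} \leq 2 \Norm{a}_{A_{\Theta}} \Norm{b}_{A_{\Theta}} $ disposes of it. For the invariant part I would invoke~(\ref{An Alternative Expression for Rieffel's Lipschitz Seminorm}): that identity gives $ \Norm{\sum_{j = 1}^{n} r_{j} \cdot \func{\partial_{j}}{a}}_{A_{\Theta}} \leq \func{\L}{a} $ whenever $ \func{N}{r_{1},\ldots,r_{n}} \leq 1 $, and rescaling the coefficients by $ 1 / \func{N}{r_{1},\ldots,r_{n}} $ (using the positive homogeneity of $ N $; the case of vanishing coefficients is trivial) upgrades this to $ \Norm{\sum_{j = 1}^{n} r_{j} \cdot \func{\partial_{j}}{a}}_{A_{\Theta}} \leq \func{N}{r_{1},\ldots,r_{n}} \func{\L}{a} $ for arbitrary coefficients. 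Combining the two estimates yields
\begin{gather*}
\Norm{\func{\delta}{a}}_{A_{\Theta}} \leq \func{N}{r_{1},\ldots,r_{n}} \func{\L}{a} + 2 \Norm{a}_{A_{\Theta}} \Norm{b}_{A_{\Theta}}.
\end{gather*}

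Finally I would check that this right-hand side is dominated by $ \big[2 \Norm{a}_{A_{\Theta}} + \func{\L}{a}\big] \Norm{\delta}_{\Der{\Theta}} $. Abbreviating $ R = \func{N}{r_{1},\ldots,r_{n}} $, $ B = \Norm{b}_{A_{\Theta}} $, $ M = \Norm{a}_{A_{\Theta}} $ and $ L = \func{\L}{a} $, the claim becomes $ R L + 2 M B \leq \Br{2 M + L} \Br{R + B} $, and the difference $ \Br{2 M + L} \Br{R + B} - \Br{R L + 2 M B} = 2 M R + L R + L B $ is manifestly non-negative. I do not anticipate a genuine obstacle here: the substance lies in correctly quoting the structure of $ \Der{\Theta} $ and the formula~(\ref{An Alternative Expression for Rieffel's Lipschitz Seminorm}), after which the estimate collapses to this one-line algebraic check. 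The only mildly delicate point is the homogeneity argument for the invariant part, needed to pass from the normalized case covered by~(\ref{An Alternative Expression for Rieffel's Lipschitz Seminorm}) to arbitrary coefficients $ r_{1},\ldots,r_{n} $.
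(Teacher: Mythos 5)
Your proposal is correct and takes essentially the same route as the paper's proof: the same decomposition $ \delta = \sum_{j = 1}^{n} r_{j} \cdot \partial_{j} + \ad{b} $ with $ \Norm{\delta}_{\Der{\Theta}} = \func{N}{r_{1},\ldots,r_{n}} + \Norm{b}_{A_{\Theta}} $, the same commutator bound $ \Norm{\SqBr{b,a}}_{A_{\Theta}} \leq 2 \Norm{b}_{A_{\Theta}} \Norm{a}_{A_{\Theta}} $ for the inner part, and the same appeal to (\ref{An Alternative Expression for Rieffel's Lipschitz Seminorm}) for the invariant part (the paper performs your rescaling step implicitly, simply bounding $ \func{N}{r_{1},\ldots,r_{n}} $ and $ \Norm{b}_{A_{\Theta}} $ each by $ \Norm{\delta}_{\Der{\Theta}} $ before applying the identity, whereas you keep the sharper intermediate bound and verify the final inequality algebraically). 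One trivial slip in that final check: the difference equals $ 2 M R + L B $ rather than $ 2 M R + L R + L B $ (the $ L R $ terms cancel), but both are manifestly non-negative, so your conclusion is unaffected.
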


\begin{proof}Write $ \d \delta = \sum_{j = 1}^{n} r_{j} \cdot \partial_{j} + \ad{b} $, where $ r_{1},\ldots,r_{n} \in \R $ and $ b \in \ska{\Br{\Smooth{A_{\Theta}}}} \cap \ker{\tau} $. As
\begin{gather*}
\Norm{\delta}_{\Der{\Theta}} = \func{N}{r_{1},\ldots,r_{n}} + \Norm{b}_{A_{\Theta}},
\end{gather*}
we have $ \func{N}{r_{1},\ldots,r_{n}} \leq \Norm{\delta}_{\Der{\Theta}} $ and $ \Norm{b}_{A_{\Theta}} \leq \Norm{\delta}_{\Der{\Theta}} $. Hence,
\begin{align*}
 \Norm{\func{\delta}{a}}_{A_{\Theta}}
& = \Norm{\sum_{j = 1}^{n} r_{j} \cdot \func{\partial_{j}}{a} + \SqBr{b,a}}_{A_{\Theta}} \\
& \leq \Norm{\sum_{j = 1}^{n} r_{j} \cdot \func{\partial_{j}}{a}}_{A_{\Theta}} + \Norm{\SqBr{b,a}}_{A_{\Theta}} \\
& \leq \Norm{\sum_{j = 1}^{n} r_{j} \cdot \func{\partial_{j}}{a}}_{A_{\Theta}} + 2 \Norm{b}_{A_{\Theta}} \Norm{a}_{A_{\Theta}} \\
& \leq \Norm{\delta}_{\Der{\Theta}} \func{\L}{a} + 2 \Norm{\delta}_{\Der{\Theta}} \Norm{a}_{A_{\Theta}} \quad \Br{\text{by (\ref{An Alternative Expression for Rieffel's Lipschitz Seminorm})}} \\
& = \big[2 \Norm{a}_{A_{\Theta}} + \func{\L}{a}\big] \Norm{\delta}_{\Der{\Theta}}.
\end{align*}
This completes the proof.
\end{proof}

\begin{Def}For every Riemannian metric $ g $, define an $ N $-dependent seminorm $ \OpNorm{\cdot}_{g} $ on $ \Smooth{\chi_{\Theta}} $ by
\begin{gather*}
\forall\, X \in \Smooth{\chi_{\Theta}}\colon \quad \OpNorm{X}_{g}
\df \sup \big(\big\{ {\Norm{\func{\nabla^{g}_{\delta}}{X}}_{g}}\,|\,{\delta \in \Der{\Theta} ~ \text{and} ~ \Norm{\delta}_{\Der{\Theta}} \leq 1}\big\}\big),
\end{gather*}
and an $ N $-dependent norm $ \D_{g} $ on $ \Smooth{\chi_{\Theta}} $ by $ \D_{g} \df \max \big({\Norm{\cdot}_{g},\OpNorm{\cdot}_{g}}\big)$.\footnote{This is the point where Rosenberg's Levi-Civita connections come in.}
\end{Def}

\noindent \textbf{Note:} For the rest of \emph{this section only}, $ g $ denotes a Riemannian metric.

\begin{Prop} \label{The G-Inequality}
Let $ a \in \sa{\Br{\Smooth{A_{\Theta}}}} $ and $ X \in \Smooth{\chi_{\Theta}} $. Then $ a \bullet X \in \Smooth{\chi_{\Theta}} $ and
\begin{gather*}
\DArg{g}{a \bullet X} \leq G \big({\Norm{a}_{A_{\Theta}},\func{\L}{a},\DArg{g}{X}}\big),
\end{gather*}
where $ G\colon \CO{0}{\infty}^{3} \to \CO{0}{\infty} $ is defined by $ \func{G}{x,y,z} \df \Br{3 x + y} z $ for all $ x,y,z \in \CO{0}{\infty} $.
\end{Prop}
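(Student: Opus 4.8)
The plan is to bound separately the two ingredients of $\DArg{g}{\cdot} = \func{\max}{\Norm{\cdot}_{g},\OpNorm{\cdot}_{g}}$, since a bound on each by $\Br{3 \Norm{a}_{A_{\Theta}} + \func{\L}{a}} \DArg{g}{X}$ immediately yields the claimed inequality for their maximum. The membership $a \bullet X \in \Smooth{\chi_{\Theta}}$ is clear, as $\Smooth{A_{\Theta}}$ is a $\ast$-subalgebra of $A_{\Theta}$ and $\bullet$ is componentwise left multiplication. The computational engine of the whole argument is the submultiplicativity of left multiplication with respect to the $g$-norm, namely $\Norm{b \bullet Z}_{g} \leq \Norm{b}_{A_{\Theta}} \Norm{Z}_{g}$ for all $b \in A_{\Theta}$ and $Z \in \Smooth{\chi_{\Theta}}$. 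I would derive this from part (2) of Proposition~\ref{Properties of a Riemannian Metric} together with the $A_{\Theta}$-linearity of $T_{\sqrt{g}}$: writing $\Norm{b \bullet Z}_{g} = \big\|\func{T_{\sqrt{g}}}{b \bullet Z}\big\|_{\st} = \big\|b \bullet \func{T_{\sqrt{g}}}{Z}\big\|_{\st}$, and then using $\Inner{b \bullet W}{b \bullet W}_{\st} = b \Inner{W}{W}_{\st} b^{\ast}$ to get $\Norm{b \bullet W}_{\st} \leq \Norm{b}_{A_{\Theta}} \Norm{W}_{\st}$.

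Granting this, the norm part is immediate: $\Norm{a \bullet X}_{g} \leq \Norm{a}_{A_{\Theta}} \Norm{X}_{g} \leq \Norm{a}_{A_{\Theta}} \DArg{g}{X}$, which does not exceed $\Br{3 \Norm{a}_{A_{\Theta}} + \func{\L}{a}} \DArg{g}{X}$.

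For the operator part, I would fix $\delta \in \Der{\Theta}$ with $\Norm{\delta}_{\Der{\Theta}} \leq 1$ and apply the Leibniz rule (property (2) of Definition~\ref{Connection and Levi-Civita Connection}) to the Levi-Civita connection $\nabla^{g}$, giving $\func{\nabla^{g}_{\delta}}{a \bullet X} = \func{\delta}{a} \bullet X + a \bullet \func{\nabla^{g}_{\delta}}{X}$. The triangle inequality and the submultiplicativity bound applied to both summands yield $\Norm{\func{\nabla^{g}_{\delta}}{a \bullet X}}_{g} \leq \Norm{\func{\delta}{a}}_{A_{\Theta}} \Norm{X}_{g} + \Norm{a}_{A_{\Theta}} \Norm{\func{\nabla^{g}_{\delta}}{X}}_{g}$. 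Next I would invoke Lemma~\ref{A Preliminary Inequality} with $\Norm{\delta}_{\Der{\Theta}} \leq 1$ to bound $\Norm{\func{\delta}{a}}_{A_{\Theta}} \leq 2 \Norm{a}_{A_{\Theta}} + \func{\L}{a}$, and the definition of $\OpNorm{\cdot}_{g}$ to bound $\Norm{\func{\nabla^{g}_{\delta}}{X}}_{g} \leq \OpNorm{X}_{g}$. Using $\Norm{X}_{g} \leq \DArg{g}{X}$ and $\OpNorm{X}_{g} \leq \DArg{g}{X}$, the estimate collapses to $\Norm{\func{\nabla^{g}_{\delta}}{a \bullet X}}_{g} \leq \Br{3 \Norm{a}_{A_{\Theta}} + \func{\L}{a}} \DArg{g}{X}$, and taking the supremum over all such $\delta$ gives the same bound for $\OpNorm{a \bullet X}_{g}$.

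The argument is essentially routine once the submultiplicativity bound is in place, so the only real obstacle is its clean derivation — in particular, keeping track of the fact that $\Inn_{g}$ is left $A_{\Theta}$-linear in its first slot and that $T_{\sqrt{g}}$ commutes with the left action, so that the Hilbert-module estimate $\Norm{b \bullet W}_{\st}^{2} = \big\|b \Inner{W}{W}_{\st} b^{\ast}\big\|_{A_{\Theta}} \leq \Norm{b}_{A_{\Theta}}^{2} \Norm{W}_{\st}^{2}$ transfers to the $g$-norm. I would also note that Lemma~\ref{A Preliminary Inequality} requires $a$ to be self-adjoint, which is exactly our hypothesis, whereas the submultiplicativity bound is applied to the possibly non-self-adjoint element $\func{\delta}{a}$, where it remains valid since it holds for an arbitrary $b \in A_{\Theta}$.
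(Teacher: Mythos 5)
Your proposal is correct and follows essentially the same route as the paper: split $\D_{g}$ into its two constituents, bound $\Norm{a \bullet X}_{g}$ by submultiplicativity, and bound $\OpNorm{a \bullet X}_{g}$ via the Leibniz rule, Lemma~\ref{A Preliminary Inequality}, and the definition of $\OpNorm{\cdot}_{g}$, then take the maximum. The only difference is that you explicitly derive the Hilbert-module estimate $\Norm{b \bullet Z}_{g} \leq \Norm{b}_{A_{\Theta}} \Norm{Z}_{g}$ (which the paper uses without comment), and your side remark that $\func{\delta}{a}$ may fail to be self-adjoint is harmlessly overcautious, since $\delta$ is a $\ast$-derivation and $a$ is self-adjoint.
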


\begin{proof}
It is clear from the definition of $ \Smooth{\chi_{\Theta}} $ that $ a \bullet X \in \Smooth{\chi_{\Theta}} $.

Next, by the Leibniz rule,
\begin{gather*}
\forall\, \delta \in \Der{\Theta}\colon \quad
\func{\nabla^{g}_{\delta}}{a \bullet X} = \func{\delta}{a} \bullet X + a \bullet \func{\nabla^{g}_{\delta}}{X}.
\end{gather*}
Hence, we have for all $ \delta \in \Der{\Theta} $ satisfying $ \Norm{\delta}_{\Der{\Theta}} \leq 1 $ that
\begin{align*}
 \Norm{\func{\nabla^{g}_{\delta}}{a \bullet X}}_{g}
& \leq \Norm{\func{\delta}{a} \bullet X}_{g} + \Norm{a \bullet \func{\nabla^{g}_{\delta}}{X}}_{g} \\
& \leq \Norm{\func{\delta}{a}}_{A_{\Theta}} \Norm{X}_{g} + \Norm{a}_{A_{\Theta}} \Norm{\func{\nabla^{g}_{\delta}}{X}}_{g} \\
& \leq \big[2 \Norm{a}_{A_{\Theta}} + \func{\L}{a}\big] \Norm{X}_{g} + \Norm{a}_{A_{\Theta}} \Norm{\func{\nabla^{g}_{\delta}}{X}}_{g} \quad \Br{\text{by Lemma~\ref{A Preliminary Inequality}}} \\
& \leq \big[2 \Norm{a}_{A_{\Theta}} + \func{\L}{a}\big] \Norm{X}_{g} + \Norm{a}_{A_{\Theta}} \OpNorm{X}_{g} \Norm{\delta}_{\Der{\Theta}} \\
& \leq \big[2 \Norm{a}_{A_{\Theta}} + \func{\L}{a}\big] \Norm{X}_{g} + \Norm{a}_{A_{\Theta}} \OpNorm{X}_{g} \\
& \leq \big[2 \Norm{a}_{A_{\Theta}} + \func{\L}{a}\big] \DArg{g}{X} + \Norm{a}_{A_{\Theta}} \DArg{g}{X} \\
& = \big[3 \Norm{a}_{A_{\Theta}} + \func{\L}{a}\big] \DArg{g}{X},
\end{align*}
which immediately yields
\begin{gather*}
\OpNorm{a \bullet X}_{g} \leq \big[3 \Norm{a}_{A_{\Theta}} + \func{\L}{a}\big] \DArg{g}{X}.
\end{gather*}
At the same time, it is straightforward to see that
\begin{gather*}
 \Norm{a \bullet X}_{g} \leq \Norm{a}_{A_{\Theta}} \Norm{X}_{g} \leq \Norm{a}_{A_{\Theta}} \DArg{g}{X} \leq \big[3 \Norm{a}_{A_{\Theta}} + \func{\L}{a}\big] \DArg{g}{X}.
\end{gather*}
Therefore,
\begin{gather*}
 \DArg{g}{a \bullet X} = \max \big({\Norm{a \bullet X}_{g},\OpNorm{a \bullet X}_{g}}\big)
\leq G \big({\Norm{a}_{A_{\Theta}},\func{\L}{a},\DArg{g}{X}}\big)
\end{gather*}
as required.
\end{proof}

\begin{Prop} \label{The H-Inequality}
Let $ X,Y \in \Smooth{\chi_{\Theta}} $. Then $ \Re \big({\Inner{X}{Y}_{g}}\big), \Im \big({\Inner{X}{Y}_{g}}\big) \in \sa{\Br{\Smooth{A_{\Theta}}}} $ and
\begin{gather*}
 \max \big( \L \big( \Re \big(\Inner{X}{Y}_{g}\big)\big), \L \big( \Im \big(\Inner{X}{Y}_{g}\big)\big)\big) \leq H \big(\DArg{g}{X},\DArg{g}{Y}\big),
\end{gather*}
where $ H\colon \CO{0}{\infty}^{2} \to \CO{0}{\infty} $ is defined by $ \func{H}{x,y} \df 2 x y $ for all $ x,y \in \CO{0}{\infty} $.
\end{Prop}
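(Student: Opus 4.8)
The plan is to estimate $ \L $ of the real and imaginary parts of $ \Inner{X}{Y}_{g} $ by reducing, through the alternative expression~(\ref{An Alternative Expression for Rieffel's Lipschitz Seminorm}), to norm bounds on directional derivatives, which I then control using the compatibility axiom of the Levi-Civita connection together with the Cauchy--Schwarz inequality for Hilbert $ C^{\ast} $-modules. First I would note that writing $ X = \sum_{j} a_{j} \bullet \e{j} $ and $ Y = \sum_{k} b_{k} \bullet \e{k} $ with $ a_{j},b_{k} \in \Smooth{A_{\Theta}} $, and recalling that the entries of $ g $ lie in $ \sa{\Br{\Smooth{A_{\Theta}}}} $, the explicit formula $ \Inner{X}{Y}_{g} = \sum_{k} \Br{\sum_{j} a_{j} g_{j k}} b_{k}^{\ast} $ exhibits $ \Inner{X}{Y}_{g} $ as an element of $ \Smooth{A_{\Theta}} $; hence $ \func{\Re}{\Inner{X}{Y}_{g}} $ and $ \func{\Im}{\Inner{X}{Y}_{g}} $ are self-adjoint smooth elements, and~(\ref{An Alternative Expression for Rieffel's Lipschitz Seminorm}) applies to both.

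Next I would fix $ r_{1},\ldots,r_{n} \in \R $ with $ \func{N}{r_{1},\ldots,r_{n}} \leq 1 $ and set $ \delta \df \sum_{j = 1}^{n} r_{j} \cdot \partial_{j} \in \Der{\Theta} $. This $ \delta $ has trivial inner part, so $ \Norm{\delta}_{\Der{\Theta}} = \func{N}{r_{1},\ldots,r_{n}} \leq 1 $. Because each $ \partial_{j} $ is a $ \ast $-derivation (so that $ \func{\partial_{j}}{c^{\ast}} = \func{\partial_{j}}{c}^{\ast} $) and the $ r_{j} $ are real, $ \delta $ commutes with $ \Re $ and $ \Im $, giving $ \func{\delta}{\func{\Re}{\Inner{X}{Y}_{g}}} = \func{\Re}{\func{\delta}{\Inner{X}{Y}_{g}}} $ and the analogous identity for $ \Im $. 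Together with the elementary inequalities $ \Norm{\func{\Re}{z}}_{A_{\Theta}},\Norm{\func{\Im}{z}}_{A_{\Theta}} \leq \Norm{z}_{A_{\Theta}} $, this reduces both seminorm estimates to a single bound on $ \Norm{\func{\delta}{\Inner{X}{Y}_{g}}}_{A_{\Theta}} $.

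The core of the argument is the compatibility property~(6) of Definition~\ref{Connection and Levi-Civita Connection}, which yields $ \func{\delta}{\Inner{X}{Y}_{g}} = \Inner{\func{\nabla^{g}_{\delta}}{X}}{Y}_{g} + \Inner{X}{\func{\nabla^{g}_{\delta}}{Y}}_{g} $. I would then apply the triangle inequality, the Cauchy--Schwarz bound $ \Norm{\Inner{\xi}{\eta}_{g}}_{A_{\Theta}} \leq \Norm{\xi}_{g} \Norm{\eta}_{g} $, and the estimate $ \Norm{\func{\nabla^{g}_{\delta}}{X}}_{g} \leq \OpNorm{X}_{g} \Norm{\delta}_{\Der{\Theta}} \leq \OpNorm{X}_{g} $ coming from the definition of $ \OpNorm{\cdot}_{g} $, to obtain $ \Norm{\func{\delta}{\Inner{X}{Y}_{g}}}_{A_{\Theta}} \leq \OpNorm{X}_{g} \Norm{Y}_{g} + \Norm{X}_{g} \OpNorm{Y}_{g} $. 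Since $ \DArg{g}{\cdot} = \max\Br{\Norm{\cdot}_{g},\OpNorm{\cdot}_{g}} $ dominates each of $ \Norm{\cdot}_{g} $ and $ \OpNorm{\cdot}_{g} $, every summand is at most $ \DArg{g}{X} \DArg{g}{Y} $, so $ \Norm{\func{\delta}{\Inner{X}{Y}_{g}}}_{A_{\Theta}} \leq 2 \DArg{g}{X} \DArg{g}{Y} $. Taking the supremum over all admissible $ r_{1},\ldots,r_{n} $ and invoking~(\ref{An Alternative Expression for Rieffel's Lipschitz Seminorm}) gives $ \func{\L}{\func{\Re}{\Inner{X}{Y}_{g}}},\func{\L}{\func{\Im}{\Inner{X}{Y}_{g}}} \leq 2 \DArg{g}{X} \DArg{g}{Y} = \func{H}{\DArg{g}{X},\DArg{g}{Y}} $, which is the desired inequality.

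The one step requiring genuine care is the passage of $ \delta $ through $ \Re $ and $ \Im $, namely that the coordinate derivations preserve self-adjointness; everything after that is a bookkeeping of norms, and the constant $ 2 $ together with the product $ \DArg{g}{X} \DArg{g}{Y} $ is produced entirely by the compatibility axiom fed into the Cauchy--Schwarz inequality.
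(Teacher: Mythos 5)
Your proposal is correct and follows essentially the same route as the paper's proof: both reduce to bounding $\Norm{\func{\delta}{\Inner{X}{Y}_{g}}}_{A_{\Theta}}$ for $\delta = \sum_{j} r_{j} \cdot \partial_{j}$ with $\func{N}{r_{1},\ldots,r_{n}} \leq 1$ via the compatibility axiom, Cauchy--Schwarz, and the definition of $\OpNorm{\cdot}_{g}$, then pass through $\Re$ and $\Im$ and invoke the alternative expression~(\ref{An Alternative Expression for Rieffel's Lipschitz Seminorm}) for $\L$. The only differences are cosmetic: you spell out the explicit formula $\Inner{X}{Y}_{g} = \sum_{k} \Br{\sum_{j} a_{j} g_{j k}} b_{k}^{\ast}$ for the smoothness claim and justify explicitly why $\delta$ commutes with $\Re$ and $\Im$, both of which the paper treats as immediate.
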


\begin{proof}
It is clear from the definition of $ \Smooth{\chi_{\Theta}} $ that $ \Re\big(\Inner{X}{Y}_{g}\big), \Im\big(\Inner{X}{Y}_{g}\big) \in \sa{\Br{\Smooth{A_{\Theta}}}} $.

Next, choose $ r_{1},\ldots,r_{n} \in \R $ so that $ \func{N}{r_{1},\ldots,r_{n}} \leq 1 $. As $ \nabla^{g} $ is compatible\footnote{Metric compatibility is a requirement in the definition of a Levi-Civita connection.} with $ g $, we have
\begin{align*}
 \Norm{\sum_{j = 1}^{n} r_{j} \cdot \func{\partial_{j}}{\Inner{X}{Y}_{g}}}_{A_{\Theta}}
& = \Norm{
 \Inner{\func{\nabla^{g}_{\sum_{j = 1}^{n} r_{j} \cdot \partial_{j}}}{X}}{Y}_{g} +
 \InnerR{X}{\func{\nabla^{g}_{\sum_{j = 1}^{n} r_{j} \cdot \partial_{j}}}{Y}}_{g}
 }_{A_{\Theta}} \\
& \leq \Norm{\Inner{\func{\nabla^{g}_{\sum_{j = 1}^{n} r_{j} \cdot \partial_{j}}}{X}}{Y}_{g}}_{A_{\Theta}} +
 \Norm{\InnerR{X}{\func{\nabla^{g}_{\sum_{j = 1}^{n} r_{j} \cdot \partial_{j}}}{Y}}_{g}}_{A_{\Theta}} \\
& \leq \Norm{\func{\nabla^{g}_{\sum_{j = 1}^{n} r_{j} \cdot \partial_{j}}}{X}}_{g} \Norm{Y}_{g} +
 \Norm{X}_{g} \Norm{\func{\nabla^{g}_{\sum_{j = 1}^{n} r_{j} \cdot \partial_{j}}}{Y}}_{g} \\
& \leq \OpNorm{X}_{g} \Norm{\sum_{j = 1}^{n} r_{j} \cdot \partial_{j}}_{\Der{\Theta}} \Norm{Y}_{g} +
 \Norm{X}_{g} \OpNorm{Y}_{g} \Norm{\sum_{j = 1}^{n} r_{j} \cdot \partial_{j}}_{\Der{\Theta}} \\
& \leq \OpNorm{X}_{g} \Norm{Y}_{g} + \Norm{X}_{g} \OpNorm{Y}_{g} \quad
 \Br{\text{as $ \func{N}{r_{1},\ldots,r_{n}} \leq 1 $}} \\
& \leq 2 \DArg{g}{X} \DArg{g}{Y}.
\end{align*}
It is an easily-verified fact for any $ C^{\ast} $-algebra $ B $ that $ \Norm{\func{\Re}{b}}_{B},\Norm{\func{\Im}{b}}_{B} \leq \Norm{b}_{B} $ for all $ b \in B $, so
\begin{gather*}
 \Norm{\sum_{j = 1}^{n} r_{j} \cdot \partial_{j} \big( \Re \big(\Inner{X}{Y}_{g}\big)\big)}_{A_{\Theta}}
 = \Norm{\func{\Re}{\sum_{j = 1}^{n} r_{j} \cdot \partial_{j}\big(\Inner{X}{Y}_{g}\big)}}_{A_{\Theta}}
 \leq 2 \DArg{g}{X} \DArg{g}{Y}, \\
 \Norm{\sum_{j = 1}^{n} r_{j} \cdot \partial_{j}\big( \Im \big(\Inner{X}{Y}_{g}\big)\big)}_{A_{\Theta}}
 = \Norm{\func{\Im}{\sum_{j = 1}^{n} r_{j} \cdot \partial_{j}\big(\Inner{X}{Y}_{g}\big)}}_{A_{\Theta}}
 \leq 2 \DArg{g}{X} \DArg{g}{Y}.
\end{gather*}
Therefore, as $ r_{1},\ldots,r_{n} \in \R $ are arbitrary subject to $ \func{N}{r_{1},\ldots,r_{n}} \leq 1 $, an application of (\ref{An Alternative Expression for Rieffel's Lipschitz Seminorm}) yields
\begin{gather*}
 \max\big( \L\big( \Re\big(\Inner{X}{Y}_{g}\big)\big), \L\big( \Im\big(\Inner{X}{Y}_{g}\big)\big)\big) \leq H\big(\DArg{g}{X},\DArg{g}{Y}\big)
\end{gather*}
as required.
\end{proof}

\begin{Prop} \label{A Pre-Compactness Result} $ \D_{g}^{1} $ is $ \Norm{\cdot}_{g} $-pre-compact in $ \chi_{\Theta} $.
\end{Prop}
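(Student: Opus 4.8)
The plan is to reduce to the standard norm and then run a Rieffel-type compactness argument on the components of $ X $. By Lemma~\ref{Norms Associated to Different Matrices Are Equivalent}, the norms $ \Norm{\cdot}_{g} $ and $ \Norm{\cdot}_{\st} $ are equivalent on $ \chi_{\Theta} $, and total boundedness is unaffected by passing to an equivalent norm, so it suffices to show that $ \D_{g}^{1} $ is $ \Norm{\cdot}_{\st} $-pre-compact. Fix $ X \in \D_{g}^{1} $ and write $ X = \sum_{k = 1}^{n} a_{k} \bullet \e{k} $ with $ a_{1},\ldots,a_{n} \in \Smooth{A_{\Theta}} $. Since $ \DArg{g}{X} \leq 1 $ forces $ \Norm{X}_{g} \leq 1 $, the equivalence of norms gives a constant $ C_{0} $, independent of $ X $, with $ \Norm{X}_{\st} \leq C_{0} $; because $ \Norm{X}_{\st}^{2} = \Norm{\sum_{k} a_{k} a_{k}^{\ast}}_{A_{\Theta}} \geq \Norm{a_{k} a_{k}^{\ast}}_{A_{\Theta}} = \Norm{a_{k}}_{A_{\Theta}}^{2} $, we obtain $ \Norm{a_{k}}_{A_{\Theta}} \leq C_{0} $ uniformly over $ X \in \D_{g}^{1} $ and $ k \in \SqBr{n} $.

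The crux is to upgrade this to uniform bounds on the derivatives $ \func{\partial_{j}}{a_{k}} $. For each $ j $, the derivation $ \partial_{j} $ has finite positive norm $ \Norm{\partial_{j}}_{\Der{\Theta}} $ in $ \Der{\Theta} $, so by the $ \R $-linearity of $ \nabla^{g} $ in its first argument and the definition of $ \OpNorm{\cdot}_{g} $, we have $ \Norm{\func{\nabla^{g}_{\partial_{j}}}{X}}_{g} \leq \Norm{\partial_{j}}_{\Der{\Theta}} \OpNorm{X}_{g} \leq \Norm{\partial_{j}}_{\Der{\Theta}} $. Applying the Leibniz rule to $ X = \sum_{k} a_{k} \bullet \e{k} $ and rearranging yields
\begin{gather*}
\sum_{k = 1}^{n} \func{\partial_{j}}{a_{k}} \bullet \e{k}
= \func{\nabla^{g}_{\partial_{j}}}{X} - \sum_{k = 1}^{n} a_{k} \bullet \func{\nabla^{g}_{\partial_{j}}}{\e{k}}.
\end{gather*}
The first term on the right is $ \Norm{\cdot}_{g} $-bounded by $ \Norm{\partial_{j}}_{\Der{\Theta}} $, and the second is bounded by $ \sum_{k} \Norm{a_{k}}_{A_{\Theta}} \Norm{\func{\nabla^{g}_{\partial_{j}}}{\e{k}}}_{g} \leq C_{0} \sum_{k} \Norm{\func{\nabla^{g}_{\partial_{j}}}{\e{k}}}_{g} $, a constant depending only on $ g $ (the elements $ \func{\nabla^{g}_{\partial_{j}}}{\e{k}} \in \Smooth{\chi_{\Theta}} $ being fixed). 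Hence $ \big\| \sum_{k} \func{\partial_{j}}{a_{k}} \bullet \e{k} \big\|_{g} $, and therefore its $ \Norm{\cdot}_{\st} $-norm, is uniformly bounded; as in the first paragraph this forces $ \Norm{\func{\partial_{j}}{a_{k}}}_{A_{\Theta}} \leq C_{1} $ uniformly over $ X \in \D_{g}^{1} $ and all $ j,k \in \SqBr{n} $.

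To conclude, I would pass to the self-adjoint and skew-adjoint parts of the components. Write $ a_{k} = \func{\Re}{a_{k}} + i \func{\Im}{a_{k}} $ with $ \func{\Re}{a_{k}},\func{\Im}{a_{k}} \in \sa{\Br{\Smooth{A_{\Theta}}}} $; since each $ \partial_{j} $ is a $ \ast $-derivation it commutes with $ \Re $ and $ \Im $, so $ \Norm{\func{\partial_{j}}{\func{\Re}{a_{k}}}}_{A_{\Theta}} $ and $ \Norm{\func{\partial_{j}}{\func{\Im}{a_{k}}}}_{A_{\Theta}} $ are at most $ C_{1} $. Because $ N $ is equivalent to the $ \ell^{1} $-norm on $ \R^{n} $, the supremum $ M \df \func{\sup}{\Set{\sum_{j = 1}^{n} \abs{r_{j}}}{\func{N}{r_{1},\ldots,r_{n}} \leq 1}} $ is finite, so (\ref{An Alternative Expression for Rieffel's Lipschitz Seminorm}) turns the bounds on the $ \Norm{\func{\partial_{j}}{b}}_{A_{\Theta}} $ into $ \func{\L}{b} \leq C_{1} M $; thus $ \func{\L}{\func{\Re}{a_{k}}} $ and $ \func{\L}{\func{\Im}{a_{k}}} $ are bounded by $ C_{2} \df C_{1} M $, while their $ A_{\Theta} $-norms are bounded by $ C_{0} $. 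Since $ \Pair{A_{\Theta}}{\L} $ is a quantum compact metric space, Rieffel's characterization ensures that the norm-bounded Lipschitz ball $ \Set{b \in \sa{\Br{\Smooth{A_{\Theta}}}}}{\func{\L}{b} \leq C_{2} ~ \text{and} ~ \Norm{b}_{A_{\Theta}} \leq C_{0}} $ is $ \Norm{\cdot}_{A_{\Theta}} $-totally bounded. Consequently each component $ a_{k} $ ranges over a pre-compact subset of $ A_{\Theta} $, and since the map $ \Seq{a_{k}}{k \in \SqBr{n}} \mapsto \sum_{k} a_{k} \bullet \e{k} $ is continuous from $ A_{\Theta}^{n} $ into $ \Pair{\chi_{\Theta}}{\Norm{\cdot}_{\st}} $, the set $ \D_{g}^{1} $ lies in the continuous image of a pre-compact set and is thus $ \Norm{\cdot}_{\st} $-pre-compact.

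I expect the middle step to be the main obstacle: the seminorm $ \OpNorm{\cdot}_{g} $ controls $ \func{\nabla^{g}_{\partial_{j}}}{X} $ rather than the bare derivatives $ \func{\partial_{j}}{a_{k}} $, which the connection entangles with the fixed ``Christoffel'' terms $ \func{\nabla^{g}_{\partial_{j}}}{\e{k}} $. Disentangling them by moving those terms to the other side and absorbing them using the already-established bound $ \Norm{a_{k}}_{A_{\Theta}} \leq C_{0} $ is the key maneuver; once every component and all of its first-order derivatives are uniformly bounded, the pre-compactness is delivered by Rieffel's compactness criterion for $ \Pair{A_{\Theta}}{\L} $.
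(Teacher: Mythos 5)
Your proof is correct and follows essentially the same route as the paper: Leibniz-rule rearrangement to disentangle $ \func{\partial_{j}}{a_{k}} $ from the fixed terms $ \func{\nabla^{g}_{\partial_{j}}}{\e{k}} $, component bounds via Lemma~\ref{Norms Associated to Different Matrices Are Equivalent}, the expression~(\ref{An Alternative Expression for Rieffel's Lipschitz Seminorm}) to get Lipschitz bounds on $ \func{\Re}{a_{k}} $ and $ \func{\Im}{a_{k}} $, and compactness of norm-bounded Lipschitz balls in $ \Pair{A_{\Theta}}{\L} $. The only cosmetic difference is that you treat each $ \partial_{j} $ separately and recombine using the equivalence of $ N $ with the $ \ell^{1} $-norm (paying the constant $ M $), whereas the paper works directly with a general $ \delta = \sum_{j} r_{j} \cdot \partial_{j} $, $ \func{N}{r_{1},\ldots,r_{n}} \leq 1 $, so that~(\ref{An Alternative Expression for Rieffel's Lipschitz Seminorm}) applies without any extra factor.
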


\begin{proof}
Fix $ X = \d \sum_{j = 1}^{n} a_{j} \bullet \e{j} \in \D_{g}^{1} $, where $ a_{1},\ldots,a_{n} \in \Smooth{A_{\Theta}} $.

Choose $ r_{1},\ldots,r_{n} \in \R $ satisfying $ \func{N}{r_{1},\ldots,r_{n}} \leq 1 $, and let $\delta = \sum\limits_{j = 1}^{n} r_{j} \cdot \partial_{j} $. Then by the Leibniz rule,
\begin{align}
 \func{\nabla^{g}_{\delta}}{X}& = \sum_{j = 1}^{n} \func{\nabla^{g}_{\delta}}{a_{j} \bullet \e{j}}
 = \sum_{j = 1}^{n} \SqBr{\func{\delta}{a_{j}} \bullet \e{j} + a_{j} \bullet \func{\nabla^{g}_{\delta}}{\e{j}}} \nonumber \\
& = \sum_{j = 1}^{n} \func{\delta}{a_{j}} \bullet \e{j} + \sum_{j = 1}^{n} a_{j} \bullet \func{\nabla^{g}_{\delta}}{\e{j}}.\label{Leibniz Rule}
\end{align}
Rearranging the terms from both ends of (\ref{Leibniz Rule}) and then taking the norm of both sides, we obtain
\begin{align*}
 \Norm{\sum_{j = 1}^{n} \func{\delta}{a_{j}} \bullet \e{j}}_{g}
& = \Norm{\func{\nabla^{g}_{\delta}}{X} - \sum_{j = 1}^{n} a_{j} \bullet \func{\nabla^{g}_{\delta}}{\e{j}}}_{g}
 \leq \Norm{\func{\nabla^{g}_{\delta}}{X}}_{g} + \sum_{j = 1}^{n} \Norm{a_{j}}_{A_{\Theta}} \Norm{\func{\nabla^{g}_{\delta}}{\e{j}}}_{g} \\
& \leq \OpNorm{X}_{g} \Norm{\delta}_{\Der{\Theta}} +
 \sum_{j = 1}^{n} \Norm{a_{j}}_{A_{\Theta}} \OpNorm{\e{j}}_{g} \Norm{\delta}_{\Der{\Theta}} \\
& \leq 1 + \sum_{j = 1}^{n} \Norm{a_{j}}_{A_{\Theta}} \OpNorm{\e{j}}_{g} \quad
\big(\text{as $ \OpNorm{X}_{g} \leq \DArg{g}{X} \leq 1 $ and $ \Norm{\delta}_{\Der{\Theta}} \leq 1 $}\big).
\end{align*}
As $ \Norm{X}_{g} \leq \DArg{g}{X} \leq 1 $, an application of Lemma~\ref{Norms Associated to Different Matrices Are Equivalent} yields
\begin{gather*}
 \Norm{\sum_{j = 1}^{n} a_{j} \bullet \e{j}}_{\st}
= \Norm{X}_{\st}
\leq \big\|\sqrt{g^{- 1}}\big\|_{\func{M_{n}}{A_{\Theta}}} \Norm{X}_{g}
\leq \big\|\sqrt{g^{- 1}}\big\|_{\func{M_{n}}{A_{\Theta}}}.
\end{gather*}
Hence, $ \Norm{a_{j}}_{A_{\Theta}} \leq \big\|\sqrt{g^{- 1}}\big\|_{\func{M_{n}}{A_{\Theta}}} $ for all $ j \in \SqBr{n} $, so letting $ M \df \max\big(\big\{{\OpNorm{\e{j}}_{g}}\,|\,{j \in \SqBr{n}}\big\}\big)$ gives us
\begin{gather*}
\Norm{\sum_{j = 1}^{n} \func{\delta}{a_{j}} \bullet \e{j}}_{g} \leq 1 + n M \big\|\sqrt{g^{- 1}}\big\|_{\func{M_{n}}{A_{\Theta}}}.
\end{gather*}
It then follows from another application of Lemma~\ref{Norms Associated to Different Matrices Are Equivalent} that
\begin{align*}
 \Norm{\sum_{j = 1}^{n} \func{\delta}{a_{j}} \bullet \e{j}}_{\st}
& \leq \big\|\sqrt{g^{- 1}}\big\|_{\func{M_{n}}{A_{\Theta}}} \Norm{\sum_{j = 1}^{n} \func{\delta}{a_{j}} \bullet \e{j}}_{g} \\
& \leq \big\|\sqrt{g^{- 1}}\big\|_{\func{M_{n}}{A_{\Theta}}} \Br{1 + n M \big\|\sqrt{g^{- 1}}\big\|_{\func{M_{n}}{A_{\Theta}}}}.
\end{align*}
As $ \func{\Re}{\func{\delta}{a_{j}}} = \func{\delta}{\func{\Re}{a_{j}}} $ and $ \func{\Im}{\func{\delta}{a_{j}}} = \func{\delta}{\func{\Im}{a_{j}}} $ for all $ j \in \SqBr{n} $, we have
\begin{align*}
\forall\, j \in \SqBr{n}\colon \quad
 \Norm{\func{\delta}{\func{\Re}{a_{j}}}}_{A_{\Theta}},\Norm{\func{\delta}{\func{\Im}{a_{j}}}}_{A_{\Theta}}
& \leq \Norm{\func{\delta}{a_{j}}}_{A_{\Theta}} \\
& \leq \big\|\sqrt{g^{- 1}}\big\|_{\func{M_{n}}{A_{\Theta}}} \Br{1 + n M \big\|\sqrt{g^{- 1}}\big\|_{\func{M_{n}}{A_{\Theta}}}}.
\end{align*}
Recalling that $ r_{1},\ldots,r_{n} \in \R $ are arbitrary subject to $ \func{N}{r_{1},\ldots,r_{n}} \leq 1 $, an application of (\ref{An Alternative Expression for Rieffel's Lipschitz Seminorm}) yields
\begin{gather*}
\forall\, j \in \SqBr{n}\colon \quad
 \func{\L}{\func{\Re}{a_{j}}},\func{\L}{\func{\Im}{a_{j}}}
\leq \big\|\sqrt{g^{- 1}}\big\|_{\func{M_{n}}{A_{\Theta}}} \Br{1 + n M \big\|\sqrt{g^{- 1}}\big\|_{\func{M_{n}}{A_{\Theta}}}}.
\end{gather*}
Consequently,
\begin{gather*}
\forall\, j \in \SqBr{n}\colon \\
 \func{\Re}{a_{j}},\func{\Im}{a_{j}}
\in \left\{{a \in \Dom{\L}}\Big|
 { \func{\L}{a},\Norm{a}_{A_{\Theta}}
 \leq \big\|\sqrt{g^{- 1}}\big\|_{\func{M_{n}}{A_{\Theta}}} \Br{1 + n M \big\|\sqrt{g^{- 1}}\big\|_{\func{M_{n}}{A_{\Theta}}}} }\right\},
\end{gather*}
where the object on the right-hand side is a $ \Norm{\cdot}_{A_{\Theta}} $-compact subset of $ A_{\Theta} $ (see \cite[Remark~2.46]{L2}). Hence, as $ X \in \D_{g}^{1} $ is arbitrary, we have shown that there exists a single $ \Norm{\cdot}_{A_{\Theta}} $-compact subset of $ A_{\Theta} $ that contains the $ A_{\Theta} $-coefficients of all elements of $ \D_{g}^{1} $, which implies that $ \D_{g}^{1} $ is $ \Norm{\cdot}_{\st} $-pre-compact in $ \chi_{\Theta} $. However, $ \Norm{\cdot}_{\st} $ and $ \Norm{\cdot}_{g} $ are equivalent by Lemma~\ref{Norms Associated to Different Matrices Are Equivalent}, so $ \D_{g}^{1} $ is $ \Norm{\cdot}_{g} $-pre-compact in $ \chi_{\Theta} $.
\end{proof}

\begin{Def}[\cite{M}] \label{Minkowski Gauge Functional}
Let $ A $ be a normed $ \C $-vector space and $ C $ a circled convex subset of $ A $. The \emph{Minkowski gauge functional} associated to $ C $ is then the function $ p $ that satisfies
\begin{gather*}
\Dom{p} = \Set{a \in A}{\text{there exists an} ~ r \in \R_{> 0} ~ \text{such that} ~ a \in r \cdot C}; \\
\forall\, a \in \Dom{p}\colon \quad \func{p}{a} \df \func{\inf}{\Set{r \in \R_{> 0}}{a \in r \cdot C}}.
\end{gather*}
\end{Def}

Now, let $ \DM{g} $ denote the Minkowski gauge functional associated to $ \Cl{\D_{g}^{1}}{\Norm{\cdot}_{g}} $. Our objective is to show that $ \big(\chi_{\Theta},\Inn_{g},\DM{g},A_{\Theta},\L\big) $ is an $ \Trip{F}{G}{H} $-metrized quantum vector bundle, where~$ F $ is Leibniz, and $ G $ and $ H $ are defined as in Propositions~\ref{The G-Inequality} and~\ref{The H-Inequality} respectively. Before proceeding further, let us first collect some facts about Minkowski gauge functionals.

\begin{Lem} \label{Some Facts About Minkowski Gauge Functionals}
Let $ A $ be a normed $ \C $-vector space, $ B $ a $ \C $-linear subspace of $ A $, and $ L $ a seminorm on $ B $. Denote by $ L_{\M} $ the Minkowski gauge functional associated to $ \Cl{L^{1}}{\Norm{\cdot}_{A}} $. Then the following statements hold:
\begin{enumerate}\itemsep=0pt
\item[{\rm 1)}] $ L_{\M}^{r} = \Cl{L^{r}}{\Norm{\cdot}_{A}} $ for all $ r \in \R_{> 0} $,
\item[{\rm 2)}] $ B \subseteq \Dom{L_{\M}} $ and $ \func{L_{\M}}{b} \leq \func{L}{b} $ for all $ b \in B $,
\item[{\rm 3)}] if $ a \in \Dom{L_{\M}} $, then there is a sequence $ \Seq{b_{k}}{k \in \N} $ in $ B $ such that
\begin{itemize}\itemsep=0pt
\item $ \func{L}{b_{k}} \to \func{L_{\M}}{a} $ and
\item $ b_{k} \to a $ with respect to $ \Norm{\cdot}_{A} $,
\end{itemize}
\item[{\rm 4)}] $ L \subseteq L_{\M} $ if $ L $ is lower-semicontinuous on $ \Pair{B}{\Norm{\cdot}_{B}} $, where $ \Norm{\cdot}_{B} $ denotes the restriction of~$ \Norm{\cdot}_{A} $ to~$ B $.
\end{enumerate}
\end{Lem}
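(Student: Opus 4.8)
The plan is to dispatch the four assertions in order, extracting the later ones from the earlier ones, with essentially all the work in assertion (1). Write $ C \df \Cl{L^{1}}{\Norm{\cdot}_{A}} $ for the set defining $ L_{\M} $; it is circled and convex because the seminorm unit ball $ L^{1} $ is circled and convex and the $ \Norm{\cdot}_{A} $-closure preserves both (and retains $ 0_{A} $). Two scaling facts drive everything. First, since $ L $ is a seminorm, $ L^{r} = r \cdot L^{1} $, and as $ x \mapsto r \cdot x $ is a homeomorphism of $ A $ for $ r > 0 $, we get $ \Cl{L^{r}}{\Norm{\cdot}_{A}} = r \cdot C $. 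Second, for $ 0 < s \leq s' $ the circledness of $ C $ gives $ s \cdot C \subseteq s' \cdot C $, because $ \Br{s / s'} \cdot C \subseteq C $.

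With these in hand I would prove assertion (1) by establishing $ \Set{a}{\func{L_{\M}}{a} \leq r} = r \cdot C $ for $ r > 0 $. The inclusion $ \supseteq $ is immediate, since $ a \in r \cdot C $ exhibits $ r $ as an admissible scale. For $ \subseteq $, if $ \func{L_{\M}}{a} \leq r $ then for each $ \epsilon > 0 $ there is $ s < r + \epsilon $ with $ a \in s \cdot C $, so $ a \in \Br{r + \epsilon} \cdot C $ by the monotonicity fact; thus $ a / \Br{r + \epsilon} \in C $, and letting $ \epsilon \to 0 $ the closedness of $ C $ forces $ a / r \in C $, i.e., $ a \in r \cdot C $. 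Combined with $ \Cl{L^{r}}{\Norm{\cdot}_{A}} = r \cdot C $ this is assertion (1). Assertion (2) then follows at once: for $ b \in B $ and $ \epsilon > 0 $ one has $ \func{L}{b / \Br{\func{L}{b} + \epsilon}} \leq 1 $, so $ b \in L^{\func{L}{b} + \epsilon} \subseteq L_{\M}^{\func{L}{b} + \epsilon} $ by assertion (1); hence $ b \in \Dom{L_{\M}} $ and $ \func{L_{\M}}{b} \leq \func{L}{b} + \epsilon $, and $ \epsilon \to 0 $ gives $ \func{L_{\M}}{b} \leq \func{L}{b} $.

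For assertion (3), set $ r \df \func{L_{\M}}{a} $. For each $ k \in \N $, assertion (1) at radius $ r + 1/k $ gives $ a \in \Cl{L^{r + 1/k}}{\Norm{\cdot}_{A}} $, so I pick $ b_{k} \in L^{r + 1/k} $ with $ \Norm{b_{k} - a}_{A} < 1/k $; then $ b_{k} \to a $ and $ \func{L}{b_{k}} \leq r + 1/k $, whence $ \limsup_{k} \func{L}{b_{k}} \leq r $. The matching lower bound is the crux: for any sequence $ \Seq{c_{k}}{k \in \N} $ in $ B $ with $ c_{k} \to a $, I claim $ \liminf_{k} \func{L}{c_{k}} \geq r $. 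Indeed, if this liminf $ \rho $ were finite with $ \rho < r $, then along a subsequence $ \func{L}{c_{k}} < \rho + \epsilon < r $ eventually for small $ \epsilon > 0 $; those $ c_{k} $ lie in $ L^{\rho + \epsilon} $ and converge to $ a $, so assertion (1) gives $ a \in \Cl{L^{\rho + \epsilon}}{\Norm{\cdot}_{A}} = L_{\M}^{\rho + \epsilon} $, i.e., $ \func{L_{\M}}{a} \leq \rho + \epsilon < r $, contradicting $ \func{L_{\M}}{a} = r $. Applying the claim to the sequence just built forces $ \func{L}{b_{k}} \to r = \func{L_{\M}}{a} $, which is assertion (3).

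Finally, assertion (4) combines (2) and (3). Reading $ L \subseteq L_{\M} $ as inclusion of graphs, and knowing $ B \subseteq \Dom{L_{\M}} $ from (2), I need only $ \func{L_{\M}}{b} = \func{L}{b} $ for each $ b \in B $. Applying (3) with $ a = b $ yields a sequence $ \Seq{b_{k}}{k \in \N} $ in $ B $ with $ b_{k} \to b $ in $ \Norm{\cdot}_{B} $ and $ \func{L}{b_{k}} \to \func{L_{\M}}{b} $; lower-semicontinuity of $ L $ then gives $ \func{L}{b} \leq \liminf_{k} \func{L}{b_{k}} = \func{L_{\M}}{b} $, which with the reverse inequality from (2) yields equality. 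The main obstacle is precisely the liminf lower bound in (3): the easy upper bound merely approximates $ a $ from within $ L^{r} $, whereas the lower bound requires turning $ L_{\M} $-sublevel sets back into $ \Norm{\cdot}_{A} $-closures of $ L $-sublevel sets, which is exactly the service performed by assertion (1).
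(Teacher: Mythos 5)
Your proof is correct. It is worth noting that the paper itself does not prove this lemma at all: its ``proof'' is the single line citing \cite[Proposition~4.4]{Ri2} and \cite[Section~3]{Ri4}, deferring entirely to Rieffel's theory of closures of seminorms and their Minkowski gauges. Your argument is therefore a genuinely different route in the sense that it is a self-contained replacement for that outsourced material. Its pivot is the identity $ L_{\M}^{r} = r \cdot \Cl{L^{1}}{\Norm{\cdot}_{A}} = \Cl{L^{r}}{\Norm{\cdot}_{A}} $, which you prove correctly from three elementary facts: the closed unit ball $ C $ is circled, convex and closed; $ s \cdot C \subseteq s' \cdot C $ for $ 0 < s \leq s' $; and $ a / \Br{r + \epsilon} \to a / r $ in norm, so closedness of $ C $ upgrades $ \func{L_{\M}}{a} \leq r $ to $ a \in r \cdot C $. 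The remaining assertions then do fall out as you say: (2) from $ L^{r} \subseteq \Cl{L^{r}}{\Norm{\cdot}_{A}} $; (3) by combining the diagonal approximation (giving $ \limsup_{k} \func{L}{b_{k}} \leq \func{L_{\M}}{a} $, valid also in the edge case $ \func{L_{\M}}{a} = 0 $) with the liminf lower bound, whose proof correctly uses assertion (1) to convert membership in a closure back into a gauge estimate; and (4) by feeding the sequence from (3) into lower semicontinuity. Your reading of $ L \subseteq L_{\M} $ as graph inclusion also matches how the paper later uses (4), namely that $ \DM{g} $ would \emph{extend} $ \D_{g} $. What the paper's citation buys is brevity and a pointer to the general framework (Rieffel's closed seminorms, where these statements sit inside a broader theory); what your proof buys is that the lemma is verifiable in place, which is genuinely useful here because the paper's subsequent propositions repeatedly invoke the ``closure clause implicit in (1)''---precisely the set identity your argument makes explicit.
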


\begin{proof} See \cite[Proposition 4.4]{Ri2} and \cite[Section~3]{Ri4}.
\end{proof}

With Lemma~\ref{Some Facts About Minkowski Gauge Functionals}, we can make the following observations:
\begin{itemize}\itemsep=0pt
\item
$ \DM{g} $ satisfies property (3) of Definition~\ref{Metrized Quantum Vector Bundle}.

We must first establish that $ \DM{g} $ is a norm on its domain. Let $ X \in \Dom{\DM{g}} $ satisfy $ \DMArg{g}{X} = 0 $. Then (3) of Lemma~\ref{Some Facts About Minkowski Gauge Functionals} tells us that there is a sequence $ \Seq{X_{k}}{k \in \N} $ in $ \Smooth{\chi_{\Theta}} $ such that $ \DArg{g}{X_{k}} \to \DMArg{g}{X} = 0 $ and $ X_{k} \to X $ with respect to $ \Norm{\cdot}_{g} $, but $ \Norm{X_{k}}_{g} \leq \DArg{g}{X_{k}} $ by construction for every $ k \in \N $, so $ \Norm{X_{k}}_{g} \to 0 $, or equivalently, $ X_{k} \to 0_{\chi_{\Theta}} $ with respect to $ \Norm{\cdot}_{g} $. Therefore, $ X = 0_{\chi_{\Theta}} $, but as Minkowski gauge functionals are already absolutely homogeneous, subadditive and non-negative, $ \DM{g} $ is indeed a norm on its domain.

Next, we have by (2) of Lemma~\ref{Some Facts About Minkowski Gauge Functionals} that $ \Smooth{\chi_{\Theta}} \subseteq \Dom{\DM{g}} $. As $ \Smooth{\chi_{\Theta}} $ is a $ \Norm{\cdot}_{g} $-dense $ \C $-linear subspace of $ \chi_{\Theta} $, so is $ \Dom{\DM{g}} $.

\item $ \DM{g} $ satisfies property (4) of Definition~\ref{Metrized Quantum Vector Bundle}.

Let $ X \in \Dom{\DM{g}} $. Then (3) of Lemma~\ref{Some Facts About Minkowski Gauge Functionals} tells us that there is a sequence $ \Seq{X_{k}}{k \in \N} $ in $ \Smooth{\chi_{\Theta}} $ such that $ \DArg{g}{X_{k}} \to \DMArg{g}{X} $ and $ X_{k} \to X $ with respect to $ \Norm{\cdot}_{g} $, but, as before, $ \Norm{X_{k}}_{g} \leq \DArg{g}{X_{k}} $ by construction for every $ k \in \N $, so $ \Norm{X}_{g} \leq \DMArg{g}{X} $.

\item $ \DM{g} $ satisfies property (5) of Definition~\ref{Metrized Quantum Vector Bundle}.

By (1) of Lemma~\ref{Some Facts About Minkowski Gauge Functionals}, we have $ \DM{g}^{1} = \Cl{\D_{g}^{1}}{\Norm{\cdot}_{g}} $, so $ \DM{g}^{1} $ is $ \Norm{\cdot}_{g} $-compact in $ \chi_{\Theta} $ by Proposition~\ref{A Pre-Compactness Result}.
\end{itemize}

It remains to verify properties (6) and (7) of Definition~\ref{Metrized Quantum Vector Bundle}, which we now turn our attention to.

\begin{Prop} Let $ a \in \Dom{\L} $ and $ X \in \Dom{\DM{g}} $. Then $ a \bullet X \in \Dom{\DM{g}} $ and
\begin{gather*}
\DMArg{g}{a \bullet X} \leq G\big(\Norm{a}_{A_{\Theta}},\func{\L}{a},\DMArg{g}{X}\big),
\end{gather*}
where $ G\colon \CO{0}{\infty}^{3} \to \CO{0}{\infty} $ is defined as in Proposition~{\rm \ref{The G-Inequality}}.
\end{Prop}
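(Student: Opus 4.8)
The plan is to reduce the statement to Proposition~\ref{The G-Inequality} by approximating both $ a $ and $ X $ by smooth elements and then passing to the limit, exploiting the lower semicontinuity of the Minkowski gauge functional $ \DM{g} $. Proposition~\ref{The G-Inequality} already supplies the desired bound whenever $ a \in \sa{\Br{\Smooth{A_{\Theta}}}} $ and $ X \in \Smooth{\chi_{\Theta}} $, so the entire task is to remove the smoothness hypotheses on $ a $ and $ X $. Note that for a general $ a \in \Dom{\L} $ the element $ a \bullet X $ need not lie in $ \Smooth{\chi_{\Theta}} $, which is exactly why we must pass to the extended functional $ \DM{g} $.

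First I would approximate $ X $. Since $ X \in \Dom{\DM{g}} $, part (3) of Lemma~\ref{Some Facts About Minkowski Gauge Functionals} furnishes a sequence $ \Seq{X_{k}}{k \in \N} $ in $ \Smooth{\chi_{\Theta}} $ with $ \DArg{g}{X_{k}} \to \DMArg{g}{X} $ and $ X_{k} \to X $ in $ \Norm{\cdot}_{g} $. Next I would approximate $ a $ by mollifying along the torus action: for non-negative, real-valued, smooth approximate identities $ \phi_{k} $ concentrating at $ \bm{0} $, set $ a_{k} \df \int_{\T^{n}} \func{\phi_{k}}{\bm{t}} \cdot \alph{\bm{t}}{a} \, \mathrm{d}\bm{t} $. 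The properties to verify are that each $ a_{k} $ lies in $ \sa{\Br{\Smooth{A_{\Theta}}}} $ (smoothness from convolution with a smooth function, self-adjointness from $ a = a^{\ast} $ and $ \phi_{k} $ real), that $ \Norm{a_{k}}_{A_{\Theta}} \to \Norm{a}_{A_{\Theta}} $ and $ a_{k} \to a $ in $ \Norm{\cdot}_{A_{\Theta}} $ (norm continuity of $ \alpha $), and crucially that $ \func{\L}{a_{k}} \leq \func{\L}{a} $. This last bound follows from the commutativity of the $ \T^{n} $-action: $ \alph{\bm{s}}{a_{k}} - a_{k} = \int_{\T^{n}} \func{\phi_{k}}{\bm{t}} \cdot \alph{\bm{t}}{\alph{\bm{s}}{a} - a} \, \mathrm{d}\bm{t} $, whence $ \Norm{\alph{\bm{s}}{a_{k}} - a_{k}}_{A_{\Theta}} \leq \Norm{\alph{\bm{s}}{a} - a}_{A_{\Theta}} $, and dividing by $ \func{\ell}{\bm{s}} $ and taking the supremum gives the claim.

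With both approximations in hand I would run the limiting argument. Since $ a_{k} $ is smooth and $ X_{k} \in \Smooth{\chi_{\Theta}} $, we have $ a_{k} \bullet X_{k} \in \Smooth{\chi_{\Theta}} \subseteq \Dom{\DM{g}} $, and Proposition~\ref{The G-Inequality} together with part (2) of Lemma~\ref{Some Facts About Minkowski Gauge Functionals} gives
\begin{gather*}
\DMArg{g}{a_{k} \bullet X_{k}} \leq \DArg{g}{a_{k} \bullet X_{k}} \leq \func{G}{\Norm{a_{k}}_{A_{\Theta}},\func{\L}{a_{k}},\DArg{g}{X_{k}}}.
\end{gather*}
Using the inequality $ \Norm{b \bullet Y}_{g} \leq \Norm{b}_{A_{\Theta}} \Norm{Y}_{g} $ twice (as in the proof of Proposition~\ref{The G-Inequality}), one checks that $ a_{k} \bullet X_{k} \to a \bullet X $ in $ \Norm{\cdot}_{g} $. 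On the right-hand side, $ \func{G}{x,y,z} = \Br{3 x + y} z $ is monotone, and the bound $ \func{\L}{a_{k}} \leq \func{\L}{a} $ combined with $ \Norm{a_{k}}_{A_{\Theta}} \to \Norm{a}_{A_{\Theta}} $ and $ \DArg{g}{X_{k}} \to \DMArg{g}{X} $ yields $ \limsup_{k \to \infty} \func{G}{\Norm{a_{k}}_{A_{\Theta}},\func{\L}{a_{k}},\DArg{g}{X_{k}}} \leq \func{G}{\Norm{a}_{A_{\Theta}},\func{\L}{a},\DMArg{g}{X}} $. Finally, part (1) of Lemma~\ref{Some Facts About Minkowski Gauge Functionals} shows that the sublevel sets $ \DM{g}^{r} = \Cl{\D_{g}^{r}}{\Norm{\cdot}_{g}} $ are $ \Norm{\cdot}_{g} $-closed, so $ \DM{g} $ is lower semicontinuous; applying this to $ a_{k} \bullet X_{k} \to a \bullet X $ gives $ \DMArg{g}{a \bullet X} \leq \liminf_{k \to \infty} \DMArg{g}{a_{k} \bullet X_{k}} $. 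Chaining these inequalities proves both that $ a \bullet X \in \Dom{\DM{g}} $ (the bound is finite) and the asserted estimate.

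I expect the main obstacle to be the construction and verification of the smooth approximants $ a_{k} $ — in particular establishing $ \func{\L}{a_{k}} \leq \func{\L}{a} $ — since $ a \in \Dom{\L} $ need not be smooth while Proposition~\ref{The G-Inequality} applies only to smooth $ a $; the torus-averaging argument, relying on the commutativity of $ \alpha $ and the difference-quotient definition of $ \L $, is the technical heart. The subsequent passage to the limit is then routine, the only care being to respect the directions of the inequalities ($ \limsup $ on the controlled side, lower semicontinuity on the $ \DM{g} $ side).
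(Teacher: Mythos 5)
Your proposal is correct and follows essentially the same route as the paper's own proof: approximate $X$ via part (3) of Lemma~\ref{Some Facts About Minkowski Gauge Functionals}, mollify $a$ along the $\T^{n}$-action with non-negative smooth kernels to obtain self-adjoint smooth approximants satisfying $\func{\L}{a_{k}} \leq \func{\L}{a}$, apply Proposition~\ref{The G-Inequality}, and pass to the limit using the closedness of the sublevel sets $\DM{g}^{r} = \Cl{\D_{g}^{r}}{\Norm{\cdot}_{g}}$. The only differences are cosmetic: the paper uses a net of mollifiers and an $\epsilon$-argument with the closure clause of part (1) of the Lemma, where you use a sequence and phrase the same fact as lower semicontinuity of $\DM{g}$.
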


\begin{proof}By (3) of Lemma~\ref{Some Facts About Minkowski Gauge Functionals}, there is a sequence $ \Seq{X_{k}}{k \in \N} $ in $ \Smooth{\chi_{\Theta}} $ such that
\begin{itemize}\itemsep=0pt
\item $ \DArg{g}{X_{k}} \to \DMArg{g}{X} $ and
\item $ X_{k} \to X $ with respect to $ \Norm{\cdot}_{g} $.
\end{itemize}
Let $ \mu $ denote the normalized Haar measure on $ \T^{n} $. It is a well-known result of the theory of smooth vectors for a strongly continuous Lie-group action on a $ C^{\ast} $-algebra that a net $ \Seq{f_{\nu}}{\nu \in \mathcal{N}} $ in $ \CcS{\T^{n}} $ exists such that
\begin{enumerate}\itemsep=0pt
\item[(a)] $ f_{\nu} $ is non-negative and $\d \Int{\T^{n}}{\func{f_{\nu}}{\bm{s}}}{\mu}{\bm{s}} = 1 $ for all $ \nu \in \mathcal{N} $,
\item[(b)] $\d a_{\nu} \df \Int{\T^{n}}{\func{f_{\nu}}{\bm{s}} \cdot \alph{\bm{s}}{a}}{\mu}{\bm{s}} \in \Smooth{A_{\Theta}} $ for all $ \nu \in \mathcal{N} $, and
\item[(c)] $ a_{\nu} \to a $ with respect to $ \Norm{\cdot}_{A_{\Theta}} $.
\end{enumerate}
In fact, as $ f_{\nu} $ is non-negative for all $ \nu \in \mathcal{N} $ and as self-adjoint elements of a $ C^{\ast} $-algebra are preserved under $ \ast $-homomorphisms, (b) can be strengthened to say that $ a_{\nu} \in \sa{\Br{\Smooth{A_{\Theta}}}} $ for all $ \nu \in \mathcal{N} $. Proceeding, we have for all $ \nu \in \mathcal{N} $ and $ \bm{t} \in \T^{n} \setminus \SSet{\bm{0}} $ that
\begin{align*}
 \frac{\Norm{\alph{\bm{t}}{a_{\nu}} - a_{\nu}}_{A_{\Theta}}}{\func{\ell}{\bm{t}}}
& = \frac{\d
 \Norm{
 \alph{\bm{t}}{\Int{\T^{n}}{\func{f_{\nu}}{\bm{s}} \cdot \alph{\bm{s}}{a}}{\mu}{\bm{s}}} -
 \Int{\T^{n}}{\func{f_{\nu}}{\bm{s}} \cdot \alph{\bm{s}}{a}}{\mu}{\bm{s}}
 }_{A_{\Theta}}
 }
 {\func{\ell}{\bm{t}}} \\
& = \frac{\d
 \Norm{
 \Int{\T^{n}}{\func{f_{\nu}}{\bm{s}} \cdot \alph{\bm{t} + \bm{s}}{a}}{\mu}{\bm{s}} -
 \Int{\T^{n}}{\func{f_{\nu}}{\bm{s}} \cdot \alph{\bm{s}}{a}}{\mu}{\bm{s}}
 }_{A_{\Theta}}
 }
 {\func{\ell}{\bm{t}}} \\
& = \frac{\d
 \Norm{
 \Int{\T^{n}}{\func{f_{\nu}}{\bm{s}} \cdot \alph{\bm{s} + \bm{t}}{a}}{\mu}{\bm{s}} -
 \Int{\T^{n}}{\func{f_{\nu}}{\bm{s}} \cdot \alph{\bm{s}}{a}}{\mu}{\bm{s}}
 }_{A_{\Theta}}
 }
 {\func{\ell}{\bm{t}}} \\
& = \frac{\Norm{\d \Int{\T^{n}}{\func{f_{\nu}}{\bm{s}} \cdot \alph{\bm{s}}{\alph{\bm{t}}{a} - a}}{\mu}{\bm{s}}}_{A_{\Theta}}}
 {\func{\ell}{\bm{t}}} \\
& \leq \frac{\d \Int{\T^{n}}{\Norm{\func{f_{\nu}}{\bm{s}} \cdot \alph{\bm{s}}{\alph{\bm{t}}{a} - a}}_{A_{\Theta}}}{\mu}{\bm{s}}}
 {\func{\ell}{\bm{t}}} \\
& = \frac{\d \Int{\T^{n}}{\func{f_{\nu}}{\bm{s}} \Norm{\alph{\bm{s}}{\alph{\bm{t}}{a} - a}}_{A_{\Theta}}}{\mu}{\bm{s}}}{\func{\ell}{\bm{t}}}
 \quad
 \Br{\text{as $ f_{\nu} $ is non-negative}} \\
& = \frac{\d \Int{\T^{n}}{\func{f_{\nu}}{\bm{s}} \Norm{\alph{\bm{t}}{a} - a}_{A_{\Theta}}}{\mu}{\bm{s}}}{\func{\ell}{\bm{t}}} \quad
 \Br{\text{as $ \alpha_{\bm{s}} $ is isometric}} \\
& = \frac{\Norm{\alph{\bm{t}}{a} - a}_{A_{\Theta}}}{\func{\ell}{\bm{t}}} \quad
 \Br{\text{as $ \Int{\T^{n}}{\func{f_{\nu}}{\bm{s}}}{\mu}{\bm{s}} = 1 $}}.
\end{align*}
Consequently, $ a_{\nu} \in \Dom{\L} $ and $ \func{\L}{a_{\nu}} \leq \func{\L}{a} $ for all $ \nu \in \mathcal{N} $. Invoking Proposition~\ref{The G-Inequality}, we get
\begin{align*}
\forall\, \nu \in \mathcal{N}, ~ \forall\, k \in \N\colon \quad
 \DArg{g}{a_{\nu} \bullet X_{k}}& \leq G\big(\Norm{a_{\nu}}_{A_{\Theta}},\func{\L}{a_{\nu}},\DArg{g}{X_{k}}\big)\\
&\leq G\big(\Norm{a_{\nu}}_{A_{\Theta}},\func{\L}{a},\DArg{g}{X_{k}}\big),
\end{align*}
where the last inequality is due to the fact that $ G $ is non-decreasing in each argument. By (2) of Lemma~\ref{Some Facts About Minkowski Gauge Functionals}, $ Y \in \Dom{\DM{g}} $ and $ \DMArg{g}{Y} \leq \DArg{g}{Y} $ for all $ Y \in \Smooth{\chi_{\Theta}} $, so
\begin{gather*}
\forall\, \nu \in \mathcal{N}, ~ \forall\, k \in \N\colon \quad
\DMArg{g}{a_{\nu} \bullet X_{k}} \leq G\big(\Norm{a_{\nu}}_{A_{\Theta}},\func{\L}{a},\DArg{g}{X_{k}}\big).
\end{gather*}
Let $ \epsilon > 0 $. As the right-hand side of this inequality converges to $ G\big(\Norm{a}_{A_{\Theta}},\func{\L}{a},\DMArg{g}{X}\big) $ by the continuity of $ G $, we find for all $ \nu \in \mathcal{N} $ and $ k \in \N $ sufficiently large that
\begin{gather*}
\DMArg{g}{a_{\nu} \bullet X_{k}} \leq \func{G}{\Norm{a}_{A_{\Theta}},\func{\L}{a},\DMArg{g}{X}} + \epsilon.
\end{gather*}
As $ a_{\nu} \bullet X_{k} \to a \bullet X $ with respect to $ \Norm{\cdot}_{g} $, it follows from the closure clause implicit in (1) of Lemma~\ref{Some Facts About Minkowski Gauge Functionals} that $ a \bullet X \in \Dom{\DM{g}} $ and
\begin{gather*}
\DMArg{g}{a \bullet X} \leq G\big(\Norm{a}_{A_{\Theta}},\func{\L}{a},\DMArg{g}{X}\big) + \epsilon.
\end{gather*}
However, $ \epsilon > 0 $ is arbitrary, so
\begin{gather*}
\DMArg{g}{a \bullet X} \leq G \big(\Norm{a}_{A_{\Theta}},\func{\L}{a},\DMArg{g}{X}\big)
\end{gather*}
as required.
\end{proof}

\begin{Prop}Let $ X,Y \in \Dom{\DM{g}} $. Then $ \Re\big(\Inner{X}{Y}_{g}\big), \Im\big(\Inner{X}{Y}_{g}\big) \in \Dom{\L} $ and
\begin{gather*}
 \max\big( \L\big( \Re\big(\Inner{X}{Y}_{g}\big)\big), \L\big( \Im\big(\Inner{X}{Y}_{g}\big)\big)\big) \leq H\big(\DMArg{g}{X},\DMArg{g}{Y}\big),
\end{gather*}
where $ H\colon \CO{0}{\infty}^{2} \to \CO{0}{\infty} $ is defined as in Proposition~{\rm \ref{The H-Inequality}}.
\end{Prop}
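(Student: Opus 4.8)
The plan is to reduce to the smooth-vector case already handled in Proposition~\ref{The H-Inequality} and then transport the inequality across a limit by exploiting the lower-semicontinuity of $\L$. First I would use (3) of Lemma~\ref{Some Facts About Minkowski Gauge Functionals} to select sequences $\Seq{X_k}{k \in \N}$ and $\Seq{Y_k}{k \in \N}$ in $\Smooth{\chi_{\Theta}}$ with $\DArg{g}{X_k} \to \DMArg{g}{X}$ and $\DArg{g}{Y_k} \to \DMArg{g}{Y}$, and with $X_k \to X$ and $Y_k \to Y$ with respect to $\Norm{\cdot}_{g}$. Proposition~\ref{The H-Inequality} applies to each smooth pair $\Pair{X_k}{Y_k}$ and gives
\begin{gather*}
\max\big(\L\big(\Re\big(\Inner{X_k}{Y_k}_{g}\big)\big),\L\big(\Im\big(\Inner{X_k}{Y_k}_{g}\big)\big)\big) \leq 2 \DArg{g}{X_k} \DArg{g}{Y_k},
\end{gather*}
whose right-hand side converges to $2 \DMArg{g}{X} \DMArg{g}{Y} = \func{H}{\DMArg{g}{X},\DMArg{g}{Y}}$.

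Next I would show that the inner products themselves converge in $A_{\Theta}$-norm. Since $\Pair{\chi_{\Theta}}{\Inn_{g}}$ is a Hilbert $A_{\Theta}$-module, the Cauchy--Schwarz inequality $\Norm{\Inner{\xi}{\eta}_{g}}_{A_{\Theta}} \leq \Norm{\xi}_{g} \Norm{\eta}_{g}$ is available. Decomposing $\Inner{X_k}{Y_k}_{g} - \Inner{X}{Y}_{g} = \Inner{X_k - X}{Y_k}_{g} + \Inner{X}{Y_k - Y}_{g}$ and using that $\Norm{Y_k}_{g}$ is bounded (it converges to $\Norm{Y}_{g}$), I obtain $\Inner{X_k}{Y_k}_{g} \to \Inner{X}{Y}_{g}$ with respect to $\Norm{\cdot}_{A_{\Theta}}$. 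Because taking real and imaginary parts is $\Norm{\cdot}_{A_{\Theta}}$-continuous, it follows that $\Re\big(\Inner{X_k}{Y_k}_{g}\big) \to \Re\big(\Inner{X}{Y}_{g}\big)$ and $\Im\big(\Inner{X_k}{Y_k}_{g}\big) \to \Im\big(\Inner{X}{Y}_{g}\big)$ in $A_{\Theta}$-norm.

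Finally I would pass to the limit using property (5) of Definition~\ref{Quasi-Leibniz Quantum Compact Metric Space}, which states that $\L^{r}$ is $\Norm{\cdot}_{A_{\Theta}}$-closed for each $r \in \R_{>0}$. Given $\epsilon > 0$, set $r = \func{H}{\DMArg{g}{X},\DMArg{g}{Y}} + \epsilon$. Since the right-hand side of the displayed inequality converges to $\func{H}{\DMArg{g}{X},\DMArg{g}{Y}}$, for all large $k$ both $\Re\big(\Inner{X_k}{Y_k}_{g}\big)$ and $\Im\big(\Inner{X_k}{Y_k}_{g}\big)$ lie in $\L^{r}$; the norm-closedness of $\L^{r}$ together with the norm-convergence established above then forces the limits $\Re\big(\Inner{X}{Y}_{g}\big)$ and $\Im\big(\Inner{X}{Y}_{g}\big)$ to lie in $\L^{r} \subseteq \Dom{\L}$, so that their $\L$-values are at most $r$. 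Letting $\epsilon \to 0$ and taking the maximum yields the claim.

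The main obstacle is this last passage to the limit: one cannot move the limit inside $\L$ directly, since $\L$ is defined only on a dense subalgebra and is merely lower-semicontinuous rather than continuous. The closedness of the sublevel sets $\L^{r}$ --- equivalently, the lower-semicontinuity of $\L$ built into the definition of a quantum compact metric space --- is exactly the device that simultaneously certifies membership of the limiting inner products in $\Dom{\L}$ and preserves the quantitative bound. The Cauchy--Schwarz step and the continuity of $\Re$ and $\Im$ are routine by comparison.
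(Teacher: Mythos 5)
Your proposal is correct and follows essentially the same route as the paper: approximate $X$ and $Y$ by smooth sequences via part (3) of Lemma~\ref{Some Facts About Minkowski Gauge Functionals}, apply Proposition~\ref{The H-Inequality} to each pair, and pass to the limit using the norm-closedness of the sublevel sets $\L^{r}$ from property~(5) of Definition~\ref{Quasi-Leibniz Quantum Compact Metric Space}. The only difference is cosmetic: you spell out, via Cauchy--Schwarz in the Hilbert module, the norm-convergence $\Inner{X_k}{Y_k}_{g} \to \Inner{X}{Y}_{g}$, which the paper asserts without detail.
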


\begin{proof}By (3) of Lemma~\ref{Some Facts About Minkowski Gauge Functionals}, there are sequences $ \Seq{X_{k}}{k \in \N} $ and $ \Seq{Y_{k}}{k \in \N} $ in $ \Smooth{\chi_{\Theta}} $ such that
\begin{itemize}\itemsep=0pt
\item
$ \DArg{g}{X_{k}} \to \DMArg{g}{X} $ and $ \DArg{g}{Y_{k}} \to \DMArg{g}{Y} $, and

\item
$ X_{k} \to X $ and $ Y_{k} \to Y $ with respect to $ \Norm{\cdot}_{g} $.
\end{itemize}
We already have from Proposition~\ref{The H-Inequality} that
\begin{gather*}
\forall\, k \in \N\colon \quad
 \max\big( \L\big( \Re\big(\Inner{X_{k}}{Y_{k}}_{g}\big)\big), \L\big( \Im\big(\Inner{X_{k}}{Y_{k}}_{g}\big)\big)\big)
\leq H\big(\DArg{g}{X_{k}},\DArg{g}{Y_{k}}\big).
\end{gather*}
Let $ \epsilon > 0 $. As the right-hand side of this inequality converges to $ \func{H}{\DMArg{g}{X},\DMArg{g}{Y}} $ by the continuity of $ H $, we find for all $ k \in \N $ sufficiently large that
\begin{gather*}
 \max\big( \L\big( \Re\big(\Inner{X_{k}}{Y_{k}}_{g}\big)\big), \L\big( \Im\big(\Inner{X_{k}}{Y_{k}}_{g}\big)\big)\big)
\leq H\big(\DMArg{g}{X},\DMArg{g}{Y}\big) + \epsilon.
\end{gather*}
As $ \Re\big(\Inner{X_{k}}{Y_{k}}_{g}\big) \to \Re \big(\Inner{X}{Y}_{g}\big) $ and $ \Im\big(\Inner{X_{k}}{Y_{k}}_{g}\big) \to \Im \big(\Inner{X}{Y}_{g}\big) $ with respect to $ \Norm{\cdot}_{A_{\Theta}} $, the closure clause implicit in property~(5) of Definition~\ref{Quasi-Leibniz Quantum Compact Metric Space} says that $ \Re\big(\Inner{X}{Y}_{g}\big), \Im\big(\Inner{X}{Y}_{g}\big) \in \Dom{\L} $ and
\begin{gather*}
 \max\big( \L\big( \Re\big(\Inner{X}{Y}_{g}\big)\big), \L\big( \Im\big(\Inner{X}{Y}_{g}\big)\big)\big) \leq H\big(\DMArg{g}{X},\DMArg{g}{Y}\big) + \epsilon.
\end{gather*}
However, $ \epsilon > 0 $ is arbitrary, so
\begin{gather*}
 \max\big( \L\big( \Re\big(\Inner{X}{Y}_{g}\big)\big), \L\big( \Im\big(\Inner{X}{Y}_{g}\big)\big)\big) \leq H\big(\DMArg{g}{X},\DMArg{g}{Y} \big)
\end{gather*}
as required.
\end{proof}

\begin{Thm}
$ \big(\chi_{\Theta},\Inn_{g},\DM{g},A_{\Theta},\L\big)$ is an $ \Trip{F}{G}{H} $-metrized quantum vector bundle, where $ F $ is Leibniz, and $ G $ and $ H $ are defined as in Propositions~{\rm \ref{The G-Inequality}} and~{\rm \ref{The H-Inequality}} respectively.
\end{Thm}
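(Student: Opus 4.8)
The plan is to verify, one clause at a time, the seven defining properties of an $\Trip{F}{G}{H}$-metrized quantum vector bundle in Definition~\ref{Metrized Quantum Vector Bundle}, taking $F$ to be Leibniz and $G$, $H$ as in Propositions~\ref{The G-Inequality} and~\ref{The H-Inequality}; since nearly every clause has already been secured above, the work here is essentially one of assembly. For property~(1) I would invoke the fact, recorded when $\L$ was introduced, that the ergodicity of $\alpha$ makes $\Pair{A_{\Theta}}{\L}$ a \emph{Leibniz} quantum compact metric space (via \cite[Section~2]{Ri1}), hence in particular $F$-quasi-Leibniz for the Leibniz $F$. Property~(2)---that $\Pair{\chi_{\Theta}}{\Inn_{g}}$ is a left Hilbert $A_{\Theta}$-module---is exactly part~(1) of Proposition~\ref{Properties of a Riemannian Metric} applied to the Riemannian metric $g$.

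For properties~(3), (4) and (5) I would cite the three bullet-point observations extracted from Lemma~\ref{Some Facts About Minkowski Gauge Functionals}: that $\DM{g}$ is a genuine norm on a $\Norm{\cdot}_{g}$-dense $\C$-linear subspace of $\chi_{\Theta}$, that $\Norm{X}_{g} \leq \DMArg{g}{X}$ for every $X \in \Dom{\DM{g}}$, and that $\DM{g}^{1} = \Cl{\D_{g}^{1}}{\Norm{\cdot}_{g}}$ is $\Norm{\cdot}_{g}$-compact---the last of these following from Proposition~\ref{A Pre-Compactness Result} together with part~(1) of Lemma~\ref{Some Facts About Minkowski Gauge Functionals}. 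Properties~(6) and~(7), namely the $G$- and $H$-quasi-Leibniz inequalities for $\DM{g}$, are precisely the statements of the two propositions immediately preceding this theorem. Lining these seven verified clauses up against Definition~\ref{Metrized Quantum Vector Bundle} completes the proof.

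The point worth emphasizing is that the genuine difficulty does not lie in this final bookkeeping step but in the results it quotes, and I would flag two of them as the real obstacles. First, the compactness of $\DM{g}^{1}$ rests on Proposition~\ref{A Pre-Compactness Result}, whose argument uses the explicit form of Rosenberg's Levi-Civita connection together with the norm-equivalence of Lemma~\ref{Norms Associated to Different Matrices Are Equivalent} to confine the $A_{\Theta}$-coefficients of every element of $\D_{g}^{1}$ within a single $\Norm{\cdot}_{A_{\Theta}}$-compact subset of $A_{\Theta}$. Second, the $G$-inequality for $\DM{g}$ demanded extending the smooth estimate of Proposition~\ref{The G-Inequality} from $a \in \sa{\Br{\Smooth{A_{\Theta}}}}$ to an arbitrary $a \in \Dom{\L}$ through the mollification net $\Seq{a_{\nu}}{\nu \in \mathcal{N}}$, and then transferring both quasi-Leibniz estimates to the Minkowski-gauge completion by means of the approximating sequences supplied by part~(3) of Lemma~\ref{Some Facts About Minkowski Gauge Functionals}. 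With all of this machinery already in place, the theorem is an immediate consequence.
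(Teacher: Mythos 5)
Your proposal is correct and follows exactly the paper's own route: the paper states this theorem with no separate proof precisely because it is the assembly you describe---property (1) from Rieffel's ergodic-action result, property (2) from Proposition~\ref{Properties of a Riemannian Metric}, properties (3)--(5) from the bullet-point observations after Lemma~\ref{Some Facts About Minkowski Gauge Functionals} together with Proposition~\ref{A Pre-Compactness Result}, and properties (6)--(7) from the two propositions immediately preceding the theorem. Your identification of the pre-compactness argument and the mollification/Minkowski-gauge transfer as the genuine content also matches where the paper's actual work lies.
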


\begin{Rmk}We could have worked with more general $ C^{\ast} $-algebras in this paper, but here are reasons why we focus only on generically transcendental quantum tori:
\begin{enumerate}\itemsep=0pt
\item[1)] quantum tori are a convenient source of compact quantum metric spaces,
\item[2)] the norm $ \Norm{\cdot}_{\Der{\Theta}} $ on the space of $ \ast $-derivations on $ \Smooth{A_{\Theta}} $ enables us to conveniently prove the inequalities necessary to achieve the objective of this paper,
\item[3)] by Theorem~\ref{The Fundamental Theorem of Riemannian Geometry for Generically Transcendental Quantum Tori}, we have a \emph{single} parameter for the $ \D $-norms on our metrized quantum vector bundles.
\end{enumerate}
\end{Rmk}

\section[A distance-zero result for the modular Gromov--Hausdorff propinquity]{A distance-zero result\\ for the modular Gromov--Hausdorff propinquity}\label{section5}

In \cite{L4}, Latr\'emoli\`ere introduced the modular Gromov--Hausdorff propinquity as a means of measuring, for an admissible triple $ \Trip{F}{G}{H} $, how close two $ \Trip{F}{G}{H} $-metrized quantum vector bundles are to each other, in the sense of how close they are to being isomorphic, in terms of their Hilbert-$ C^{\ast} $-module structures and their metric structures.

Our aim in this section is to prove the following zero-distance result.

\begin{Thm} \label{Second Main Theorem}
Let $ g $ be a Riemannian metric, and let $ r,s \in \R_{> 0} $. Then $ \ModProp{F}{G}{H}{\Omega_{r}}{\Omega_{s}} = 0 $, where
\begin{gather*}
\Omega_{r} \df \big(\chi_{\Theta},\Inn_{r \cdot g},\DM{r \cdot g},A_{\Theta},\L\big)
\qquad \text{and} \qquad
\Omega_{s} \df \big(\chi_{\Theta},\Inn_{s \cdot g},\DM{s \cdot g},A_{\Theta},\L\big),
\end{gather*}
$ F $ is Leibniz, and $ G $ and $ H $ are defined as in Propositions~{\rm \ref{The G-Inequality}} and {\rm \ref{The H-Inequality}} respectively.
\end{Thm}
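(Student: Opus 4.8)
The plan is to exhibit an explicit isomorphism of metrized quantum vector bundles from $\Omega_r$ onto $\Omega_s$ and then invoke the fact, established in \cite{L4}, that the modular Gromov--Hausdorff propinquity vanishes between any two isomorphic metrized quantum vector bundles (one direction of the coincidence property quoted in the introduction). Since $r \cdot g$ and $s \cdot g$ are again Riemannian metrics, both $\Omega_r$ and $\Omega_s$ are genuine $\Trip{F}{G}{H}$-metrized quantum vector bundles for the same admissible triple by the construction of the previous section, so $\ModProp{F}{G}{H}{\Omega_r}{\Omega_s}$ is defined. Moreover $\Omega_r$ and $\Omega_s$ share the same base $\Pair{A_{\Theta}}{\L}$ and the same underlying module $\chi_{\Theta}$, so the base isometry may be taken to be $\Id_{A_{\Theta}}$ and the whole task reduces to tracking how $\Inn_{r \cdot g}$ and $\DM{r \cdot g}$ scale with the positive parameter $r$.

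First I would record the scaling relations. Because $g \mapsto T_{g}$ is linear, $T_{r \cdot g} = r \cdot T_{g}$, so $\Inner{X}{Y}_{r \cdot g} = r \cdot \Inner{X}{Y}_{g}$ and hence $\Norm{X}_{r \cdot g} = \sqrt{r} \cdot \Norm{X}_{g}$ for all $X,Y \in \chi_{\Theta}$. Next I would show that the Levi-Civita connection is insensitive to positive rescaling: since $\Inn_{r \cdot g} = r \cdot \Inn_{g}$ with $r \in \R_{> 0}$, the self-adjointness clause $(4)$ and the compatibility clause $(6)$ of Definition~\ref{Connection and Levi-Civita Connection} hold for $r \cdot g$ exactly when they hold for $g$, while clauses $(1)$--$(3)$ and $(5)$ make no reference to the metric; the uniqueness part of Theorem~\ref{The Fundamental Theorem of Riemannian Geometry for Generically Transcendental Quantum Tori} then forces $\nabla^{r \cdot g} = \nabla^{g}$. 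It follows that $\OpNorm{X}_{r \cdot g} = \sqrt{r} \cdot \OpNorm{X}_{g}$, whence $\DArg{r \cdot g}{X} = \sqrt{r} \cdot \DArg{g}{X}$ on $\Smooth{\chi_{\Theta}}$. Since $\Norm{\cdot}_{r \cdot g}$ and $\Norm{\cdot}_{g}$ are proportional, they induce identical closures, and rescaling a circled convex set by a positive factor rescales its Minkowski gauge by the reciprocal factor, so I would conclude $\DMArg{r \cdot g}{X} = \sqrt{r} \cdot \DMArg{g}{X}$ and $\Dom{\DM{r \cdot g}} = \Dom{\DM{g}}$.

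With these relations available, set $c \df \sqrt{r / s}$ and define $\func{\Phi}{X} \df c \cdot X$ on $\chi_{\Theta}$. As scalar multiplication commutes with the left $A_{\Theta}$-action, $\Phi$ is a $\C$-linear bijection that is $A_{\Theta}$-linear over $\Id_{A_{\Theta}}$. The two intertwining identities then follow by substitution: using the scaling of the inner products, $\Inner{\func{\Phi}{X}}{\func{\Phi}{Y}}_{s \cdot g} = \abs{c}^{2} \cdot \Inner{X}{Y}_{s \cdot g} = \frac{r}{s} \cdot s \cdot \Inner{X}{Y}_{g} = r \cdot \Inner{X}{Y}_{g} = \Inner{X}{Y}_{r \cdot g}$, and using the scaling of the $\D$-norms together with the homogeneity of $\DM{s \cdot g}$, $\DMArg{s \cdot g}{\func{\Phi}{X}} = c \cdot \DMArg{s \cdot g}{X} = \sqrt{r / s} \cdot \sqrt{s} \cdot \DMArg{g}{X} = \sqrt{r} \cdot \DMArg{g}{X} = \DMArg{r \cdot g}{X}$. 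Together with the trivial observation that $\Id_{A_{\Theta}}$ is a quantum isometry of $\Pair{A_{\Theta}}{\L}$ onto itself, this shows that $\Br{\Id_{A_{\Theta}},\Phi}$ is an isomorphism of metrized quantum vector bundles from $\Omega_r$ onto $\Omega_s$, so $\ModProp{F}{G}{H}{\Omega_r}{\Omega_s} = 0$ by \cite{L4}.

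The step I expect to demand the most care is matching $\Br{\Id_{A_{\Theta}},\Phi}$ against the precise definition of isomorphism of metrized quantum vector bundles in \cite{L4} for which distance zero is an equivalence, and confirming that every clause of that definition is met, in particular the compatibility between the base isometry and the module map and the absolute homogeneity of the Minkowski-gauge norm $\DM{r \cdot g}$ underlying the clean scaling $\DMArg{r \cdot g}{\cdot} = \sqrt{r} \cdot \DMArg{g}{\cdot}$. The only genuinely geometric input is the invariance $\nabla^{r \cdot g} = \nabla^{g}$ of the Levi-Civita connection under positive rescaling; everything else is bookkeeping with the constants $\sqrt{r}$ and $\sqrt{s}$.
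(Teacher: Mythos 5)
Your proposal is correct, but it takes a genuinely different route from the paper's own proof --- indeed, exactly the route the paper mentions and then deliberately sets aside. The scaling analysis you perform (the invariance $\nabla^{r \cdot g} = \nabla^{g}$ by uniqueness of the Levi-Civita connection, hence $\Norm{\cdot}_{r \cdot g} = \sqrt{r}\,\Norm{\cdot}_{g}$, $\OpNorm{\cdot}_{r \cdot g} = \sqrt{r}\,\OpNorm{\cdot}_{g}$, $\D_{r \cdot g} = \sqrt{r}\,\D_{g}$, and then $\Dom{\DM{r \cdot g}} = \Dom{\DM{g}}$ with $\DM{r \cdot g} = \sqrt{r}\,\DM{g}$ via the behaviour of Minkowski gauges under dilation) is precisely the content of Lemma~\ref{The Effect of Scaling Riemannian Metrics on D-Norms}; both proofs rest on it, and your derivation of it is sound. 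The difference lies in the final step. You verify the six clauses of a full quantum isometry for the pair $\Pair{\Id_{A_{\Theta} \to A_{\Theta}}}{\Phi}$ with $\Phi = \sqrt{r/s} \cdot \Id_{\chi_{\Theta} \to \chi_{\Theta}}$ and then invoke the coincidence property of $\Lambda^{\mod}_{F,G,H}$ from \cite{L4}; the paper records that this pair is ``without difficulty'' a full quantum isometry, but then proves Theorem~\ref{Second Main Theorem} directly, by exhibiting the explicit $\Trip{F}{G}{H}$-modular bridge $\gamma_{r,s}$ whose pivot is $1_{A_{\Theta}}$, whose $\ast$-monomorphisms are both the identity, with $\mathcal{N} = \DM{r \cdot g}^{1}$, $\alpha = \Id$ and $\beta_{r,s}\colon X \mapsto \sqrt{r/s} \cdot X$, and by computing that its basic reach, height, imprint and modular reach (hence its length) all vanish, so the propinquity is bounded above by zero. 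Your approach buys brevity, at the price of using the coincidence theorem of \cite{L4} as a black box --- note that you only need its elementary direction (a full quantum isometry forces distance zero), not the substantial converse. The paper's approach buys a self-contained computation, given only the definitions of bridge length and propinquity, that exercises the bridge machinery (bridge seminorm, reach, height, imprint, deck seminorm); the author states explicitly that this pedagogical illustration is the reason for proving the theorem directly. To your credit, you also addressed the hypothesis that is easiest to overlook, namely that $\Omega_{r}$ and $\Omega_{s}$ are metrized quantum vector bundles for the \emph{same} admissible triple $\Trip{F}{G}{H}$ (since $G$ and $H$ of Propositions~\ref{The G-Inequality} and~\ref{The H-Inequality} do not depend on the metric), without which the propinquity between them would not even be defined.
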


\begin{Def}[\cite{L4}] Let $ B $ be a unital $ C^{\ast} $-algebra, and let $ \mathfrak{b} \in \sa{B} $. The \emph{$ 1 $-level set} of $ \mathfrak{b} $ is then the subset $ \State{B|\mathfrak{b}} $ of $ \State{B} $ defined by
\begin{gather*}
 \State{B|\mathfrak{b}}
\df \Set{\phi \in \State{B}}
 {
 \func{\phi}{\Br{1_{B} - \mathfrak{b}}^{\ast} \Br{1_{B} - \mathfrak{b}}}
 = 0
 = \func{\phi}{\Br{1_{B} - \mathfrak{b}} \Br{1_{B} - \mathfrak{b}}^{\ast}}
 }.
\end{gather*}
\end{Def}

\begin{Def}[\cite{L4}]Let $ \Trip{F}{G}{H} $ be an admissible triple. An $ \Trip{F}{G}{H} $-\emph{modular bridge} is then an ordered $ 9 $-tuple $ \gamma = \Br{\Omega_{1},\Omega_{2},B,\mathfrak{b},\pi_{1},\pi_{2},\mathcal{N},\alpha,\beta} $ satisfying the following six properties:
\begin{enumerate}\itemsep=0pt
\item[1)] $ \Omega_{1} $ and $ \Omega_{2} $ are $ \Trip{F}{G}{H} $-metrized quantum vector bundles,\footnote{\textbf{Note.} From now on, we will write $ \Omega_{1} = \Br{\X_{1},\Inn_{1},\D_{1},A_{1},\L_{1}} $ and $ \Omega_{2} = \Br{\mathsf{X}_{2},\Inn_{2},\D_{2},A_{2},\L_{2}} $.}
\item[2)] $ B $ is a unital $ C^{\ast} $-algebra,
\item[3)] $ \mathfrak{b} $ is an element of $ \sa{B} $, called the \emph{pivot}, such that $ \State{B|\mathfrak{b}} \neq \varnothing $ and $ \Norm{\mathfrak{b}}_{B} = 1 $,
\item[4)] $ \pi_{1} $ and $ \pi_{2} $ are, respectively, unital $ \ast $-monomorphisms from $ A_{1} $ and $ A_{2} $ to $ B $,
\item[5)] $ \mathcal{N} $ is a non-empty set,
\item[6)] $ \alpha $ and $ \beta $ are, respectively, functions from $ \mathcal{N} $ to $ \D_{1}^{1} $ and $ \D_{2}^{1} $.
\end{enumerate}
The \emph{domain} of $ \gamma $, denoted by $ \Dom{\gamma} $, is defined to be $ \Omega_{1} $, and the \emph{co-domain} of $ \gamma $, denoted by $ \CoDom{\gamma} $, is defined to be $ \Omega_{2} $. We say that $\Br{\Omega_{1},\Omega_{2},B,\mathfrak{b}, \pi_{1},\pi_{2},\mathcal{N},\alpha,\beta} $ is a \emph{modular bridge} from $ \Omega_{1} $ to $ \Omega_{2} $.
\end{Def}

A modular bridge between two metrized quantum vector bundles yields useful numerical quantities that indicate how close the metrized quantum vector bundles are to each other. Before defining these quantities, we require a preliminary definition.

\begin{Def}[\cite{L4}]Let $ \Omega = \Br{\X,\Inn,\D,A,\L} $ be a metrized quantum vector bundle. The \emph{modular Monge--Kantorovich metric} of $ \Omega $ is then the metric\footnote{This is a legitimate metric, thanks to property~(5) of Definition~\ref{Metrized Quantum Vector Bundle}.} $ \k_{\Omega} $ on $ \X $ defined by
\begin{gather*}
\forall\, \zeta,\eta \in \X\colon \quad
\func{\k_{\Omega}}{\zeta,\eta} \df \sup \big(\big\{{\Norm{\Inner{\zeta - \eta}{\theta}}_{\Inn}}\,|\,{\theta \in \D^{1}}\big\}\big).
\end{gather*}
\end{Def}

\begin{Def}[\cite{L4}] \label{Numerical Quantities Associated to a Modular Bridge} Let $ \Trip{F}{G}{H} $ be an admissible triple and
\begin{gather*}
\gamma = \Br{\Omega_{1},\Omega_{2},B,\mathfrak{b},\pi_{1},\pi_{2},\mathcal{N},\alpha,\beta}
\end{gather*}
an $ \Trip{F}{G}{H} $-modular bridge.
\begin{enumerate}\itemsep=0pt
\item The \emph{bridge seminorm} of $ \gamma $ is the seminorm $ \bn_{\gamma} $ on $ A_{1} \oplus A_{2} $ defined by
\begin{gather*}
\forall\, a_{1} \in A_{1}, ~ \forall\, a_{2} \in A_{2}\colon \quad
\bnArg{\gamma}{a_{1}}{a_{2}} \df \Norm{\func{\pi_{1}}{a_{1}} \mathfrak{b} - \mathfrak{b} \func{\pi_{2}}{a_{2}}}_{B}.
\end{gather*}

\item The \emph{basic reach} $ \func{\rho_{\flat}}{\gamma} $ of $ \gamma $ is the Hausdorff distance, in the seminormed space $ \Pair{A_{1} {\oplus} A_{2}}{\bn_{\gamma}} $, between the embedded images of $ \Dom{\L_{1}} $ and $ \Dom{\L_{2}} $, i.e.,
\begin{gather*}
 \func{\rho_{\flat}}{\gamma}
\df \func{\max}{
 \adjustlimits \sup_{\substack{a_{1} \in \Dom{\L_{1}} \\ \func{\L_{1}}{a_{1}} \leq 1}}
 \inf_{\substack{a_{2} \in \Dom{\L_{2}} \\ \func{\L_{2}}{a_{2}} \leq 1}}
 \bnArg{\gamma}{a_{1}}{a_{2}},
 \adjustlimits \sup_{\substack{a_{2} \in \Dom{\L_{2}} \\ \func{\L_{2}}{a_{2}} \leq 1}}
 \inf_{\substack{a_{1} \in \Dom{\L_{1}} \\ \func{\L_{1}}{a_{1}} \leq 1}}
 \bnArg{\gamma}{a_{1}}{a_{2}}
 }.
\end{gather*}

\item
The \emph{height} of $ \gamma $ is the non-negative quantity $ \func{\varsigma}{\gamma} $ defined by
\begin{gather*}
 \func{\varsigma}{\gamma}
\df \func{\max}{
 \begin{matrix}
 \Haus{\mk_{\L_{1}}}{\State{A_{1}}}{\Set{\phi \circ \pi_{1}}{\phi \in \State{B|\mathfrak{b}}}} \\
 \Haus{\mk_{\L_{2}}}{\State{A_{2}}}{\Set{\psi \circ \pi_{2}}{\psi \in \State{B|\mathfrak{b}}}}
 \end{matrix}
 },
\end{gather*}
where
\begin{itemize}\itemsep=0pt
\item $ \mathsf{Haus}_{\mk_{\L_{1}}} $ denotes the $ \mk_{\L_{1}} $-induced Hausdorff distance between subsets of $ \State{A_{1}} $, and

\item $ \mathsf{Haus}_{\mk_{\L_{2}}} $ denotes the $ \mk_{\L_{2}} $-induced Hausdorff distance between subsets of $ \State{A_{2}} $.
\end{itemize}

\item The \emph{deck seminorm} of $ \gamma $ is the seminorm $ \dn_{\gamma} $ on $ \X_{1} \oplus \X_{2} $ defined by
\begin{gather*}
\forall\, \zeta \in \X_{1}, ~ \forall\, \eta \in \X_{2}\colon \\
 \dnArg{\gamma}{\zeta}{\eta}
\df \func{\sup}{
 \Set{
 \bnArg{\gamma}{\Inner{\zeta}{\func{\alpha}{\nu}}_{1}}{\Inner{\eta}{\func{\beta}{\nu}}_{2}},
 \bnArg{\gamma}{\Inner{\func{\alpha}{\nu}}{\zeta}_{1}}{\Inner{\func{\beta}{\nu}}{\eta}_{2}}
 }
 {\nu \in \mathcal{N}}
 }.
\end{gather*}

\item The \emph{modular reach} of $ \gamma $ is the non-negative quantity $ \func{\rho^{\sharp}}{\gamma} $ defined by
\begin{gather*}
\func{\rho^{\sharp}}{\gamma} \df \func{\sup}{\Set{\dnArg{\gamma}{\func{\alpha}{\nu}}{\func{\beta}{\nu}}}{\nu \in \mathcal{N}}}.
\end{gather*}

\item
The \emph{imprint} of $ \gamma $ is the non-negative quantity $ \func{\varpi}{\gamma} $ defined by
\begin{gather*}
\func{\varpi}{\gamma} \df \max \big(\Haus{\k_{\Omega_{1}}}{\Range{\alpha}}{\D_{1}^{1}},\Haus{\k_{\Omega_{2}}}{\Range{\beta}}{\D_{2}^{1}}\big),
\end{gather*}
where
\begin{itemize}\itemsep=0pt
\item $ \mathsf{Haus}_{\k_{\Omega_{1}}} $ denotes the $ \k_{\Omega_{1}} $-induced Hausdorff distance between subsets of $ \X_{1} $, and
\item $ \mathsf{Haus}_{\k_{\Omega_{2}}} $ denotes the $ \k_{\Omega_{2}} $-induced Hausdorff distance between subsets of $ \X_{2} $.
\end{itemize}

\item The \emph{reach} of $ \gamma $ is the non-negative quantity $ \func{\rho}{\gamma} $ defined by
\begin{gather*}
\func{\rho}{\gamma} \df \max \big(\func{\rho_{\flat}}{\gamma},\func{\rho^{\sharp}}{\gamma} + \func{\varpi}{\gamma}\big).
\end{gather*}

\item The \emph{length} of $ \gamma $ is the non-negative quantity $ \func{\lambda}{\gamma} $ defined by
\begin{gather*}
\func{\lambda}{\gamma} \df \func{\max}{\func{\varsigma}{\gamma},\func{\rho}{\gamma}}.
\end{gather*}
\end{enumerate}
\end{Def}

\begin{Rmk}Lemma 4.19 of \cite{L4} establishes that the numerical quantities defined in Definition~\ref{Numerical Quantities Associated to a Modular Bridge} are finite.
\end{Rmk}

\begin{Def}[\cite{L4}] The \emph{modular Gromov--Hausdorff propinquity}, for an admissible triple $ \Trip{F}{G}{H} $, is the (class) function $ \Lambda^{\mod}_{F,G,H} $ from the class of all ordered pairs of $ \Trip{F}{G}{H} $-metrized quantum vector bundles to $ \CO{0}{\infty} $ defined by
\begin{gather*}
 \ModProp{F}{G}{H}{\Omega_{1}}{\Omega_{2}}
\df \func{\inf}{
 \left\{{\sum_{j = 1}^{m} \func{\lambda}{\gamma_{j}}}\left|\,
 {
 \begin{matrix}
 \Seq{\gamma_{j}}{j \in \SqBr{m}} ~ \text{is a finite sequence} \\\ \text{of} ~ \Trip{F}{G}{H}\text{-modular bridges}, \\
 \Dom{\gamma_{1}} = \Omega_{1} ~ \text{and} ~ \CoDom{\gamma_{m}} = \Omega_{2}, \\
 \Dom{\gamma_{j + 1}} = \CoDom{\gamma_{j}} ~ \text{for all} ~ j \in \SqBr{m - 1}
 \end{matrix} } \right.\right\}}
\end{gather*}
for all $ \Trip{F}{G}{H} $-metrized quantum vector bundles $ \Omega_{1} $ and $ \Omega_{2} $.
\end{Def}

We now define a full quantum isometry between metrized quantum vector bundles.

\begin{Def}[\cite{L4}]
Let $ \Trip{F}{G}{H} $ be an admissible triple. Let $ \Omega_{1} $ and $ \Omega_{2} $ be $ \Trip{F}{G}{H} $-metrized quantum vector bundles. A \emph{full quantum isometry} from $ \Omega_{1} $ to $ \Omega_{2} $ is then an ordered pair $ \Pair{\mathscr{L}}{\mathscr{D}} $ with the following properties:
\begin{itemize}\itemsep=0pt
\item $ \mathscr{L} $ is a unital $ \ast $-isomorphism from $ A_{1} $ to $ A_{2} $,
\item $ \mathscr{D} $ is a continuous linear isomorphism (not necessarily unitary) from $ \X_{1} $ to $ \X_{2} $,
\item $ \L_{2} \circ \mathscr{L} = \L_{1} $,
\item $ \func{\mathscr{D}}{a \bullet \zeta} = \func{\mathscr{L}}{a} \bullet \func{\mathscr{D}}{\zeta} $ for all $ a \in A_{1} $ and $ \zeta \in \X_{1} $,
\item $ \D_{2} \circ \mathscr{D} = \D_{1} $,
\item $ \Inner{\func{\mathscr{D}}{\cdot}}{\func{\mathscr{D}}{\cdot}}_{2} = \mathscr{L} \circ \Inn_{1} $.
\end{itemize}
\end{Def}

The following theorem says that the modular Gromov--Hausdorff propinquity has the required properties to qualify as a (class) pseudometric.

\begin{Thm}[\cite{L4}] Let $ \Trip{F}{G}{H} $ be an admissible triple. For all $ \Trip{F}{G}{H} $-metrized quantum vector bundles $ \Omega_{1} $, $ \Omega_{2} $ and $ \Omega_{3} $, the following statements hold:
\begin{enumerate}\itemsep=0pt
\item[{\rm 1)}] $ \ModProp{F}{G}{H}{\Omega_{1}}{\Omega_{2}} = \ModProp{F}{G}{H}{\Omega_{2}}{\Omega_{1}} $,
\item[{\rm 2)}] $ \ModProp{F}{G}{H}{\Omega_{1}}{\Omega_{3}} \leq \ModProp{F}{G}{H}{\Omega_{1}}{\Omega_{2}} + \ModProp{F}{G}{H}{\Omega_{2}}{\Omega_{3}}$,
\item[{\rm 3)}] $ \ModProp{F}{G}{H}{\Omega_{1}}{\Omega_{2}} = 0 $ if and only if there exists a full quantum isometry from $\Omega_{1}$ to $\Omega_{2}$.
\end{enumerate}
\end{Thm}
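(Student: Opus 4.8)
The plan is to dispatch the three assertions in ascending order of difficulty, since symmetry and the triangle inequality are formal consequences of the definition of $\ModProp{F}{G}{H}{\cdot}{\cdot}$ as an infimum over chains of modular bridges, whereas the zero-distance characterization carries the real content. For symmetry, I would show that every modular bridge reverses with its length preserved. Given $\gamma = \Br{\Omega_{1},\Omega_{2},B,\mathfrak{b},\pi_{1},\pi_{2},\mathcal{N},\alpha,\beta}$, set $\gamma^{-1} \df \Br{\Omega_{2},\Omega_{1},B,\mathfrak{b},\pi_{2},\pi_{1},\mathcal{N},\beta,\alpha}$; this is again a modular bridge since the pivot $\mathfrak{b} \in \sa{B}$, so $\Norm{\mathfrak{b}}_{B} = 1$ and $\State{B|\mathfrak{b}} \neq \varnothing$ are untouched. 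The crux is the adjoint identity $\bnArg{\gamma}{a}{b} = \bnArg{\gamma^{-1}}{b^{\ast}}{a^{\ast}}$ for all $a \in A_{1}$, $b \in A_{2}$, which holds because $\mathfrak{b}$ is self-adjoint; on self-adjoint arguments it reduces to $\bnArg{\gamma}{a}{b} = \bnArg{\gamma^{-1}}{b}{a}$, handling the basic reach and height, while the general version together with $\Inner{\zeta}{\xi}^{\ast} = \Inner{\xi}{\zeta}$ handles the deck seminorm, the modular reach and the imprint. Hence $\func{\lambda}{\gamma} = \func{\lambda}{\gamma^{-1}}$, and reversing every bridge of a chain (and the order of the chain) gives $\ModProp{F}{G}{H}{\Omega_{1}}{\Omega_{2}} = \ModProp{F}{G}{H}{\Omega_{2}}{\Omega_{1}}$. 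The triangle inequality is then immediate: concatenating a chain from $\Omega_{1}$ to $\Omega_{2}$ with one from $\Omega_{2}$ to $\Omega_{3}$ produces an admissible chain from $\Omega_{1}$ to $\Omega_{3}$ of additive total length, and passing to infima yields subadditivity.

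For the easy direction of (3), given a full quantum isometry $\Pair{\mathscr{L}}{\mathscr{D}}$ I would build a single length-zero bridge. Take $B \df A_{2}$, pivot $\mathfrak{b} \df 1_{B}$, $\pi_{1} \df \mathscr{L}$, $\pi_{2} \df \Id_{A_{2}}$, and $\mathcal{N} \df \D_{1}^{1}$ with $\func{\alpha}{\nu} \df \nu$ and $\func{\beta}{\nu} \df \func{\mathscr{D}}{\nu}$, where $\beta$ lands in $\D_{2}^{1}$ because $\D_{2} \circ \mathscr{D} = \D_{1}$. With unit pivot the bridge seminorm collapses to $\bnArg{\gamma}{a_{1}}{a_{2}} = \Norm{\func{\mathscr{L}}{a_{1}} - a_{2}}_{A_{2}}$, so the choice $a_{2} = \func{\mathscr{L}}{a_{1}}$ kills the basic reach; since $\State{B|1_{B}} = \State{A_{2}}$ and $\mathscr{L}$ induces a Monge--Kantorovich isometry of state spaces (as $\L_{2} \circ \mathscr{L} = \L_{1}$), the height vanishes; the relations $\func{\beta}{\nu} = \func{\mathscr{D}}{\func{\alpha}{\nu}}$ and $\Inner{\func{\mathscr{D}}{\cdot}}{\func{\mathscr{D}}{\cdot}}_{2} = \mathscr{L} \circ \Inn_{1}$ force each deck-seminorm term to $0$, killing the modular reach; and surjectivity of $\alpha$ and of $\beta = \mathscr{D} \circ \alpha$ onto $\D_{1}^{1}$ and $\D_{2}^{1}$ kills the imprint. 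Thus $\func{\lambda}{\gamma} = 0$ and $\ModProp{F}{G}{H}{\Omega_{1}}{\Omega_{2}} = 0$.

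The hard direction of (3) is where the genuine work sits. Starting from chains of vanishing total length, I would recover the isometry in two stages. First, since the modular length of a bridge dominates the length of its underlying non-modular bridge $\Br{B,\mathfrak{b},\pi_{1},\pi_{2}}$ --- indeed $\func{\rho}{\gamma} \geq \func{\rho_{\flat}}{\gamma}$ --- the base-level quantum Gromov--Hausdorff propinquity between $\Pair{A_{1}}{\L_{1}}$ and $\Pair{A_{2}}{\L_{2}}$ is also zero, and the distance-zero theorem of \cite{L3} furnishes a unital $\ast$-isomorphism $\mathscr{L}\colon A_{1} \to A_{2}$ with $\L_{2} \circ \mathscr{L} = \L_{1}$. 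Second, I would manufacture $\mathscr{D}$ by a compactness-and-limit argument: property~(5) of Definition~\ref{Metrized Quantum Vector Bundle} makes each ball $\D_{i}^{r}$ norm-\emph{compact}, so the target families $\beta$ live in a compact set, and a diagonal/ultralimit extraction along the chains produces a well-defined linear map $\mathscr{D}$ on $\Dom{\D_{1}}$. Passing to the limit in the bridge estimates --- the imprint forcing $\Range{\alpha}$ and $\Range{\beta}$ to be $\k$-dense in the unit balls, the deck seminorm pinning the inner products, and the modular reach aligning the $\alpha$'s with the $\beta$'s --- yields $\func{\D_{2}}{\func{\mathscr{D}}{\zeta}} = \func{\D_{1}}{\zeta}$, the module identity $\func{\mathscr{D}}{a \bullet \zeta} = \func{\mathscr{L}}{a} \bullet \func{\mathscr{D}}{\zeta}$, and the inner-product intertwining $\Inner{\func{\mathscr{D}}{\zeta}}{\func{\mathscr{D}}{\eta}}_{2} = \func{\mathscr{L}}{\Inner{\zeta}{\eta}_{1}}$; continuity of $\mathscr{D}$ for $\Norm{\cdot}_{\Inn}$ then extends it to an isomorphism $\X_{1} \to \X_{2}$, producing the full quantum isometry $\Pair{\mathscr{L}}{\mathscr{D}}$.

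The main obstacle is precisely this construction of $\mathscr{D}$. One must (i)~extract a \emph{single} limiting map from infinitely many bridges carrying no a priori coherence, (ii)~prove the limit is linear and independent of the extraction, and (iii)~show that the $\D$-norm, the module action over $\mathscr{L}$, and the $A$-valued inner product are \emph{simultaneously} preserved in the limit. The inner product is the most delicate of the three, since it is visible only indirectly through the deck seminorm paired against the auxiliary families $\alpha,\beta$; the norm-compactness of the balls $\D_{i}^{r}$ coming from property~(5) of Definition~\ref{Metrized Quantum Vector Bundle} is exactly the ingredient that makes the limiting procedure converge.
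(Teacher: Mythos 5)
You should note at the outset that this paper contains no proof of the statement at all: the theorem is attributed to and imported wholesale from \cite{L4}, so your proposal can only be measured against Latr\'emoli\`ere's argument, not against anything internal to this paper. With that understood, your treatment of (1), (2), and the forward implication of (3) is correct and essentially the standard argument. The reversed bridge $\gamma^{-1}$ with the same self-adjoint pivot, the adjoint identity $\bnArg{\gamma}{a}{b} = \bnArg{\gamma^{-1}}{b^{\ast}}{a^{\ast}}$ combined with $\Inner{\zeta}{\xi}^{\ast} = \Inner{\xi}{\zeta}$ to handle the deck seminorm, concatenation of chains for subadditivity, and the bridge $\big(A_{2}, 1_{A_{2}}, \mathscr{L}, \Id_{A_{2}}\big)$ with $\beta = \mathscr{D} \circ \alpha$ all check out. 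One small point you elide: killing the imprint requires $\mathscr{D}$ to map $\D_{1}^{1}$ \emph{onto} $\D_{2}^{1}$, not merely into it; this follows because the inverse of a full quantum isometry is again a full quantum isometry, but it deserves a sentence.

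The hard direction of (3), however, has a genuine gap as you have staged it. You propose to obtain $\mathscr{L}$ first, as a black box from the distance-zero theorem of \cite{L3}, and only afterwards to extract $\mathscr{D}$ by a compactness/diagonal argument along the chains. But the axioms of a full quantum isometry couple the two maps: you must establish $\func{\mathscr{D}}{a \bullet \zeta} = \func{\mathscr{L}}{a} \bullet \func{\mathscr{D}}{\zeta}$ and $\Inner{\func{\mathscr{D}}{\zeta}}{\func{\mathscr{D}}{\eta}}_{2} = \func{\mathscr{L}}{\Inner{\zeta}{\eta}_{1}}$ for the \emph{same} $\mathscr{L}$, and nothing ties the isomorphism handed to you abstractly by \cite{L3} to the limit map you extract from your particular sequence of bridges --- the deck-seminorm and reach estimates pin the inner products and module action only against whatever base correspondences those particular bridges carry. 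The repair, and what \cite{L4} actually does, is to extract both maps simultaneously from a single sequence of modular treks via the target-set formalism: target sets define multivalued correspondences at both the base level and the module level whose widths are controlled by the total length, and compactness of the balls $\L^{r}$ and $\D^{r}$ collapses them, along one common subsequence, to single-valued maps that are then shown to be linear, Lipschitz-isometric, module-compatible, and inner-product preserving all at once. Relatedly, your step (ii) is understated: additivity of a pointwise ultralimit defined on the compact ball $\D_{1}^{1}$ is not automatic and must itself be wrung out of the deck-seminorm estimates together with the imprint (density of $\Range{\alpha}$ and $\Range{\beta}$), which is precisely the work the target-set machinery performs. So the skeleton and the role you assign to property (5) of Definition 4.5 are right, but the two-stage decoupling of $\mathscr{L}$ from $\mathscr{D}$ is a step that would fail as stated.
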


\begin{Lem} \label{The Effect of Scaling Riemannian Metrics on D-Norms} Let $ g $ be a Riemannian metric, and let $ r,s \in \R_{> 0} $. Then
\begin{gather*}
\Dom{\DM{r \cdot g}} = \Dom{\DM{s \cdot g}}
\qquad \text{and} \qquad
\frac{1}{\sqrt{r}} \DM{r \cdot g} = \frac{1}{\sqrt{s}} \DM{s \cdot g}.
\end{gather*}
\end{Lem}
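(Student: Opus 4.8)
The plan is to show that the domain $\Dom{\DM{r \cdot g}}$ does not depend on $r$ and that $\DM{r \cdot g}$ scales by exactly $\sqrt{r}$, so that $\frac{1}{\sqrt{r}}\DM{r \cdot g}$ is literally independent of $r$; comparing the expressions for $r$ and for $s$ then gives both assertions at once. First I would observe that $r \cdot g$ is again a Riemannian metric, since multiplication by $r \in \R_{> 0}$ preserves positivity, invertibility, and the self-adjoint smooth entries, so that $\DM{r \cdot g}$ is defined. The basic scaling input is that $\func{T_{r \cdot g}}{X} = r \cdot \func{T_{g}}{X}$ by linearity of $T$ in its matrix argument, whence $\Inner{X}{Y}_{r \cdot g} = r \Inner{X}{Y}_{g}$ for all $X,Y \in \chi_{\Theta}$; evaluating on the diagonal and taking $C^{\ast}$-norms gives $\Norm{\cdot}_{r \cdot g} = \sqrt{r}\,\Norm{\cdot}_{g}$.

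The heart of the argument is the identity $\nabla^{r \cdot g} = \nabla^{g}$ for the two Levi-Civita connections. Rather than compute $\nabla^{r \cdot g}$ directly, I would invoke the uniqueness in Theorem~\ref{The Fundamental Theorem of Riemannian Geometry for Generically Transcendental Quantum Tori} and simply verify that $\nabla^{g}$ is itself a Levi-Civita connection for $r \cdot g$. Properties (1), (2), (3), and (5) of Definition~\ref{Connection and Levi-Civita Connection} make no reference to the metric and hence hold for $\nabla^{g}$ regardless of which metric is used. For property (4), $\Inner{\func{\nabla^{g}_{\partial_{j}}}{\e{k}}}{\e{l}}_{r \cdot g} = r \Inner{\func{\nabla^{g}_{\partial_{j}}}{\e{k}}}{\e{l}}_{g}$ is self-adjoint because the right-hand inner product is and $r > 0$. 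For the compatibility property (6), applying $\delta$ to $\Inner{X}{Y}_{r \cdot g} = r \Inner{X}{Y}_{g}$ and invoking compatibility of $\nabla^{g}$ with $g$ produces, after multiplying through by $r$, compatibility of $\nabla^{g}$ with $r \cdot g$. Uniqueness then forces $\nabla^{r \cdot g} = \nabla^{g}$, and I expect this identification to be the only genuinely nontrivial step.

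Granting this, the scaling of the operator seminorm is immediate: since $\func{\nabla^{r \cdot g}_{\delta}}{X} = \func{\nabla^{g}_{\delta}}{X}$ and $\Norm{\cdot}_{r \cdot g} = \sqrt{r}\,\Norm{\cdot}_{g}$, the supremum defining $\OpNorm{X}_{r \cdot g}$ equals $\sqrt{r}\,\OpNorm{X}_{g}$. Taking the maximum with $\Norm{\cdot}_{r \cdot g} = \sqrt{r}\,\Norm{\cdot}_{g}$ gives $\DArg{r \cdot g}{X} = \sqrt{r}\,\DArg{g}{X}$ for every $X \in \Smooth{\chi_{\Theta}}$; in particular $\frac{1}{\sqrt{r}}\D_{r \cdot g} = \D_{g} = \frac{1}{\sqrt{s}}\D_{s \cdot g}$ on $\Smooth{\chi_{\Theta}}$, and consequently $\D_{r \cdot g}^{1} = \frac{1}{\sqrt{r}}\D_{g}^{1}$ by absolute homogeneity of the norm $\D_{g}$.

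Finally I would transfer this to the Minkowski gauges. Since $\Norm{\cdot}_{r \cdot g}$ is a positive multiple of $\Norm{\cdot}_{g}$, the two norms define the same topology, so $\Cl{\D_{r \cdot g}^{1}}{\Norm{\cdot}_{r \cdot g}} = \frac{1}{\sqrt{r}}\Cl{\D_{g}^{1}}{\Norm{\cdot}_{g}}$, the scalar being pulled out of the closure because it acts as a homeomorphism. The elementary gauge rule that the Minkowski functional of $\lambda \cdot C$ is $\frac{1}{\lambda}$ times that of $C$ (with identical domain) for $\lambda \in \R_{> 0}$ then yields $\DM{r \cdot g} = \sqrt{r}\,\DM{g}$ and $\Dom{\DM{r \cdot g}} = \Dom{\DM{g}}$, and the same with $s$ in place of $r$. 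Comparing the two gives $\Dom{\DM{r \cdot g}} = \Dom{\DM{s \cdot g}}$ and $\frac{1}{\sqrt{r}}\DM{r \cdot g} = \DM{g} = \frac{1}{\sqrt{s}}\DM{s \cdot g}$, as required.
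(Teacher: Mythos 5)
Your proposal is correct and follows essentially the same route as the paper: establish $\nabla^{r \cdot g} = \nabla^{g}$ via the uniqueness clause of Theorem~\ref{The Fundamental Theorem of Riemannian Geometry for Generically Transcendental Quantum Tori}, deduce $\D_{r \cdot g} = \sqrt{r}\, \D_{g}$ and hence the scaling of the unit balls, and then transfer to the Minkowski gauges using the equivalence of $\Norm{\cdot}_{g}$ and $\Norm{\cdot}_{r \cdot g}$. The only (harmless) differences are that you verify the Levi-Civita axioms for $\nabla^{g}$ relative to $r \cdot g$ rather than for $\nabla^{r \cdot g}$ relative to $g$ --- spelling out the check the paper leaves as ``one can verify'' --- and that you obtain $\OpNorm{\cdot}_{r \cdot g} = \sqrt{r}\,\OpNorm{\cdot}_{g}$ directly from $\Norm{\cdot}_{r \cdot g} = \sqrt{r}\,\Norm{\cdot}_{g}$ instead of via the paper's duality computation with suprema of inner products.
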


\begin{proof}One can verify that $ \nabla^{r \cdot g} $ is also a Levi-Civita connection for $ g $, so $ \nabla^{r \cdot g} = \nabla^{g} $ by Theorem~\ref{The Fundamental Theorem of Riemannian Geometry for Generically Transcendental Quantum Tori} and
\begin{gather*}
\forall\, \delta \in \Der{\Theta}, ~ \forall\, X,Y \in \Smooth{\chi_{\Theta}}\colon \quad
 \Inner{\func{\nabla^{r \cdot g}_{\delta}}{X}}{Y}_{r \cdot g}
= \Inner{\func{\nabla^{g}_{\delta}}{X}}{Y}_{r \cdot g}
= r \cdot \Inner{\func{\nabla^{g}_{\delta}}{X}}{Y}_{g},
\end{gather*}
which yields
\begin{gather*}
\forall\, \delta \in \Der{\Theta}, ~ \forall\, X,Y \in \Smooth{\chi_{\Theta}}\colon \quad
 \big\|\Inner{\func{\nabla^{r \cdot g}_{\delta}}{X}}{Y}_{r \cdot g}\big\|_{A_{\Theta}}
= \sqrt{r} \big\|\Inner{\func{\nabla^{g}_{\delta}}{X}}{\sqrt{r} \cdot Y}_{g}\big\|_{A_{\Theta}}.
\end{gather*}
Hence, we have for all $ \delta \in \Der{\Theta} $ and $ X \in \Smooth{\chi_{\Theta}} $ that
\begin{align*}
 \Norm{\func{\nabla^{r \cdot g}_{\delta}}{X}}_{r \cdot g}
& = \sup_{\substack{Y \in \chi_{\Theta} \\ \Norm{Y}_{r \cdot g} = 1}}
 \big\|\Inner{\func{\nabla^{r \cdot g}_{\delta}}{X}}{Y}_{r \cdot g}\big\|_{A_{\Theta}}
 = \sup_{\substack{Y \in \chi_{\Theta} \\ \Norm{Y}_{r \cdot g} = 1}}
 \sqrt{r} \big\|\Inner{\func{\nabla^{g}_{\delta}}{X}}{\sqrt{r} \cdot Y}_{g}\big\|_{A_{\Theta}} \\
& = \sup_{\substack{Y \in \chi_{\Theta} \\ \Norm{\sqrt{r} \cdot Y}_{g} = 1}}
 \sqrt{r} \big\|\Inner{\func{\nabla^{g}_{\delta}}{X}}{\sqrt{r} \cdot Y}_{g}\big\|_{A_{\Theta}} \quad
 \big(\text{as $ \Norm{\cdot}_{r \cdot g} = \sqrt{r} \Norm{\cdot}_{g} $}\big) \\
& = \sup_{\substack{Y \in \chi_{\Theta} \\ \Norm{Y}_{g} = 1}} \sqrt{r} \big\|\Inner{\func{\nabla^{g}_{\delta}}{X}}{Y}_{g}\big\|_{A_{\Theta}}
 = \sqrt{r} \Norm{\func{\nabla^{g}_{\delta}}{X}}_{g}.
\end{align*}
It follows readily that
\begin{gather*}
\forall\, X \in \Smooth{\chi_{\Theta}}\colon \quad
 \OpNorm{X}_{r \cdot g}
= \sup_{\substack{\delta \in \Der{\Theta} \\ \Norm{\delta}_{\Der{\Theta}} \leq 1}} \Norm{\func{\nabla^{r \cdot g}_{\delta}}{X}}_{r \cdot g}
= \sqrt{r} \sup_{\substack{\delta \in \Der{\Theta} \\ \Norm{\delta}_{\Der{\Theta}} \leq 1}} \Norm{\func{\nabla^{g}_{\delta}}{X}}_{g}
= \sqrt{r} \OpNorm{X}_{g}.
\end{gather*}
We thus obtain $ \OpNorm{\cdot}_{r \cdot g} = \sqrt{r} \OpNorm{\cdot}_{g} $, so
\begin{gather*}
 \D_{r \cdot g}
= \max \big(\Norm{\cdot}_{r \cdot g},\OpNorm{\cdot}_{r \cdot g}\big)
= \max \big(\sqrt{r} \Norm{\cdot}_{g},\sqrt{r} \OpNorm{\cdot}_{g}\big)
= \sqrt{r} \max\big(\Norm{\cdot}_{g},\OpNorm{\cdot}_{g}\big)
= \sqrt{r} \D_{g}.
\end{gather*}
Consequently, $ \D_{g}^{1} = \D_{r \cdot g}^{\sqrt{r}} = \sqrt{r} \cdot \D_{r \cdot g}^{1} $.

Now, we have by Definition~\ref{Minkowski Gauge Functional} that
\begin{align*}
 \Dom{\DM{g}}
&  = \func{\sup}{
 \left\{{X \in \chi_{\Theta}}\,\Big|\,
 {\text{there exists a} ~ t \in \R_{> 0} ~ \text{such that} ~ X \in t \cdot \Cl{\D_{g}^{1}}{\Norm{\cdot}_{g}}}\right\}
 } \\
&   = \func{\sup}{
 \left\{{X \in \chi_{\Theta}}\,\Big|\,
 {
 \text{there exists a} ~ t \in \R_{> 0} ~ \text{such that} ~
 X \in t \cdot \Cl{\sqrt{r} \cdot \D_{r \cdot g}^{1}}{\Norm{\cdot}_{g}}
 }\right\}
 } \\
&   = \func{\sup}{
 \left\{{X \in \chi_{\Theta}}\,\Big|\,
 {
 \text{there exists a} ~ t \in \R_{> 0} ~ \text{such that} ~
 X \in t \sqrt{r} \cdot \Cl{\D_{r \cdot g}^{1}}{\Norm{\cdot}_{g}}
 }\right\}
 } \\
& = \func{\sup}{
 \left\{{X \in \chi_{\Theta}}\,\Big|\,
 {
 \text{there exists a} ~ t \in \R_{> 0} ~ \text{such that} ~
 X \in t \sqrt{r} \cdot \Cl{\D_{r \cdot g}^{1}}{\Norm{\cdot}_{r \cdot g}}
 }\right\}
 } \\
 & \hphantom{=}{} \ \big(\text{as $ \Norm{\cdot}_{g} $ and $ \Norm{\cdot}_{r \cdot g} $ are equivalent}\big) \\
&  = \func{\sup}{
 \left\{{X \in \chi_{\Theta}}\,\Big|\,
 {
 \text{there exists a} ~ t \in \R_{> 0} ~ \text{such that} ~
 X \in t \cdot \Cl{\D_{r \cdot g}^{1}}{\Norm{\cdot}_{r \cdot g}}
 }\right\}
 } \\
&  = \Dom{\DM{r \cdot g}}.
\end{align*}
Next, observe for all $ X \in \Dom{\DM{g}} = \Dom{\DM{r \cdot g}} $ that
\begin{align*}
 \DMArg{g}{X}
& = \func{\inf}{\left\{{t \in \R_{> 0}}\,\Big|\,{X \in t \cdot \Cl{\D_{g}^{1}}{\Norm{\cdot}_{g}}}\right\}} \\
& = \func{\inf}{\left\{{t \in \R_{> 0}}\,\Big|\,{X \in t \cdot \Cl{\sqrt{r} \cdot \D_{r \cdot g}^{1}}{\Norm{\cdot}_{g}}}\right\}} \\
& = \func{\inf}{\left\{{t \in \R_{> 0}}\,\Big|\,{X \in t \sqrt{r} \cdot \Cl{\D_{r \cdot g}^{1}}{\Norm{\cdot}_{g}}}\right\}} \\
& = \func{\inf}{\left\{{t \in \R_{> 0}}\,\Big|\,{X \in t \sqrt{r} \cdot \Cl{\D_{r \cdot g}^{1}}{\Norm{\cdot}_{r \cdot g}}}\right\}} \quad
 \big(\text{as $ \Norm{\cdot}_{g} $ and $ \Norm{\cdot}_{r \cdot g} $ are equivalent}\big) \\
& = \frac{1}{\sqrt{r}} \func{\inf}{\left\{{t \in \R_{> 0}}\,\Big|\,{X \in t \cdot \Cl{\D_{r \cdot g}^{1}}{\Norm{\cdot}_{r \cdot g}}}\right\}} \\
& = \frac{1}{\sqrt{r}} \DMArg{r \cdot g}{X}.
\end{align*}
As $ X \in \Dom{\DM{g}} = \Dom{\DM{r \cdot g}} $ is arbitrary, we obtain $ \DM{r \cdot g} = \sqrt{r} \DM{g} $.

Similarly, $ \Dom{\DM{g}} = \Dom{\DM{s \cdot g}} $ and $ \DM{s \cdot g} = \sqrt{s} \DM{g} $, so
\begin{gather*}
\Dom{\DM{r \cdot g}} = \Dom{\DM{s \cdot g}}
\qquad \text{and} \qquad
\dfrac{1}{\sqrt{r}} \DM{r \cdot g} = \dfrac{1}{\sqrt{s}} \DM{s \cdot g}
\end{gather*}
as required.
\end{proof}

We can now verify without difficulty that $ \Pair{\mathscr{L}}{\mathscr{D}} $ is a full quantum isometry from $ \Omega_{r} $ to $ \Omega_{s} $, where
\begin{gather*}
\mathscr{L} = \Id_{A_{\Theta} \to A_{\Theta}} \qquad \text{and} \qquad \mathscr{D} = \sqrt{\frac{r}{s}} \Id_{\chi_{\Theta} \to \chi_{\Theta}}.
\end{gather*}
However, in order to give the reader an appreciation for the sophistication involved when \smash{trying} to resolve more difficult propinquity problems (see the conclusion below), we will prove Theorem~\ref{Second Main Theorem} directly.

\begin{proof}[Proof of Theorem~\ref{Second Main Theorem}]
By Lemma~\ref{The Effect of Scaling Riemannian Metrics on D-Norms}, we may define a bijection $ \beta_{r,s}\colon \DM{r \cdot g}^{1} \to \DM{s \cdot g}^{1} $ by
\begin{gather*}
\forall\, X \in \DM{r \cdot g}^{1}\colon \quad
\func{\beta_{r,s}}{X} \df \sqrt{\frac{r}{s}} \cdot X.
\end{gather*}
It follows that
\begin{gather*}
 \gamma_{r,s}
\df \big( \Omega_{r},\Omega_{s},A_{\Theta},1_{A_{\Theta}},\Id_{A_{\Theta} \to A_{\Theta}},\Id_{A_{\Theta} \to A_{\Theta}},
 \DM{r \cdot g}^{1},\Id_{\DM{r \cdot g}^{1} \to \DM{r \cdot g}^{1}},\beta_{r,s}\big)
\end{gather*}
is an $ \Trip{F}{G}{H} $-modular bridge, whose associated numerical quantities we now seek to compute.

To perform these computations, first note that $ \bn_{\gamma_{r,s}} $ is a seminorm on $ A_{\Theta} \oplus A_{\Theta} $ such that
\begin{align*}
\forall\, a_{1},a_{2} \in A_{\Theta}\colon \quad
 \bnArg{\gamma_{r,s}}{a_{1}}{a_{2}}& = \Norm{\func{\Id_{A_{\Theta} \to A_{\Theta}}}{a_{1}} 1_{A_{\Theta}} - 1_{A_{\Theta}} \func{\Id_{A_{\Theta} \to A_{\Theta}}}{a_{2}}} _{A_{\Theta}}\\
 & = \Norm{a_{1} - a_{2}}_{A_{\Theta}}.
\end{align*}
We can then make the following observations regarding the basic reach, height and imprint of~$ \gamma_{r,s} $:
\begin{itemize}\itemsep=0pt
\item The basic reach of $ \gamma_{r,s} $ is $ 0 $:
\begin{align*}
 \func{\rho_{\flat}}{\gamma_{r,s}}
& = \func{\max}{
 \adjustlimits \sup_{\substack{a_{1} \in \Dom{\L} \\ \func{\L}{a_{1}} \leq 1}}
 \inf_{\substack{a_{2} \in \Dom{\L} \\ \func{\L}{a_{2}} \leq 1}}
 \Norm{a_{1} - a_{2}}_{A_{\Theta}},
 \adjustlimits \sup_{\substack{a_{2} \in \Dom{\L} \\ \func{\L}{a_{2}} \leq 1}}
 \inf_{\substack{a_{1} \in \Dom{\L} \\ \func{\L}{a_{1}} \leq 1}}
 \Norm{a_{1} - a_{2}}_{A_{\Theta}}
 } \\
& = \func{\max}{
 \sup_{\substack{a_{1} \in \Dom{\L} \\ \func{\L}{a_{1}} \leq 1}} 0,
 \sup_{\substack{a_{2} \in \Dom{\L} \\ \func{\L}{a_{2}} \leq 1}} 0
 } = \func{\max}{0,0} = 0.
\end{align*}

\item The height of $ \gamma_{r,s} $ is $ 0 $: As $ \State{A_{\Theta}|1_{A_{\Theta}}} = \State{A_{\Theta}} $, we have
\begin{align*}
 \func{\varsigma}{\gamma_{r,s}}
& = \func{\max}{
 \begin{matrix}
 \Haus{\mk_{\L}}{\State{A_{\Theta}}}{\Set{\phi \circ \Id_{A_{\Theta} \to A_{\Theta}}}{\phi \in \State{A_{\Theta}}}} \\
 \Haus{\mk_{\L}}{\State{A_{\Theta}}}{\Set{\psi \circ \Id_{A_{\Theta} \to A_{\Theta}}}{\psi \in \State{A_{\Theta}}}}
 \end{matrix}
 } \\
& = \max\big(\Haus{\mk_{\L}}{\State{A_{\Theta}}}{\State{A_{\Theta}}},\Haus{\mk_{\L}}{\State{A_{\Theta}}}{\State{A_{\Theta}}}\big) \\
& = \func{\max}{0,0} = 0.
\end{align*}

\item The imprint of $ \gamma_{r,s} $ is $ 0 $:
\begin{align*}
 \func{\varpi}{\gamma_{r,s}}
& = \max\big(
\mathsf{Haus}_{\k_{\Omega_{r}}}\big(\operatorname{Range}\big(\Id_{\DM{r \cdot g}^{1} \to \DM{r \cdot g}^{1}}\big),\DM{r \cdot g}^{1}\big),\\
& \hphantom{=\max\big(}{}
 \Haus{\k_{\Omega_{s}}}{\Range{\beta_{r,s}}}{\DM{s \cdot g}^{1}}
 \big) \\
& = \max\big(
 \Haus{\k_{\Omega_{r}}}{\DM{r \cdot g}^{1}}{\DM{r \cdot g}^{1}},
 \Haus{\k_{\Omega_{s}}}{\DM{s \cdot g}^{1}}{\DM{s \cdot g}^{1}}
 \big) \\
& = \func{\max}{0,0} = 0.
\end{align*}
\end{itemize}
Hence, $ \func{\lambda}{\gamma_{r,s}} = \func{\rho^{\sharp}}{\gamma_{r,s}} $, i.e., the length of $ \gamma_{r,s} $ equals its modular reach, which we will now prove is also $ 0 $.

Observe for all $ X,Y \in \DM{r \cdot g}^{1} $ that
\begin{align*}
\mathsf{bn}_{\gamma_{r,s}}\big(\Inner{X}{Y}_{r \cdot g},\Inner{\func{\beta_{r,s}}{X}}{\func{\beta_{r,s}}{Y}}_{s \cdot g}\big)
& = \big\|\Inner{X}{Y}_{r \cdot g} - \Inner{\func{\beta_{r,s}}{X}}{\func{\beta_{r,s}}{Y}}_{s \cdot g}\big\|_{A_{\Theta}} \\
& = \Norm{\Inner{X}{Y}_{r \cdot g} - \Inner{\sqrt{\frac{r}{s}} \cdot X}{\sqrt{\frac{r}{s}} \cdot Y}_{s \cdot g}}_{A_{\Theta}} \\
& = \Norm{\Inner{X}{Y}_{r \cdot g} - \frac{r}{s} \cdot \Inner{X}{Y}_{s \cdot g}}_{A_{\Theta}} \\
& = \Norm{r \cdot \Inner{X}{Y}_{g} - \frac{r}{s} \cdot \big(s \cdot \Inner{X}{Y}_{g}\big)}_{A_{\Theta}} \\
& = \big\|r \cdot \Inner{X}{Y}_{g} - r \cdot \Inner{X}{Y}_{g}\big\|_{A_{\Theta}} = 0.
\end{align*}
As the argument is symmetric in $ X $ and $ Y $, we also have for all $ X,Y \in \DM{r \cdot g}^{1} $ that
\begin{gather*}
\mathsf{bn}_{\gamma_{r,s}}\big(\Inner{Y}{X}_{r \cdot g},\Inner{\func{\beta_{r,s}}{Y}}{\func{\beta_{r,s}}{X}}_{s \cdot g}\big) = 0,
\end{gather*}
so
\begin{gather*}
\mathsf{dn}_{\gamma_{r,s}}\big(\func{\Id_{\DM{r \cdot g}^{1} \to \DM{r \cdot g}^{1}}}{X},\func{\beta_{r,s}}{X}\big) = \dnArg{\gamma_{r,s}}{X}{\func{\beta_{r,s}}{X}} \\
\qquad = \func{\sup}{
 \Set{
 \begin{matrix}
\mathsf{bn}_{\gamma_{r,s}}
 \big(\big\langle X\,|\, \func{\Id_{\DM{r \cdot g}^{1} \to \DM{r \cdot g}^{1}}}{Y}\big\rangle_{r \cdot g},
 \Inner{\func{\beta_{r,s}}{X}}{\func{\beta_{r,s}}{Y}}_{s \cdot g}\big) \\
\mathsf{bn}_{\gamma_{r,s}}
\big(\big\langle\func{\Id_{\DM{r \cdot g}^{1} \to \DM{r \cdot g}^{1}}}{Y}\,|\,X\big\rangle_{r \cdot g},
 \Inner{\func{\beta_{r,s}}{Y}}{\func{\beta_{r,s}}{X}}_{s \cdot g}\big)
 \end{matrix}
 }
 {Y \in \DM{r \cdot g}^{1}}
 } \\
\qquad{} = \func{\sup}{
 \Set{
 \begin{matrix}
\mathsf{bn}_{\gamma_{r,s}}\big(\Inner{X}{Y}_{r \cdot g},\Inner{\func{\beta_{r,s}}{X}}{\func{\beta_{r,s}}{Y}}_{s \cdot g}\big) \\
\mathsf{bn}_{\gamma_{r,s}}\big(\Inner{Y}{X}_{r \cdot g},\Inner{\func{\beta_{r,s}}{Y}}{\func{\beta_{r,s}}{X}}_{s \cdot g}\big)
 \end{matrix}
 }
 {Y \in \DM{r \cdot g}^{1}}
 } = 0
\end{gather*}
for all $ X \in \DM{r \cdot g}^{1} $. Therefore,
\begin{align*}
 \ModProp{F}{G}{H}{\Omega_{r}}{\Omega_{s}}
& \leq \func{\lambda}{\gamma_{r,s}} = \func{\rho^{\sharp}}{\gamma_{r,s}} \\
& = \sup\big(
 \big\{ \mathsf{dn}_{\gamma_{r,s}}\big(\func{\Id_{\DM{r \cdot g}^{1} \to \DM{r \cdot g}^{1}}}{X},\func{\beta_{r,s}}{X}\big) \,|\, {X \in \DM{r \cdot g}^{1}} \big\}\big) = 0.
\end{align*}
This completes the proof.
\end{proof}

\section{Conclusion}\label{section6}

The only property of Levi-Civita connections used in this paper is metric compatibility. We could have allowed our $ \D $ norms to depend not only on the choice of a Riemannian metric, but also on the choice of a connection that is not necessarily torsion-free. However, this would introduce an extra degree of variability that could unnecessarily complicate the study of convergence questions in modular propinquity.

Many unanswered questions remain. The four most important ones (in our opinion) are the following:
\begin{itemize}\itemsep=0pt
\item Is $ \D_{g} $ lower-semicontinuous on $ \Smooth{\chi_{\Theta}} $ for any Riemannian metric $ g $? If this is the case, then (4) of Lemma~\ref{Some Facts About Minkowski Gauge Functionals} says that $ \DM{g} $ extends $ \D_{g} $, which would render $ \DM{g} $ more manageable to deal with.
\item Can we find a topology on $ \func{M_{n}}{A_{\Theta}} $ so that if $ h \to g $ in the space of Riemannian metrics with respect to this topology, then
\begin{gather*}
\ModProp{F}{G}{H}{\big(\chi_{\Theta},\Inn_{g},\DM{g},A_{\Theta},\L\big)}{\big(\chi_{\Theta},\Inn_{h},\DM{h},A_{\Theta},\L\big)} \to 0?
\end{gather*}
This question can be posed so straightforwardly because we consider only Levi-Civita connections.

\item Can we incorporate the Riemannian curvature operator \cite[Definition~3.1]{Ro} into the current work?

\item Although Rosenberg considers only free modules in \cite{Ro}, can we extend our results to non-free modules? In~\cite{L5}, Latr\'emoli\`ere establishes a technical propinquity result for metrized quantum vector bundles whose underlying modules are non-free Heisenberg modules over quantum $ 2 $-tori, but it is not clear at this time how his result may be extended to non-free Hilbert modules over quantum tori of arbitrary dimension.
\end{itemize}

\subsection*{Acknowledgments}

The author wishes to thank Konrad Aguilar and Fr\'ed\'eric Latr\'emoli\`ere for their generous help on the Gromov--Hausdorff propinquity. The author also wishes to thank Albert Sheu for his help in understanding Rosenberg's work on Levi-Civita connections. The warmest gratitude is reserved for the referees who offered valuable advice on how to significantly improve the quality of this paper.

\pdfbookmark[1]{References}{ref}
\LastPageEnding

\end{document}